\newcommand{\R}{\mathds R}
\newcommand{\dd}{\mathrm d}
\newcommand{\chilow}[1]{\chi_{\lower2pt\hbox{$\scriptstyle#1$}}}
\DeclareMathOperator{\Der}{Der}
\DeclareMathOperator{\DA}{DA}
\DeclareMathOperator{\BV}{BV}
\DeclareMathOperator{\NBV}{NBV}
\title[Extension of $c_0(I)$-valued operators]{Extension of $\mathbf{c_0(I)}$-valued operators on spaces of continuous functions on compact lines}
\author{Victor dos Santos Ronchim}
\address{(V. S. Ronchim) Departamento de Matem\'atica,\hfill\break\indent Universidade de S\~ao Paulo, Brazil}
\email{vronchim@ime.usp.br}
\author{Daniel V. Tausk}
\address{(D. V. Tausk) Departamento de Matem\'atica,\hfill\break\indent Universidade de S\~ao Paulo, Brazil}
\email{tausk@ime.usp.br}
\urladdr{http://www.ime.usp.br/\~{}tausk}
\subjclass[2020]{46B26,46E15,54F05}
\keywords{Banach spaces of continuous functions; extensions of bounded operators; compact lines}
\date{May 28th, 2022}
\begin{document}

\theoremstyle{plain}\newtheorem{teo}{Theorem}[section]
\theoremstyle{plain}\newtheorem{prop}[teo]{Proposition}
\theoremstyle{plain}\newtheorem{lem}[teo]{Lemma}
\theoremstyle{plain}\newtheorem{cor}[teo]{Corollary}
\theoremstyle{definition}\newtheorem{defin}[teo]{Definition}
\theoremstyle{remark}\newtheorem{rem}[teo]{Remark}
\theoremstyle{plain} \newtheorem{assum}[teo]{Assumption}
\theoremstyle{definition}\newtheorem{example}[teo]{Example}

\begin{abstract}
We investigate the problem of existence of a bounded extension to $C(K)$ of a bounded $c_0(I)$-valued operator $T$ defined on the subalgebra of $C(K)$
induced by a continuous increasing surjection $\phi:K\to L$, where $K$ and $L$ are compact lines. Generalizations of some of the results of \cite{CTlines} about extension of $c_0$-valued operators are obtained. For instance, we prove that when a bounded extension of $T$ exists then an extension can be obtained with norm at most twice the norm of $T$. Moreover, the class of compact lines $L$ for which the $c_0$-extension property is equivalent to the $c_0(I)$-extension property for any continuous increasing surjection $\phi:K\to L$ is studied.
\end{abstract}

\maketitle

\begin{section}{Introduction}

The problem of extension of bounded linear operators is a classical problem in the theory of Banach spaces. It includes, as a particular case, the problem of complementation
of closed subspaces since a closed subspace $Y$ of a Banach space $X$ is complemented in $X$ if and only if the identity operator of $Y$ admits a bounded extension to $X$. The Banach spaces $Z$ for which every bounded operator $T:Y\to Z$ admits a bounded extension to $X$ (for all $X$ and $Y$) are known as {\em injective\/} Banach spaces. For example, the space $\ell_\infty$ is {\em isometrically injective}, i.e., every bounded linear operator $T:Y\to\ell_\infty$ admits an extension to $X$ having the same norm as $T$. More generally, the space $\ell_\infty(I)$ of bounded families of real numbers indexed by an arbitrary set $I$ (endowed with the sup norm) is isometrically injective. This is a simple consequence of the Hahn--Banach Theorem as bounded operators $T:X\to\ell_\infty(I)$ are in one-to-one correspondence with bounded families $(\alpha_i)_{i\in I}$ in $X^*$, where $\alpha_i$ is the $i$-th coordinate functional of $T$; in addition, $\Vert T\Vert=\sup_{i\in I}\Vert\alpha_i\Vert$.

The subspace $c_0$ of $\ell_\infty$ consisting of sequences that converge to zero is not injective as the identity of $c_0$ does not extend \cite{Phillips} to $\ell_\infty$. However, the classical Theorem of Sobczyk \cite{Sobczyk} states that $c_0$ is {\em separably injective}, i.e., every bounded operator $T:Y\to c_0$ admits a bounded extension to $X$ for any closed subspace $Y$ of a separable Banach space $X$. Moreover, the extension can be chosen with norm bounded by twice the norm of $T$. It is easy to see that the statement of Sobczyk's Theorem remains valid when $c_0$ is replaced by its nonseparable version $c_0(I)$. Recall that for an arbitrary set $I$, one defines the space $c_0(I)$ to be the subspace of $\ell_\infty(I)$ consisting of families $(a_i)_{i\in I}$ such that $\lim_{i\in I}a_i=0$, i.e., such that the set $\big\{i\in I:\vert a_i\vert\ge\varepsilon\big\}$ is finite for every $\varepsilon>0$.

Generalizations of Sobczyk's Theorem is a widely studied topic \cite{ArgyrosLondon,Caudia,CTJFA,CTResults,CT,CTlines,DrPlebanek,Eloi,Molto,Patterson}. In \cite{CT} the authors introduced the following definition: a closed subspace $Y$ of a Banach space $X$ is said to have the {\em $c_0$-extension property\/} in $X$ if every bounded operator $T:Y\to c_0$ admits a bounded extension to $X$. Sobczyk's Theorem then states that every closed subspace $Y$ of a separable Banach space $X$ has the $c_0$-extension property in $X$ and one can investigate generalizations of Sobczyk's Theorem by studying conditions under which $Y$ has the $c_0$-extension property in $X$ for a nonseparable Banach space $X$. Clearly, bounded operators $T:X\to c_0$ are in one-to-one correspondence with sequences $(\alpha_n)_{n\ge1}$ in $X^*$ that are {\em weak*-null}, i.e., sequences that converge to zero in the weak*-topology. Thus the problem of whether a bounded operator $T:Y\to c_0$ admits a bounded extension to $X$ is equivalent to the problem of whether a weak*-null sequence in $Y^*$ admits a (term by term) extension to a weak*-null sequence in $X^*$.

Another natural direction to look for generalizations of Sobczyk's Theorem is to replace $c_0$ by $c_0(I)$ and the notion of $c_0$-extension property by the notion of $c_0(I)$-extension property. For instance, in \cite{ArgyrosLondon} the authors studied the problem of complementation of isomorphic copies of the space $c_0(I)$ in a Banach space $X$. This is a particular case of the more general problem of studying closed subspaces of $X$ with the $c_0(I)$-extension property in $X$, since an isomorphic copy $Y$ of $c_0(I)$ in $X$ has the $c_0(I)$-extension property in $X$ if and only if it is complemented in $X$. We note that bounded operators $T:X\to c_0(I)$ are in one-to-one correspondence with families $(\alpha_i)_{i\in I}$ in $X^*$ that are {\em weak*-null\/} in the sense that $\lim_{i\in I}\alpha_i(x)=0$ for all $x\in X$.

The problem of characterizing pairs $(X,Y)$ such that $Y$ has the $c_0$-ex\-tension property or the $c_0(I)$-extension property in $X$ is too general to be handled and thus it is necessary to work with particular classes of Banach spaces $X$ and closed subspaces $Y$. An important class in which this problem has been studied consists of spaces $C(K)$ of real-valued continuous functions on a compact Hausdorff space $K$ endowed with the sup norm. Since $C(K)$ is a Banach algebra with unity, it is natural to consider subspaces $Y$ of $C(K)$ that are Banach subalgebras with unity. Note that it follows from Stone--Weierstrass Theorem that such subalgebras are precisely the images of composition operators $\phi^*:C(L)\to C(K)$, where $L$ is a compact Hausdorff space and $\phi:K\to L$ is a surjective continuous map. As usual, the {\em composition operator\/} $\phi^*$ associated to a continuous map $\phi$ is the operator defined by $\phi^*(f)=f\circ\phi$, for all $f$. In this context, we say that $\phi:K\to L$ has the {\em $c_0$-extension property\/} (resp., {\em $c_0(I)$-extension property}) if the image of $\phi^*$ has the $c_0$-extension property (resp., $c_0(I)$-extension property) in $C(K)$.

In \cite{CTlines} the authors obtained a characterization of the continuous surjections $\phi:K\to L$ with the $c_0$-extension property when the spaces $K$ and $L$ are compact lines and $\phi$ is increasing. By a {\em compact line\/} we mean a totally ordered space which is compact in the order topology. The case where $\phi$ is not increasing is much harder and the problem was solved in \cite{CT} only when $L$ is countable. The main goal of the present paper is to generalize some of the results of \cite{CTlines} to the case when the $c_0$-extension property is replaced with the $c_0(I)$-extension property.

In what follows we give a bird's eye view of the contents of the paper and we introduce some of the terminology that will be used later on. Recall that if $K$ is a compact Hausdorff space then the dual space of $C(K)$ is isometrically identified with the space $M(K)$ of finite regular countably-additive signed Borel measures on $K$ endowed with the total variation norm. When $K$ is a compact line, a more useful representation of bounded linear functionals on $C(K)$ can be obtained as $M(K)$ can be identified with the space $\NBV(K)$ of right-continuous maps $F:K\to\R$ of bounded variation (see Proposition~\ref{thm:propNBVK} for details). Bounded operators $T:C(K)\to c_0(I)$ are then in one-to-one correspondence with families $(F_i)_{i\in I}$ in $\NBV(K)$ that are {\em weak*-null\/} in the sense that the corresponding family in $C(K)^*$ is weak*-null. Given a continuous increasing surjection $\phi:K\to L$ between compact lines $K$ and $L$ one can then ask for a characterization of the weak*-null families $(F_i)_{i\in I}$ in $\NBV(L)$ for which the corresponding bounded operator $T:C(L)\to c_0(I)$ admits a bounded extension to $C(K)$; here we identify $C(L)$ with its image under the isometric embedding $\phi^*:C(L)\to C(K)$, so that by a {\em bounded extension\/} of $T:C(L)\to c_0(I)$ we mean a bounded operator $S:C(K)\to c_0(I)$ with $S\circ\phi^*=T$. When the bounded operator $T:C(L)\to c_0(I)$ associated to a weak*-null family $(F_i)_{i\in I}$ in $\NBV(L)$ admits a bounded extension to $C(K)$ we will say that the family $(F_i)_{i\in I}$ {\em extends through $\phi$}.

In \cite{CTlines} the authors present a simple characterization of weak*-null families $(F_i)_{i\in I}$ that extend through a continuous increasing surjection $\phi:K\to L$ in the case where $I$ is countable. In this characterization, a central role is played by the subset $Q(\phi)$ of $L$ consisting of points with nontrivial preimage:
\[Q(\phi)=\big\{t\in L:\text{$\phi^{-1}(t)$ has more than one point}\big\}.\]
We observe that the set $Q(\phi)$ also appeared in \cite[Lemma~2.7]{KK} in a characterization of continuous increasing surjections $\phi:K\to L$ such that
the image of $\phi^*$ is complemented in $C(K)$.
The characterization appearing in \cite[Proposition~2.5]{CTlines} is the following: a weak*-null sequence $(F_n)_{n\ge1}$ in $\NBV(L)$ extends through $\phi:K\to L$ if and only if the set
\begin{equation}\label{eq:Fnnaotendezero}
\big\{t\in Q(\phi):\big(F_n(t)\big)_{n\ge1}\not\in c_0\big\}
\end{equation}
is countable. Using this result, a characterization of continuous increasing surjections $\phi$ with the $c_0$-extension property was obtained in \cite[Theorem~2.6]{CTlines}.

Characterizing weak*-null families $(F_i)_{i\in I}$ that extend through a continuous increasing surjection $\phi:K\to L$ when $I$ is uncountable is much more challenging and a simple adaptation of \cite[Proposition~2.5]{CTlines} does not seem to be possible. A first step towards the solution of this problem is to translate extendibility of $(F_i)_{i\in I}$ through $\phi$ into a condition that involves the map $\phi$ only through the set $Q(\phi)$. This translation is attained using the notion of a family
of real-valued maps of type $c_0\ell_1$ over a set that is introduced and studied in Section~\ref{sec:c0ell1}. More precisely, we prove that a weak*-null family
$(F_i)_{i\in I}$ in $\NBV(L)$ extends through $\phi:K\to L$ if and only if $(F_i)_{i\in I}$ is of type $c_0\ell_1$ over the set $Q(\phi)$. We then proceed to study necessary and sufficient conditions for a family of real-valued maps $(F_i)_{i\in I}$ to be of type $c_0\ell_1$ over a set $Q$ and how this property is related to the set
\begin{equation}\label{eq:QFinaotendezero}
\big\{t\in Q:\big(F_i(t)\big)_{i\in I}\not\in c_0(I)\big\},
\end{equation}
which is the analogue of \eqref{eq:Fnnaotendezero} in this context. This theory is used for instance to establish that if a bounded operator $T:C(L)\to c_0(I)$ admits a bounded extension to $C(K)$ then an extension with norm bounded by $2\Vert T\Vert$ exists. This was already proven for countable $I$ in \cite[Proposition~2.5]{CTlines} using Sobczyk's Theorem, but the general case considered in this paper is much harder and require new techniques. It is interesting to observe that the extension constant $2$ often appears in the theory of extension of $c_0$-valued operators (see for instance \cite{Caudia}), though \cite[Theorem~1.1]{ArgyrosLondon} suggests that for $c_0(I)$-valued operators the constant might in general depend on the cardinality of the set $I$. The theory studied in Section~\ref{sec:c0ell1} also leads to a nice characterization of families $(F_i)_{i\in I}$ in $\NBV(K)$ that are weak*-null and this is presented in Subsection~\ref{sub:weakstarnull}. The problem of characterizing weak*-null families is interesting on its own and it was studied in \cite{Hognas} in the particular case when $K=[0,1]$.

In Section~\ref{sec:separablydetermined} we study compact lines $L$ for which one can check that a weak*-null family $(F_i)_{i\in I}$ in $\NBV(L)$ is of type $c_0\ell_1$ over a subset $Q$ of $L$ by only looking at the intersection of \eqref{eq:QFinaotendezero} with closed separable subsets of $L$ (see Definition~\ref{thm:defseparablydetermined} for details); we call such compact lines {\em separably determined}. It turns out that a compact line $L$ is separably determined if and only if the $c_0$-extension property is equivalent to the $c_0(I)$-extension property for every continuous increasing surjection $\phi:K\to L$ and every compact line $K$. It is shown
that the class of separably determined compact lines is large and closed under important constructions, such as finite products endowed with the lexicographic order. However, in Section~\ref{sec:notsepdet} we prove that the $\omega$-th power of an uncountable compact line, ordered lexicographically, is not separably determined. The proof of this result leads naturally to the topological notion of countably separating families (Definition~\ref{thm:defcountsep}) which is shown to have a deeper relation with families of type $c_0\ell_1$ in Subsection~\ref{sub:countsepc0ell1}. This result also provides an example of a closed subspace of a $C(K)$ space which has the $c_0$-extension property but not the $c_0(I)$-extension property with $I$ of cardinality $\omega_1$. An example of this type was already known \cite[Theorem~1.1~(b)]{ArgyrosLondon}, but for $I$ of much larger cardinality (see Remark~\ref{thm:remsmallercardinal}).

\end{section}

\begin{section}{Operator extension and families of type $c_0\ell_1$}
\label{sec:c0ell1}

We start by introducing some terminology and by recalling basic facts about compact lines. We call a point $t$ of a compact line $K$ {\em right isolated\/} (resp., {\em left isolated}) if either $t=\max K$ (resp., $t=\min K$) or there exists
$s\in K$ with $s>t$ and $\left]t,s\right[=\emptyset$ (resp., with $s<t$ and $\left]s,t\right[=\emptyset$). The set of right isolated (resp., left isolated) points of
$K$ is denoted by $K^+$ (resp., $K^-$) and the points of $K$ not in $K^+\cup K^-$ are called {\em internal}.

Given an arbitrary map $F:K\to\R$ defined on a compact line $K$, we set $V(F;P)=\sum_{i=0}^{n-1}\vert F(t_{i+1})-F(t_i)\vert$ for a finite subset
$P=\{t_0,t_1,\ldots,t_n\}$ of $K$ with $t_0<t_1<\cdots<t_n$. The {\em total variation\/} $V(F)\in[0,+\infty]$ of $F$ is defined as the supremum of $V(F;P)$ with $P$ ranging
over all finite subsets of $K$ and we set $\Vert F\Vert_{\BV}=\vert F(\min K)\vert+V(F)$. The map $F$ is said to be of {\em bounded variation\/} if $V(F)<+\infty$. The space
$\BV(K)$ of all maps $F:K\to\R$ of bounded variation is a Banach space endowed with the norm $\Vert\cdot\Vert_{\BV}$. A map $F:K\to\R$ of bounded variation
admits a finite right-limit $\lim_{s\to t^+}F(s)$ (resp., a finite left-limit $\lim_{s\to t^-}F(s)$) at every point $t\in K$ that is not right-isolated (resp., not left-isolated). We denote by $\NBV(K)$ the closed subspace of $\BV(K)$ consisting of maps that are right-continuous. The following representation theorem for $M(K)\equiv C(K)^*$
will be used throughout the article.
\begin{prop}\label{thm:propNBVK}
Let $K$ be a compact line. For every $\mu\in M(K)$, the map $F_\mu:K\to\R$ defined by $F_\mu(t)=\mu\big([\min K,t]\big)$ for all $t\in K$
is in $\NBV(K)$ and the operator $M(K)\ni\mu\mapsto F_\mu\in\NBV(K)$ is a linear isometry.
\end{prop}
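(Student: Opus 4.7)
The plan is to verify three things: (i) $F_\mu \in \NBV(K)$, (ii) the map $\mu \mapsto F_\mu$ is linear, and (iii) $\Vert F_\mu\Vert_{\BV} = \Vert\mu\Vert_{M(K)}$. Everything rests on the identity $F_\mu(t') - F_\mu(t) = \mu(\left]t,t'\right])$ for $t < t'$ in $K$, together with $F_\mu(\min K) = \mu(\{\min K\})$, which shows that increments of $F_\mu$ record $\mu$-measures of half-open intervals. Linearity in (ii) is immediate from additivity of the assignment $A \mapsto \mu(A)$ for each fixed Borel $A$.

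For (i), bounded variation is immediate: for any finite subset $\{t_0 < \cdots < t_n\}$, the sets $\left]t_i, t_{i+1}\right]$ are pairwise disjoint Borel subsets of $K$, so the corresponding sum is $\sum_i \vert\mu(\left]t_i, t_{i+1}\right])\vert \leq \vert\mu\vert(K)$, whence $V(F_\mu) \leq \vert\mu\vert(K) < \infty$. For right-continuity at a non-right-isolated $t$, given $\varepsilon > 0$ outer regularity of $\vert\mu\vert$ yields an open $U \supset [\min K, t]$ with $\vert\mu\vert(U \setminus [\min K, t]) < \varepsilon$; a basic open interval $\left]r, s\right[ \subset U$ containing $t$ gives $[t, s) \subset U$, so $\vert F_\mu(u) - F_\mu(t)\vert = \vert\mu(\left]t, u\right])\vert < \varepsilon$ for all $u \in [t, s)$. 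At right-isolated points right-continuity is automatic.

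For (iii), the inequality $\Vert F_\mu\Vert_{\BV} \leq \Vert\mu\Vert_{M(K)}$ follows by the slightly sharper estimate $V(F_\mu) \leq \vert\mu\vert(K \setminus \{\min K\})$ (since $\bigcup_i \left]t_i, t_{i+1}\right] \subset K \setminus \{\min K\}$), combined with $\vert F_\mu(\min K)\vert = \vert\mu(\{\min K\})\vert$. The reverse inequality is the main point. Fix $\varepsilon > 0$ and a Hahn decomposition $K = A_+ \sqcup A_-$ for $\mu$, so $\vert\mu\vert(K) = \mu(A_+) - \mu(A_-)$. The plan is to approximate $A_+ \setminus \{\min K\}$ and $A_- \setminus \{\min K\}$ in $\vert\mu\vert$-symmetric difference by disjoint finite unions of half-open intervals $\left]s, t\right]$, then assemble all endpoints of those intervals together with $\min K$ and $\max K$ into a partition $P = \{\min K = t_0 < \cdots < t_n = \max K\}$. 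Passing to a common refinement only enlarges $V(F_\mu; P)$, and the refined sum $\vert F_\mu(\min K)\vert + V(F_\mu; P)$ is within $O(\varepsilon)$ of $\vert\mu\vert(K)$.

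The main obstacle lies in this final approximation step: showing that the algebra of finite unions of half-open intervals is $\vert\mu\vert$-dense in the full Borel $\sigma$-algebra of $K$. For $K = [0,1]$ this is a textbook consequence of Caratheodory-style arguments, but for a general compact line it demands a little care because $K$ need not be metrizable or even first countable. The way to handle it is first to use regularity of $\vert\mu\vert$ to reduce to approximating open subsets of $K$ from within, and then to exploit the fact that every open set in the order topology of $K$ is a union of basic order-open intervals, each of which is approximable by a half-open interval via countable additivity of $\vert\mu\vert$ along a cofinal sequence of points approaching the right endpoint. Compactness of $K$ then lets one pass from countable unions of approximating half-open intervals to finite unions, up to an additional $\vert\mu\vert$-error as small as desired, which closes the argument.
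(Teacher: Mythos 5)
The paper does not actually prove this proposition; it simply cites \cite[Lemma~3.1]{CTJFA}. Your argument is therefore necessarily a different route in the sense of being self-contained, and its overall architecture is the standard one and is sound: the identity $F_\mu(t')-F_\mu(t)=\mu\big(\left]t,t'\right]\big)$, the upper bound $V(F_\mu)\le\vert\mu\vert\big(K\setminus\{\min K\}\big)$ from disjointness of the increments, right-continuity from outer regularity, and the lower bound via a Hahn decomposition together with approximation of $A_\pm\setminus\{\min K\}$ in $\vert\mu\vert$-measure by finite disjoint unions of half-open intervals whose endpoints are then fed into a single partition $P$. (Two pieces of bookkeeping you gloss over but which are routine: you need $E_+$ and $E_-$ to be disjoint from \emph{each other} before comparing $\mu(E_+)-\mu(E_-)$ with $V(F_\mu;P)$, which costs another $O(\varepsilon)$ since $A_+\cap A_-=\emptyset$; and the complementation/intersection operations stay inside the algebra of finite unions of half-open intervals only modulo the singleton $\{\min K\}$, which you already treat separately.)

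The one step that is wrong as stated is the justification of the inner approximation of a basic interval $\left]a,b\right[$ by half-open intervals ``via countable additivity of $\vert\mu\vert$ along a cofinal sequence of points approaching the right endpoint.'' In a general compact line such a countable cofinal sequence need not exist: in $[0,\omega_1]$ the point $\omega_1$ has uncountable left character, precisely the non-first-countability you yourself flag. The fix is immediate and uses only the regularity you already assume: a compact subset $C$ of the open set $\left]a,b\right[$ has a maximum $v=\max C<b$, so $C\subset\left]a,v\right]\subset\left]a,b\right[$ and inner regularity gives $\vert\mu\vert\big(\left]a,b\right[\big)=\sup_{a<v<b}\vert\mu\vert\big(\left]a,v\right]\big)$ with no sequences needed. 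The same remark applies to your closing sentence: the passage to finite unions should be routed through inner regularity (cover a compact $C\subset U$ with $\vert\mu\vert(U\setminus C)<\varepsilon$ by finitely many basic intervals contained in $U$), not through countability of the cover of $U$, since an open subset of a compact line need not be a countable union of basic intervals. With these substitutions the proof is complete and correct.
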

\begin{proof}
See \cite[Lemma~3.1]{CTJFA}.
\end{proof}

Another basic fact that will be used without further mention is that if $t\in K^+$ is a right-isolated point of a compact line $K$ then $\lim_{i\in I}F_i(t)=0$ for every weak*-null family $(F_i)_{i\in I}$ in $\NBV(K)$. This follows from the observation that the interval $[\min K,t]$ is clopen and therefore its characteristic function $\chilow{[\min K,t]}$ is in $C(K)$.

Given a continuous map $\phi:K\to L$, where $K$ and $L$ are arbitrary compact Hausdorff spaces, it is easily seen that the adjoint of the composition operator $\phi^*:C(L)\to C(K)$ is identified with the push-forward operator $\phi_*:M(K)\to M(L)$ defined by $\phi_*(\mu)(B)=\mu\big(\phi^{-1}[B]\big)$, for all $\mu\in M(K)$ and every Borel subset $B$ of $L$. If $K$ and $L$ are compact lines, the isometry given in Proposition~\ref{thm:propNBVK} yields an identification of $\phi_*:M(K)\to M(L)$ with an operator from $\NBV(K)$ to $\NBV(L)$ (also denoted by $\phi_*$) and if $\phi$ is increasing and surjective such operator $\phi_*:\NBV(K)\to\NBV(L)$ is given by
\begin{equation}\label{eq:defphistarF}
\phi_*(G)(t)=G\big(\!\max\phi^{-1}(t)\big),
\end{equation}
for all $t\in L$ and every $G\in\NBV(K)$. We will also adopt equality \eqref{eq:defphistarF} as the definition of $\phi_*(G)$ for an arbitrary map $G:K\to\R$ (not necessarily in
$\NBV(K)$).

Note that a weak*-null family $(F_i)_{i\in I}$ in $\NBV(L)$ extends through a continuous increasing surjection $\phi:K\to L$ if and only if there exists a weak*-null family $(G_i)_{i\in I}$ in $\NBV(K)$ such that $\phi_*(G_i)=F_i$ for all $i\in I$. Our first lemma establishes the basic relations between a map $G:K\to\R$ and the map $F=\phi_*(G):L\to\R$, allowing us to work on the extendibility problem for weak*-null families in $\NBV(L)$.

\begin{lem}\label{thm:normBVestimaterightcont}
Let $\phi:K\to L$ be a continuous increasing surjection between compact lines, $G:K\to\R$ be a map and set $F=\phi_*(G)$. The following inequalities
hold:
\begin{equation}\label{eq:normaFGBV}
\max\Big\{\Vert F\Vert_{\BV},\sum_{t\in L}V\big(G\vert_{\phi^{-1}(t)}\big)\Big\}\le\Vert G\Vert_{\BV}\le\Vert F\Vert_{\BV}+2\sum_{t\in L}V\big(G\vert_{\phi^{-1}(t)}\big).
\end{equation}
Moreover, if $G$ is of bounded variation, then $G$ is right-continuous if and only if $F$ is right-continuous and $G\vert_{\phi^{-1}(t)}$ is right-continuous for all $t\in L$.
\end{lem}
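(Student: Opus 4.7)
The plan is to derive the three numerical inequalities bundled inside \eqref{eq:normaFGBV} by direct manipulation of partitions of $K$ and $L$, and then handle the right-continuity equivalence by a case analysis according to whether a given point $x\in K$ is the maximum of its fiber $\phi^{-1}(\phi(x))$.

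For the two lower bounds, given any finite partition $P'=\{s_0<\cdots<s_m\}$ of $L$ containing $\min L$, I would consider its lift $u_j=\max\phi^{-1}(s_j)$ to $K$: by \eqref{eq:defphistarF} one has $F(s_j)=G(u_j)$ and by monotonicity $u_0<\cdots<u_m$, so forming the partition $\{\min K,u_0,\ldots,u_m\}$ of $K$ and combining the triangle inequality $|F(\min L)|\le|G(\min K)|+|G(u_0)-G(\min K)|$ with $V(G;\{u_0,\ldots,u_m\})=V(F;P')$ yields $\Vert F\Vert_{\BV}\le\Vert G\Vert_{\BV}$ after supping over $P'$. For the second lower bound, any finite family of partitions $P_t\subset\phi^{-1}(t)$ (with $t$ ranging in a finite subset of $L$) assembles into a single partition of $K$ whose total variation already dominates $\sum_t V(G;P_t)$, yielding $\sum_{t\in L}V(G\vert_{\phi^{-1}(t)})\le V(G)\le\Vert G\Vert_{\BV}$ after taking suprema.

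The main work is the upper bound. Fix a partition $P=\{x_0<\cdots<x_n\}$ of $K$ that contains $\min K$ (refinement can only increase $V(G;P)$), group successive indices by the value of $\phi(x_j)$, and classify the consecutive pairs $(x_j,x_{j+1})$ as \emph{same-fiber} or \emph{jump} pairs. For each jump pair, insert the auxiliary points $\tilde x_j=\max\phi^{-1}(\phi(x_j))$ and $\tilde x_{j+1}=\max\phi^{-1}(\phi(x_{j+1}))$ and split
\[|G(x_{j+1})-G(x_j)|\le|G(x_j)-G(\tilde x_j)|+|G(\tilde x_j)-G(\tilde x_{j+1})|+|G(\tilde x_{j+1})-G(x_{j+1})|.\]
Summing the middle terms over jump pairs reproduces the variation of $F$ on the ordered set of distinct $\phi$-values $\phi(x_j)$, contributing at most $V(F)$. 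The outer terms, together with the same-fiber pair contributions and (for the fiber $\phi^{-1}(\min L)$ containing $\min K$) the discrepancy $|G(\min K)-F(\min L)|$ arising from $|G(\min K)|\le|F(\min L)|+|G(\min K)-F(\min L)|$, must be regrouped fiber-by-fiber. The main obstacle is this bookkeeping: within each fiber $\phi^{-1}(t)$ the collected contribution takes the form $V(G;Q_t)+|G(u_t)-G(y_t)|$ for some monotone partition $Q_t$ of $\phi^{-1}(t)$ ending at $u_t=\max\phi^{-1}(t)$ and some $y_t\in\phi^{-1}(t)$ (the earliest point of $P$ lying in the fiber), so it is bounded by $V(G\vert_{\phi^{-1}(t)})+V(G\vert_{\phi^{-1}(t)})=2V(G\vert_{\phi^{-1}(t)})$, delivering the factor~$2$.

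For the right-continuity equivalence, the forward direction is immediate for each restriction $G\vert_{\phi^{-1}(t)}$, and for $F$ at a non-right-isolated $t\in L$ I would argue that $u_s=\max\phi^{-1}(s)$ decreases to $u_t$ as $s\to t^+$ (any cluster point of $u_s$ would satisfy $\phi(\cdot)=t$ by continuity of $\phi$ yet strictly exceed $u_t$, contradicting the definition of $u_t$), whence $F(s)=G(u_s)\to G(u_t)=F(t)$. For the converse, fix $x\in K$ not right-isolated and let $t=\phi(x)$. If $x<u_t$, then a right-neighborhood of $x$ in $K$ lies in $\phi^{-1}(t)$, so right-continuity of $G\vert_{\phi^{-1}(t)}$ at $x$ settles the matter. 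If $x=u_t$, one first checks that $t$ cannot be right-isolated in $L$ (otherwise the successor of $t$ in $L$ would produce a successor of $x$ in $K$); then for any net $y_n\to x^+$ the points $v_n=\max\phi^{-1}(\phi(y_n))$ also converge to $x$ from above (same compactness argument), and $G(v_n)=F(\phi(y_n))\to F(t)=G(x)$ by right-continuity of $F$ at $t$. Since the right-limit of $G$ at $x$ exists by bounded variation, this forces $\lim_{y\to x^+}G(y)=G(x)$, completing the proof.
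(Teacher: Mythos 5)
Your proof is correct and follows essentially the same route as the paper's: the upper bound is obtained by the same device of routing each inter-fiber jump through the fiber maxima, so that the middle terms assemble into $V(F;\phi[P])$ and the leftover terms are absorbed, fiber by fiber, into twice the fiber variations, while the right-continuity equivalence reduces to the same limit chain $\lim_{u\to\beta_t^+}G(u)=\lim_{s\to t^+}G(\beta_s)=\lim_{s\to t^+}F(s)=F(t)$ at the points $\beta_t=\max\phi^{-1}(t)$. Your explicit verification that $\max\phi^{-1}(s)$ decreases to $\max\phi^{-1}(t)$ as $s\to t^+$ merely fills in a step the paper leaves implicit.
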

\begin{proof}
It is easy to see that $\Vert F\Vert_{\BV}\le\Vert G\Vert_{\BV}$ and $\sum_{t\in L}V\big(G\vert_{\phi^{-1}(t)}\big)\le V(G)$, which implies
the first inequality in \eqref{eq:normaFGBV}. To prove the second inequality in \eqref{eq:normaFGBV}, let $P$ be a finite subset of $K$ and let us estimate $\vert G(\min K)\vert+V(G;P)$. Set $Q=\phi[P]=\{t_0,t_1,\ldots,t_n\}$ with $t_0<t_1<\cdots<t_n$ and write
\[P_i=P\cap\phi^{-1}(t_i),\quad\alpha_i=\min\phi^{-1}(t_i),\quad\beta_i=\max\phi^{-1}(t_i),\]
for $i=0,1,\ldots,n$. Without loss of generality we can assume that $\alpha_i$ and $\beta_i$ are in $P_i$ and that $t_0=\min L$. We have
\[V(G;P)\le\sum_{i=0}^nV\big(G\vert_{\phi^{-1}(t_i)}\big)+\sum_{i=0}^{n-1}\vert G(\alpha_{i+1})-G(\beta_i)\vert\]
and
\begin{align*}
\vert G(\alpha_{i+1})-G(\beta_i)\vert&\le\vert G(\beta_{i+1})-G(\beta_i)\vert+\vert G(\beta_{i+1})-G(\alpha_{i+1})\vert\\
&\le\vert F(t_{i+1})-F(t_i)\vert+V\big(G\vert_{\phi^{-1}(t_{i+1})}\big)
\end{align*}
so that:
\[V(G;P)\le V(F;Q)+V\big(G\vert_{\phi^{-1}(t_0)}\big)+2\sum_{i=1}^nV\big(G\vert_{\phi^{-1}(t_i)}\big).\]
Since $\vert G(\min K)\vert=\vert G(\alpha_0)\vert\le\vert F(\min L)\vert+V\big(G\vert_{\phi^{-1}(t_0)}\big)$, we get
\[\vert G(\min K)\vert+V(G;P)\le\Vert F\Vert_{\BV}+2\sum_{i=0}^nV\big(G\vert_{\phi^{-1}(t_i)}\big),\]
which proves the second inequality in \eqref{eq:normaFGBV}. As for the second part of the statement of the lemma, the only nontrivial fact to be proven
is that if $F$ is right-continuous and $G$ is of bounded variation then $G$ is right-continuous at the point $\beta_t=\max\phi^{-1}(t)$,
for every $t\in L\setminus L^+$. Since $G$ has bounded variation, the right-limit of $G$ at $\beta_t$ exists and we compute as follows:
\[\lim_{u\to\beta_t^+}G(u)=\lim_{s\to t^+}G(\beta_s)=\lim_{s\to t^+}F(s)=F(t)=G(\beta_t).\qedhere\]
\end{proof}

Now we need a criteria for establishing that a family $(G_i)_{i\in I}$ in $\NBV(K)$ is weak*-null when we already know that the family
$(F_i)_{i\in I}$ with $F_i=\phi_*(G_i)$ is weak*-null in $\NBV(L)$. To this aim, given a compact line $K$ and a nonempty closed interval $[t,s]\subset K$,
we define the {\em canonical retraction\/} of $K$ onto $[t,s]$ to be the retraction $R:K\to[t,s]$ that maps $[\min K,t]$ to $t$ and $[s,\max K]$ to $s$. Clearly, for any real-valued map $F$ on $K$, we have that $R_*(F)$ agrees with $F$ on $\left[t,s\right[$ and that $R_*(F)(s)$ is equal to $F(\max K)$.
\begin{lem}\label{thm:weaknullGicrit}
Let $\phi:K\to L$ be a continuous increasing surjection between compact lines and $(G_i)_{i\in I}$ be a family in $\NBV(K)$. For each $t\in L$, denote
by $R_t:K\to\phi^{-1}(t)$ the canonical retraction. We have that $(G_i)_{i\in I}$ is weak*-null in $\NBV(K)$ if and only if $(G_i)_{i\in I}$ is bounded, $\big(\phi_*(G_i)\big)_{i\in I}$ is weak*-null in $\NBV(L)$ and $\big((R_t)_*(G_i)\big)_{i\in I}$ is weak*-null in $\NBV\!\big(\phi^{-1}(t)\big)$ for all $t\in Q(\phi)$.
\end{lem}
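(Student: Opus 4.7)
The ``only if'' direction is routine: a weak*-null family in a dual Banach space is bounded by the Banach--Steinhaus theorem, and for every $g\in C(L)$ the identity $\langle\phi_*(G_i),g\rangle=\langle G_i,g\circ\phi\rangle$ together with $g\circ\phi\in C(K)$ shows that $\big(\phi_*(G_i)\big)_{i\in I}$ is weak*-null in $\NBV(L)$. The same argument applied to each retraction $R_t$ handles the fiber families.

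For the converse, the plan is to show that the linear subspace
\[V=\phi^*\big(C(L)\big)+\sum_{t\in Q(\phi)}R_t^*\big(C(\phi^{-1}(t))\big)\]
is norm-dense in $C(K)$; combined with the boundedness hypothesis, a standard $\varepsilon$-approximation will then finish the proof. Density is established by showing via Hahn--Banach that the annihilator $V^\perp\subset C(K)^*\equiv\NBV(K)$ is trivial. For $G\in V^\perp$, the pairing identities of the previous paragraph force $\phi_*(G)=0$ in $\NBV(L)$ and $(R_t)_*(G)=0$ in $\NBV(\phi^{-1}(t))$ for every $t\in Q(\phi)$. Applying formula \eqref{eq:defphistarF} to the surjection $\phi$ gives $G(\max\phi^{-1}(t))=0$ for every $t\in L$; applying the same formula to each retraction $R_t$ (itself a continuous increasing surjection between compact lines) gives $G(x)=0$ for $x$ in the relative interior of $\phi^{-1}(t)$ as well as at $x=\min\phi^{-1}(t)$. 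Since every point of $K$ lies in some fiber and is either the left endpoint, the right endpoint, or an interior point thereof, these identities together force $G\equiv0$.

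With density of $V$ in place, let $M=\sup_{i\in I}\Vert G_i\Vert_{\BV}$, fix $f\in C(K)$ and $\varepsilon>0$, and choose $v=\phi^*(g)+\sum_{k=1}^nR_{t_k}^*(h_k)\in V$ with $\Vert f-v\Vert_\infty<\varepsilon/(2M)$. Then
\[\langle G_i,v\rangle=\langle\phi_*(G_i),g\rangle+\sum_{k=1}^n\langle(R_{t_k})_*(G_i),h_k\rangle\]
is a finite sum of $c_0(I)$-valued families by hypothesis, so $(\langle G_i,v\rangle)_{i\in I}\in c_0(I)$; combined with the uniform bound $|\langle G_i,f-v\rangle|\le\varepsilon/2$, this yields $(\langle G_i,f\rangle)_{i\in I}\in c_0(I)$, proving weak*-nullness of $(G_i)_{i\in I}$.

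The principal obstacle is the verification that $V^\perp=\{0\}$, which expresses the fact that an element of $\NBV(K)$ is fully determined by its pushforward under $\phi$ together with its pushforwards under the retractions $R_t$ for $t\in Q(\phi)$; once formula \eqref{eq:defphistarF} is applied to both $\phi$ and each $R_t$ the verification reduces to a transparent fiberwise pointwise calculation, and everything else is routine duality and approximation.
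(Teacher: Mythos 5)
Your proof is correct and follows the same route as the paper's: the lemma reduces to the density in $C(K)$ of the subspace $V$ spanned by the images of $\phi^*$ and of the $(R_t)^*$, $t\in Q(\phi)$, after which the boundedness hypothesis and an $\varepsilon/2$-approximation finish the argument. The only difference is that the paper simply cites this density fact from \cite[Lemma~2.2]{CTlines}, whereas you supply a self-contained Hahn--Banach proof of it via the pointwise formula \eqref{eq:defphistarF} applied to $\phi$ and to each retraction $R_t$; that fiberwise computation is correct.
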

\begin{proof}
Follows from the fact that the union of the image of the operator $\phi^*$ with the image of all of the operators $(R_t)^*$, $t\in Q(\phi)$, spans a dense subspace of $C(K)$ (see \cite[Lemma~2.2]{CTlines}).
\end{proof}

\begin{rem}\label{thm:remgenDA}
There is a simple construction that can be used to provide examples of continuous increasing surjections $\phi$ between compact lines with a prescribed set $Q(\phi)$. Namely, given a compact line $K$ and a family $(L_t)_{t\in K}$ of nonempty compact lines indexed on $K$, we have that the set $\widetilde K=\bigcup_{t\in K}\big(\{t\}\times L_t\big)$ endowed with the lexicographic order is a compact line and the first projection $\pi_1:\widetilde K\to K$ is a continuous increasing surjection. In particular, for an arbitrary subset $A$ of $K$, the {\em generalized double arrow space}
\[\DA(K;A)=\big(K\times\{0\}\big)\cup\big(A\times\{1\}\big)\]
endowed with the lexicographic order is a compact line and the first projection
$\pi_1:\DA(K;A)\to K$ is a continuous increasing surjection with $Q(\pi_1)=A$ .
\end{rem}

Let us now introduce the notion that will be used to write an equivalence for the property of extendibility of weak*-null families and which will be the central topic
of study in the rest of the paper.
\begin{defin}\label{thm:defc0ell1}
Let $Q$ be a set and $(F_i)_{i\in I}$ be a family of real-valued maps defined on $Q$. We say that $(F_i)_{i\in I}$ is {\em of type $c_0\ell_1$\/} if it is possible to write $F_i(t)$ as a sum
\begin{equation}\label{eq:Fitaitbit}
F_i(t)=a_{i,t}+b_{i,t}
\end{equation}
for certain families $(a_{i,t})_{i\in I,t\in Q}$, $(b_{i,t})_{i\in I,t\in Q}$ of real numbers such that
\begin{equation}\label{eq:aitc0}
\lim_{i\in I}a_{i,t}=0
\end{equation}
for all $t\in Q$ and:
\[\sup_{i\in I}\sum_{t\in Q}\vert b_{i,t}\vert<+\infty.\]
More specifically, given a bounded family $(k_i)_{i\in I}$ of nonnegative real numbers, we say that the family $(F_i)_{i\in I}$ is {\em of type $c_0\ell_1$ bounded by $(k_i)_{i\in I}$} if families of real numbers $(a_{i,t})_{i\in I,t\in Q}$ and $(b_{i,t})_{i\in I,t\in Q}$ exist satisfying \eqref{eq:Fitaitbit}, \eqref{eq:aitc0} and
$\sum_{t\in Q}\vert b_{i,t}\vert\le k_i$, for all $i\in I$.
\end{defin}
For convenience, we will say that a family $(F_i)_{i\in I}$ of real-valued maps defined on a set $Q$ has a certain property over a given subset $S$ of $Q$
if the family $(F_i\vert_S)_{i\in I}$ has the corresponding property. Clearly, the collection of all subsets $S$ of $Q$ such that $(F_i)_{i\in I}$
is of type $c_0\ell_1$ over $S$ is an ideal of the Boolean algebra of subsets of $Q$. Such ideal contains the ideal of finite subsets of $Q$ if the family $(F_i)_{i\in I}$ is pointwise bounded. Moreover, any family $(F_i)_{i\in I}$ is trivially of type $c_0\ell_1$ over the set:
\begin{equation}\label{eq:trivialset}
\big\{t\in Q:\lim_{i\in I}F_i(t)=0\big\}.
\end{equation}
A simple upper bound on the cardinality of the complement of \eqref{eq:trivialset} can be obtained for a family $(F_i)_{i\in I}$ of type $c_0\ell_1$.
\begin{prop}\label{thm:atmostcardI}
If $(F_i)_{i\in I}$ is a family of real-valued maps of type $c_0\ell_1$ defined over a set $Q$, then the cardinality of the
set
\begin{equation}\label{eq:tinQnotgotozero}
\big\{t\in Q:\big(F_i(t)\big)_{i\in I}\not\in c_0(I)\big\}
\end{equation}
is less than or equal to the cardinality of $I$.
\end{prop}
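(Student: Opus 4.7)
The plan is to reduce the failure set to the $\ell_1$-summable part of the decomposition and then exploit that $\ell_1$ families have countable support.

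First I would fix a decomposition $F_i(t)=a_{i,t}+b_{i,t}$ witnessing that $(F_i)_{i\in I}$ is of type $c_0\ell_1$ over $Q$, and observe that for every fixed $t\in Q$ the family $(a_{i,t})_{i\in I}$ lies in $c_0(I)$ by assumption \eqref{eq:aitc0}. Since $c_0(I)$ is a linear subspace of $\mathbb{R}^I$, the membership of $(F_i(t))_{i\in I}$ in $c_0(I)$ is equivalent to that of $(b_{i,t})_{i\in I}$, so the set \eqref{eq:tinQnotgotozero} equals
\[
T=\bigl\{t\in Q:(b_{i,t})_{i\in I}\notin c_0(I)\bigr\}.
\]

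Next, for each fixed $i\in I$, the inequality $\sum_{t\in Q}\vert b_{i,t}\vert<+\infty$ forces the support $Q_i=\{t\in Q:b_{i,t}\ne0\}$ to be countable. On the other hand, if $t\in T$ then $(b_{i,t})_{i\in I}$ is not the zero family (the zero family belongs to $c_0(I)$), so there exists at least one $i\in I$ with $b_{i,t}\ne0$, i.e.\ $t\in Q_i$. This gives the inclusion $T\subseteq\bigcup_{i\in I}Q_i$, whose right-hand side has cardinality at most $\aleph_0\cdot\vert I\vert$.

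To close, I would split on whether $I$ is infinite: in that case $\aleph_0\cdot\vert I\vert=\vert I\vert$ and we are done, while if $I$ is finite then $c_0(I)=\mathbb{R}^I$ (every subset of a finite set is finite), so $T=\emptyset$ and the bound is trivial. I do not anticipate a real obstacle; the only point that needs to be noticed is that the definition of type $c_0\ell_1$ passes the whole problem to the $b_{i,t}$-part, after which the countability of $\ell_1$-supports does all the work.
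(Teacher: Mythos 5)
Your proof is correct and follows essentially the same route as the paper's: both reduce the failure set to $\bigcup_{i\in I}\{t\in Q:b_{i,t}\ne0\}$ and use that each of these supports is countable by $\ell_1$-summability. The only difference is that you explicitly dispose of the finite-$I$ case (where $c_0(I)=\R^I$), a detail the paper leaves implicit.
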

\begin{proof}
Simply note that given families $(a_{i,t})_{i\in I,t\in Q}$, $(b_{i,t})_{i\in I,t\in Q}$ as in Definition~\ref{thm:defc0ell1}, we have that the set
\eqref{eq:tinQnotgotozero} is contained in $\bigcup_{i\in I}\big\{t\in Q:b_{i,t}\ne0\big\}$ and that the set $\big\{t\in Q:b_{i,t}\ne0\big\}$ is countable
for all $i\in I$.
\end{proof}

Now we prove the equivalence between extendibility through $\phi$ of a weak*-null family and the property of being of type $c_0\ell_1$ over $Q(\phi)$.
\begin{teo}\label{thm:c0ell1equivextend}
Let $\phi:K\to L$ be a continuous increasing surjection between compact lines and $(F_i)_{i\in I}$ be a weak*-null family in $\NBV(L)$.
We have that $(F_i)_{i\in I}$ extends through $\phi$ if and only if $(F_i)_{i\in I}$ is of type $c_0\ell_1$ over $Q(\phi)$.
Moreover, if $(k_i)_{i\in I}$ is a bounded family of nonnegative real numbers and $(F_i)_{i\in I}$ is of type $c_0\ell_1$ bounded
by $(k_i)_{i\in I}$ over $Q(\phi)$, then a weak*-null family $(G_i)_{i\in I}$ in $\NBV(K)$ with $\phi_*(G_i)=F_i$ for all $i\in I$ can be chosen with
$\Vert G_i\Vert_{\BV}\le\Vert F_i\Vert_{\BV}+2k_i$, for all $i\in I$.
\end{teo}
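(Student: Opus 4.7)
The plan is to realize a direct correspondence between $c_0\ell_1$ decompositions of $(F_i)$ over $Q(\phi)$ and weak*-null extensions in $\NBV(K)$, using Lemma~\ref{thm:normBVestimaterightcont} for BV estimates and Lemma~\ref{thm:weaknullGicrit} for the fiber characterization of weak*-nullity.

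\textbf{Sufficiency (with the norm bound).} Given a $c_0\ell_1$ decomposition $F_i(t)=a_{i,t}+b_{i,t}$ over $Q(\phi)$ with $\sum_{t\in Q(\phi)}|b_{i,t}|\le k_i$, I will define $G_i\colon K\to\R$ by $G_i(u)=a_{i,t}$ when $u\in[\alpha_t,\beta_t)$ with $t\in Q(\phi)$, and $G_i(u)=F_i(\phi(u))$ otherwise (so $G_i(\beta_t)=F_i(t)$ for every $t\in L$). Then $\phi_*(G_i)=F_i$ holds by construction and $V(G_i|_{\phi^{-1}(t)})=|b_{i,t}|$ for $t\in Q(\phi)$ (zero otherwise). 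Right-continuity is clear within each fiber; at $\beta_t$ with $t$ not right-isolated in $L$, the summability forces all but finitely many $t'>t$ to have $|b_{i,t'}|<\varepsilon/2$, which combined with right-continuity of $F_i$ at $t$ yields a right-neighborhood of $\beta_t$ in $K$ on which $G_i$ stays within $\varepsilon$ of $F_i(t)$. Lemma~\ref{thm:normBVestimaterightcont} then gives $\|G_i\|_{\BV}\le\|F_i\|_{\BV}+2k_i$. Weak*-nullity follows from Lemma~\ref{thm:weaknullGicrit}: boundedness holds, $\phi_*(G_i)=F_i$ is weak*-null by hypothesis, and the fiber push-forwards $(R_t)_*(G_i)$ correspond to two-atom measures $a_{i,t}\delta_{\alpha_t}+(F_i(\max L)-a_{i,t})\delta_{\beta_t}$, weak*-null because $a_{i,t}\to 0$ by the decomposition and $F_i(\max L)\to 0$ since $\max L$ is right-isolated.

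\textbf{Necessity.} Given a weak*-null extension $(G_i)\in\NBV(K)$ with $\phi_*(G_i)=F_i$, I set $a_{i,t}:=G_i(\alpha_t)$ and $b_{i,t}:=G_i(\beta_t)-G_i(\alpha_t)=F_i(t)-a_{i,t}$ for $t\in Q(\phi)$. Disjointness of the intervals $[\alpha_t,\beta_t]$ (noting $\beta_t<\alpha_s$ whenever $t<s$ in $Q(\phi)$) allows one to refine any finite selection of pairs into a partition of $K$, yielding $\sum_{t\in Q(\phi)}|b_{i,t}|\le V(G_i)\le\|G_i\|_{\BV}$, uniformly bounded in $i$. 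For the pointwise $c_0$-property $a_{i,t}\to 0$ I will appeal to Lemma~\ref{thm:weaknullGicrit}: the family $\big((R_t)_*(G_i)\big)$ is weak*-null in $\NBV(\phi^{-1}(t))$, and $(R_t)_*(G_i)(\alpha_t)=G_i(\alpha_t)$ is precisely the atom at $\alpha_t$ of a weak*-null measure on $\phi^{-1}(t)$.

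\textbf{Main obstacle.} The hardest step is establishing $a_{i,t}\to 0$ in the necessity direction when $\alpha_t$ is not isolated in $\phi^{-1}(t)$; in that case the atom at $\alpha_t$ of a weak*-null measure on the fiber need not vanish, so the decomposition extracted directly from an arbitrary given extension can fail to satisfy the $c_0$-property. I expect to resolve this by first modifying the given extension, exploiting the freedom to redistribute mass within each nontrivial fiber, to produce another weak*-null extension in which the fiber-minimum values do tend to zero, and only then reading off the decomposition; the delicate part is checking that this redistribution preserves both weak*-nullity and the extension property $\phi_*(G_i)=F_i$.
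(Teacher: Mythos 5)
Your sufficiency argument is essentially the paper's: the same step-function construction $G_i\equiv a_{i,t}$ on $\left[\alpha_t,\beta_t\right[$, $G_i(\beta_t)=F_i(t)$, with Lemma~\ref{thm:normBVestimaterightcont} giving the norm bound and right-continuity, and Lemma~\ref{thm:weaknullGicrit} giving weak*-nullity via the two-atom fiber measures $a_{i,t}\delta_{\alpha_t}+\big(F_i(\max L)-a_{i,t}\big)\delta_{\beta_t}$. That half is fine.

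The necessity direction, however, has a genuine gap, and it is exactly the one you flag yourself. Reading off $a_{i,t}:=G_i(\alpha_t)$ from an arbitrary weak*-null extension does not give $\lim_{i\in I}a_{i,t}=0$: as you note, the atom of $(R_t)_*(G_i)$ at $\alpha_t$ need not vanish in the limit (already on a fiber order-isomorphic to $[0,1]$ the measures $\delta_{\alpha_t}-\delta_{s_i}$ with $s_i\downarrow\alpha_t$ are weak*-null with constant atom $1$ at $\alpha_t$). Your proposed repair --- modify the given extension by redistributing mass inside each nontrivial fiber so that the values at the fiber minima tend to zero, then read off the decomposition --- is only a hope, not an argument; choosing the new fiber values $c_{i,t}$ with $\lim_i c_{i,t}=0$ and $\sum_t\vert F_i(t)-c_{i,t}\vert$ uniformly bounded is precisely the $c_0\ell_1$ decomposition you are trying to produce, so as stated the plan is circular. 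The paper's resolution avoids any modification of $(G_i)_{i\in I}$: for each $t\in Q(\phi)$ fix a continuous $f_t:K\to[0,1]$ equal to $1$ on $[\min K,\alpha_t]$ and $0$ on $[\beta_t,\max K]$, and set $a_{i,t}=\int_K f_t\,\dd\mu_i$, $b_{i,t}=F_i(t)-a_{i,t}$. Weak*-nullity of $(\mu_i)_{i\in I}$ then gives $\lim_{i\in I}a_{i,t}=0$ for free, while
\[\vert b_{i,t}\vert=\Bigg\vert\int_K\big(\chilow{[\min K,\beta_t]}-f_t\big)\,\dd\mu_i\Bigg\vert\le\vert\mu_i\vert\big(\left]\alpha_t,\beta_t\right]\big),\]
and since the intervals $\left]\alpha_t,\beta_t\right]$, $t\in Q(\phi)$, are pairwise disjoint, $\sum_{t\in Q(\phi)}\vert b_{i,t}\vert\le\Vert\mu_i\Vert$, which is uniformly bounded. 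This "smoothed evaluation" of $G_i$ near $\alpha_t$ is the missing idea; without it (or a worked-out substitute) your necessity direction is incomplete.
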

\begin{proof}
Set $\alpha_t=\min\phi^{-1}(t)$ and $\beta_t=\max\phi^{-1}(t)$, for all $t\in L$.
Assume that $(G_i)_{i\in I}$ is a weak*-null family in $\NBV(K)$ such that $\phi_*(G_i)=F_i$ for all $i\in I$ and let us prove that $(F_i)_{i\in I}$ is of type $c_0\ell_1$ over $Q(\phi)$. For each $t\in Q(\phi)$, let $f_t:K\to[0,1]$ be a continuous map that equals $1$ on $[\min K,\alpha_t]$ and equals zero on $[\beta_t,\max K]$. For each $i\in I$ denote by $\mu_i\in M(K)$ the measure corresponding to $G_i$ and for each $t\in Q(\phi)$ set $a_{i,t}=\int_Kf_t\,\dd\mu_i$ and $b_{i,t}=F_i(t)-a_{i,t}$, so that $\lim_{i\in I}a_{i,t}=0$. We have
\[\vert b_{i,t}\vert=\big\vert G_i(\beta_t)-a_{i,t}\big\vert=\Bigg\vert\int_K\big(\chilow{[\min K,\beta_t]}-f_t\big)\,\dd\mu_i\Bigg\vert
\le\vert\mu_i\vert\big(\left]\alpha_t,\beta_t\right]\!\big)\]
so that $\sum_{t\in Q(\phi)}\vert b_{i,t}\vert\le\Vert\mu_i\Vert$ for all $i\in I$. Conversely, if $(F_i)_{i\in I}$ is of type $c_0\ell_1$ bounded by $(k_i)_{i\in I}$ over $Q(\phi)$, write $F_i(t)=a_{i,t}+b_{i,t}$ with $\lim_{i\in I}a_{i,t}=0$ for all $t\in Q(\phi)$ and
$\sum_{t\in Q(\phi)}\vert b_{i,t}\vert\le k_i$ for all $i\in I$ and define $G_i:K\to\R$ by setting $G_i\vert_{\left[\alpha_t,\beta_t\right[}\equiv a_{i,t}$
for all $i\in I$ and all $t\in Q(\phi)$ and $G_i(\beta_t)=F_i(t)$, for all $i\in I$ and all $t\in L$. Obviously $\phi_*(G_i)=F_i$ and Lemma~\ref{thm:normBVestimaterightcont} yields that $\Vert G_i\Vert_{\BV}\le\Vert F_i\Vert_{\BV}+2\sum_{t\in Q(\phi)}\vert b_{i,t}\vert\le\Vert F_i\Vert_{\BV}+2k_i$ and that $G_i$ is right-continuous, for all $i\in I$. Finally, Lemma~\ref{thm:weaknullGicrit} yields that $(G_i)_{i\in I}$ is weak*-null as for all $i\in I$ and $t\in Q(\phi)$ the map $(R_t)_*(G_i)\in\NBV\!\big([\alpha_t,\beta_t]\big)$ corresponds to the measure
$a_{i,t}\delta_{\alpha_t}+\big(F_i(\max L)-a_{i,t}\big)\delta_{\beta_t}$, where $\delta_s$ denotes the probability measure with support $\{s\}$.
\end{proof}

If $K$ is a compact line and $(F_i)_{i\in I}$ is a weak*-null family in $\NBV(K)$, then $(F_i)_{i\in I}$ is trivially of type $c_0\ell_1$ over $K^+$,
as $\lim_{i\in I}F_i(t)=0$ for all $t\in K^+$. Moreover, $(F_i)_{i\in I}$ is also necessarily of type $c_0\ell_1$ over $K^-$ and this is easily proven
by taking $a_{i,t}$ in \eqref{eq:Fitaitbit} to be the value of $F_i$ at the immediate predecessor of $t$, for all $t\in K^-\setminus\{\min K\}$.
Thus, when checking that $(F_i)_{i\in I}$ is of type $c_0\ell_1$ over a subset $Q$ of $K$, one has only to worry about the internal points of $K$ in $Q$.
We will see now that there are other ``small'' subsets of $Q$ that can be ignored when checking that a weak*-null family is of type $c_0\ell_1$ over $Q$.
We define below a notion of height for subsets of a compact line that is similar to the Cantor--Bendixson height of a topological space.
\begin{defin}
Let $K$ be a compact line and $A$ be a subset of $K$. We say that a point $t\in A$ is {\em internal to $A$ relatively to $K$\/}
if $t$ is neither the minimum nor the maximum of $K$ and if for all $s_1,s_2\in K$ with $s_1<t<s_2$ we have that $\left]s_1,t\right[\cap A\ne\emptyset$ and $\left]t,s_2\right[\cap A\ne\emptyset$. The {\em two-sided derivative\/}
of $A$ in $K$, denoted $\Der(A,K)$, is defined as the set of points of $A$ that are internal to $A$ relatively to $K$.
For any natural number $n$, we define recursively the {\em $n$-th two-sided derivative\/} of $A$ in $K$ by setting
$\Der_0(A,K)=A$ and $\Der_{n+1}(A,K)=\Der\!\big(\!\Der_n(A,K),K\big)$, for all $n\ge0$. We say that $A$ has {\em finite two-sided height\/} in $K$ if $\Der_n(A,K)=\emptyset$ for some natural number $n$ and in this case the least such natural number $n$ is called the {\em two-sided height\/} of $A$ in $K$.
\end{defin}
We observe that the {\em internal order\/} of $A$ defined in \cite{KK} is equal to the two-sided height of $A$ in $K$ minus $1$.
It follows from \cite[Lemma~2.6]{KK} that the collection of subsets of $K$ having finite two-sided height in $K$ is an ideal of the Boolean algebra of subsets of $K$.
\begin{prop}\label{thm:finiteheight}
Let $K$ be a compact line and $(F_i)_{i\in I}$ be a weak*-null family in $\NBV(K)$. If $A$ is a subset of $K$ having finite two-sided height in $K$, then $(F_i)_{i\in I}$ is of type $c_0\ell_1$ over $A$.
\end{prop}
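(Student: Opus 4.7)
The plan is to prove Proposition~\ref{thm:finiteheight} by induction on the two-sided height $n$ of $A$ in $K$. The base case $n=0$ is immediate since $A=\emptyset$.

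For the inductive step, suppose the result holds for height $<n$, and let $A$ have two-sided height $n\ge 1$. Set $A'=\Der(A,K)$, which has two-sided height at most $n-1$. By the inductive hypothesis, $(F_i)_{i\in I}$ is of type $c_0\ell_1$ over $A'$, so since this property is an ideal of subsets of $K$ (remarked after Definition~\ref{thm:defc0ell1}), it is enough to handle $B:=A\setminus A'$. A key observation here drastically simplifies the induction: $B$ itself has two-sided height at most $1$ in $K$. Indeed, for any $t\in B$, since $t\notin\Der(A,K)$, either $t\in\{\min K,\max K\}$ or there exists some $s$ on one side with $(s,t)\cap A=\emptyset$ (or $(t,s)\cap A=\emptyset$); since $B\subseteq A$, the same gap works for $B$, so $t\notin\Der(B,K)$, giving $\Der(B,K)=\emptyset$. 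This reduces everything to proving the case $n=1$.

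For $A$ of height $1$, I would partition $A=(A\cap K^+)\cup(A\cap(K^-\setminus\{\min K\}))\cup(A\cap\{\min K,\max K\})\cup A_{\mathrm{int}}$, where $A_{\mathrm{int}}=A\setminus(K^+\cup K^-\cup\{\min K,\max K\})$. The pieces inside $K^+$ and $K^-$ are handled exactly by the arguments recalled just before Proposition~\ref{thm:finiteheight} (taking $a_{i,t}=F_i(t)$ on $K^+$, and $a_{i,t}=F_i(t^-)$ on $K^-\setminus\{\min K\}$, with $t^-$ the immediate predecessor in $K^+$), and the extremal set has at most two elements and is trivial since $(F_i)$ is pointwise bounded. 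All the weight of the proof is therefore in the decomposition over $A_{\mathrm{int}}$.

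For each $t\in A_{\mathrm{int}}$, since $t$ is internal in $K$ but not in $\Der(A,K)$, there is a nonempty open interval of $K$ adjacent to $t$ disjoint from $A$; working with left-gaps (the right-gap case being symmetric), let $s_1(t)=\sup\bigl(A\cap[\min K,t)\bigr)$, so that $s_1(t)<t$ and $(s_1(t),t)\cap A=\emptyset$. A direct order-theoretic check shows that the half-open intervals $(s_1(t),t]$ are pairwise disjoint as $t$ ranges in $A_{\mathrm{int}}\cap A_L$, and hence for the measure $\mu_i\in M(K)$ corresponding to $F_i$ via Proposition~\ref{thm:propNBVK},
\[\sum_{t}\bigl|\mu_i\bigl((s_1(t),t]\bigr)\bigr|\le\Vert\mu_i\Vert,\]
uniformly bounded in $i$. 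Using $F_i(t)=F_i(s_1(t))+\mu_i\bigl((s_1(t),t]\bigr)$, this yields the desired uniform $\ell^1$-bound for the $b$-part of the decomposition.

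The main obstacle is to match this with an $a$-part tending to zero. The naive choice $a_{i,t}=F_i(s_1(t))$ fails because $s_1(t)$ generally lies neither in $K^+$ nor in $A'$, and the point-value $F_i(s_1(t))$ need not converge to $0$. The plan to overcome this is to exploit the nondegeneracy of the gap $(s_1(t),t)\ne\emptyset$ in $K$: one selects an auxiliary $\gamma_{i,t}$ (built, for instance, as $F_i(u(t))$ for $u(t)\in(s_1(t),t)$ or by subtracting off a ``local'' piece of $\mu_i$ supported in the gap) and combines the right-continuity of $F_i\in\NBV(K)$ with the weak*-null hypothesis applied to continuous approximations of $\chi_{[\min K,t]}$ in order to force $\gamma_{i,t}\to 0$ while keeping $F_i(t)-\gamma_{i,t}$ expressible as a measure of a small interval inside the disjoint family $\bigl\{(s_1(t),t]\bigr\}_t$. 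This is the step I expect to be the technical heart of the proof. Once such a choice of $\gamma_{i,t}$ is available, assembling the pieces from $A\cap K^+$, $A\cap K^-$, the extremals, $A_{\mathrm{int}}\cap A_L$, and (symmetrically) $A_{\mathrm{int}}\cap A_R$, and combining with the decomposition over $A'$ coming from the inductive hypothesis via the ideal property, yields the desired decomposition over $A$ and completes the induction.
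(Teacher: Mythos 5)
Your reduction to the height-one case is correct and the skeleton of the argument is sound: $A\setminus\Der(A,K)$ does have two-sided height at most $1$, the ideal property lets you recombine the pieces, the points of $A\cap(K^+\cup K^-)$ are handled as in the remarks preceding the proposition, and the intervals $\left]s_1(t),t\right]$ attached to the left-gap points are indeed pairwise disjoint, giving $\sum_t\vert\mu_i\vert\big(\left]s_1(t),t\right]\big)\le\Vert\mu_i\Vert$. The genuine gap is exactly where you flag it: you never actually produce the $c_0$ part of the decomposition at the internal points, and of the two candidates you float, the first ($a_{i,t}=F_i(u(t))$ for a point $u(t)$ in the gap) cannot work, since $u(t)$ is just some point of $K$ and there is no reason for $\big(F_i(u(t))\big)_{i\in I}$ to lie in $c_0(I)$. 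The second candidate is the right one and closes the gap with no further technicality: since $s_1(t)<t$, Urysohn's lemma gives a continuous $f_t:K\to[0,1]$ equal to $1$ on $[\min K,s_1(t)]$ and to $0$ on $[t,\max K]$; set $a_{i,t}=\int_Kf_t\,\dd\mu_i$, which tends to $0$ in $i$ by weak*-nullity, and $b_{i,t}=F_i(t)-a_{i,t}=\int_K\big(\chilow{[\min K,t]}-f_t\big)\,\dd\mu_i$, whose absolute value is at most $\vert\mu_i\vert\big(\left]s_1(t),t\right]\big)$ because the integrand vanishes off $\left]s_1(t),t\right]$ and is bounded by $1$. No appeal to right-continuity of $F_i$ is needed. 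The right-gap case is symmetric, with $f_t=1$ on $[\min K,t]$ and $f_t=0$ on $[s_2(t),\max K]$, $\vert b_{i,t}\vert\le\vert\mu_i\vert\big(\left]t,s_2(t)\right[\big)$, and the intervals $\left]t,s_2(t)\right[$ again pairwise disjoint. This is precisely the device used in the paper's proof of Theorem~\ref{thm:c0ell1equivextend} and in the lemma yielding \eqref{eq:ineqFits1}--\eqref{eq:ineqFits2}.

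For comparison, the paper's own proof is entirely different and much shorter: it realizes $A$ as $Q(\pi_1)$ for the first projection $\pi_1:\DA(K;A)\to K$ and invokes \cite[Lemma~2.7]{KK}, which says that finite two-sided height of $Q(\pi_1)$ forces the image of $\pi_1^*$ to be complemented in $C\big(\DA(K;A)\big)$; every weak*-null family then extends through $\pi_1$, and Theorem~\ref{thm:c0ell1equivextend} converts extendibility into the $c_0\ell_1$ property over $A$. Your route, once completed as above, has the merit of being self-contained (it avoids the Kalenda--Kubi\'s complementation result), but as written it stops short of the decisive step.
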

\begin{proof}
Let $\phi:\DA(K;A)\to K$ denote the first projection, so that $\phi$ is a continuous increasing surjection with $Q(\phi)=A$ (see Remark~\ref{thm:remgenDA}). Since $Q(\phi)$ has finite two-sided height, it follows from \cite[Lemma~2.7]{KK} that the image of $\phi^*$ is complemented in $C\big(\!\DA(K;A)\big)$.
Hence any weak*-null family $(F_i)_{i\in I}$ in $\NBV(K)$ extends through $\phi$ and the conclusion follows from Theorem~\ref{thm:c0ell1equivextend}.
\end{proof}

If $(F_i)_{i\in I}$ is a family of real-valued maps defined on a set $Q$ and if $(F_i)_{i\in I}$ is of type $c_0\ell_1$ then, for a finite subset $S$ of $Q$, the sums $\sum_{t\in S}\vert F_i(t)\vert$, $i\in I$, are bounded, up to an element of $c_0(I)$, by a constant that is independent of $S$. In order to make this statement precise, we introduce the following notation.
\begin{defin}
Given families of real numbers $(x_i)_{i\in I}$ and $(y_i)_{i\in I}$, we will write
\[x_i\le y_i\mod c_0(I)\]
if there exists $(z_i)_{i\in I}$ in $c_0(I)$ such that $x_i\le y_i+z_i$, for all $i\in I$; equivalently, if the set $\big\{i\in I:x_i\ge y_i+\varepsilon\big\}$ is finite for all $\varepsilon>0$.
\end{defin}
Clearly, if a family $(F_i)_{i\in I}$ of real-valued maps defined on a set $Q$ is of type $c_0\ell_1$ bounded by $(k_i)_{i\in I}$ then
\begin{equation}\label{eq:finitesummodc0}
\sum_{t\in S}\vert F_i(t)\vert\le k_i\mod c_0(I),
\end{equation}
for every finite subset $S$ of $Q$. The converse holds when $Q$ is countable, as we show below.

\begin{prop}\label{thm:propTcountable}
Let $(F_i)_{i\in I}$ be a family of real-valued maps defined on a set $Q$ and assume that \eqref{eq:finitesummodc0} holds for every finite subset
$S$ of $Q$ and some bounded family $(k_i)_{i\in I}$ of nonnegative real numbers. If $Q$ is countable, then $(F_i)_{i\in I}$ is of type $c_0\ell_1$ bounded by $(k_i)_{i\in I}$.
\end{prop}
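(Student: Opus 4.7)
The plan is to enumerate $Q=\{t_1,t_2,\ldots\}$ and construct the decomposition via a greedy cutoff, allocating as much of the ``budget'' $k_i$ as possible to the initial terms of $F_i$. Applying the hypothesis to $S=\{t_1,\ldots,t_n\}$ and taking positive parts gives, for each $n\ge1$, a family
\[u_{n,i}=\max\!\Big(0,\ \sum_{k=1}^n\vert F_i(t_k)\vert-k_i\Big)\]
in $c_0(I)$; note that $u_{n,i}$ is nondecreasing in $n$ for every fixed $i$.

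For each $i\in I$, let $N(i)\in\{0,1,2,\ldots\}\cup\{\infty\}$ be the largest integer $n$ with $\sum_{k=1}^n\vert F_i(t_k)\vert\le k_i$. I set $b_{i,t_k}=F_i(t_k)$ for $k\le N(i)$, set $b_{i,t_k}=0$ for $k>N(i)+1$, and at the single boundary index $k=N(i)+1$ I choose $b_{i,t_k}$ with the same sign as $F_i(t_k)$ and absolute value equal to the remaining budget $k_i-\sum_{j=1}^{N(i)}\vert F_i(t_j)\vert$; finally $a_{i,t_k}=F_i(t_k)-b_{i,t_k}$. By construction, $\sum_k\vert b_{i,t_k}\vert\le k_i$ for every $i$, with the obvious simplification when $N(i)=\infty$.

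The crux is verifying that $(a_{i,t_k})_{i\in I}\in c_0(I)$ for each fixed $k$, and for this a short case analysis yields the pointwise domination $\vert a_{i,t_k}\vert\le u_{k,i}$ for all $i$ and $k$: the term vanishes for $k\le N(i)$; equals exactly $u_{k,i}$ when $k=N(i)+1$ (this is where the sign choice at the boundary is crucial); and in the remaining case $k>N(i)+1$ one has $k-1>N(i)$ as well, whence $\vert a_{i,t_k}\vert=\vert F_i(t_k)\vert=u_{k,i}-u_{k-1,i}\le u_{k,i}$. Since $(u_{k,i})_{i\in I}\in c_0(I)$ from the first paragraph, the required $\lim_{i\in I}a_{i,t_k}=0$ follows immediately.

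I do not anticipate any substantial obstacle: the countability of $Q$ enables a straightforward one-parameter greedy construction, and the only delicate point is the sign choice at the boundary index $N(i)+1$, which is what allows the $a$-part to be controlled pointwise by the monotone excess $u_{k,i}$ rather than by something strictly larger.
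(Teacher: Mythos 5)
Your proof is correct, and every step checks out: the families $(u_{n,i})_{i\in I}$ do lie in $c_0(I)$ (this is exactly the reformulation of \eqref{eq:finitesummodc0} for $S=\{t_1,\dots,t_n\}$ via positive parts), the boundary value $F_i(t_{N(i)+1})$ is necessarily nonzero when $N(i)<\infty$ (otherwise $N(i)$ would not be maximal), so the sign choice is well defined, and the case analysis giving $\vert a_{i,t_k}\vert\le u_{k,i}$ is sound, including the identity $\vert a_{i,t_{N(i)+1}}\vert=\sum_{j\le N(i)+1}\vert F_i(t_j)\vert-k_i=u_{N(i)+1,i}$. The route is genuinely different in its organization from the paper's. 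The paper fixes an increasing exhaustion $Q=\bigcup_n Q_n$ by finite sets, chooses an increasing sequence of finite witness sets $I_n\subset I$ with $\sum_{t\in Q_n}\vert F_i(t)\vert<k_i+\frac1n$ off $I_n$, and then defines the decomposition layer by layer on $I_{n+1}\setminus I_n$ (splitting on $Q_n$ so that the $a$-part has total mass at most $\frac1n$, and putting all of $F_i$ into $a$ outside $Q_n$); the verification that $\lim_i a_{i,t}=0$ is a diagonalization over $n$. Your construction makes no choices in $I$ at all: the cutoff $N(i)$ is determined pointwise in $i$ by the greedy rule along a single enumeration of $Q$, and the $c_0(I)$ convergence of the error is deduced a posteriori from the pointwise domination by the monotone excesses $u_{k,i}$. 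What your approach buys is a canonical, fully explicit decomposition and a cleaner verification (no auxiliary sequence $(I_n)$ and no unspecified ``elementary argument'' for the finite splitting); what the paper's buys is that the same layering-by-$I_n$ template reappears elsewhere in the article (e.g.\ in Proposition~\ref{thm:propBicountable}), so it is the more reusable pattern there. Both arguments use the countability of $Q$ in the same essential way, and both yield the sharp bound $\sum_{t\in Q}\vert b_{i,t}\vert\le k_i$.
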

\begin{proof}
Write $Q=\bigcup_{n=1}^\infty Q_n$ as the union of an increasing sequence $(Q_n)_{n\ge1}$ of finite sets. Since $\sum_{t\in Q_n}\vert F_i(t)\vert\le k_i\mod c_0(I)$ for all $n$, we can find an increasing sequence $(I_n)_{n\ge1}$ of finite subsets of $I$ such that
\begin{equation}\label{eq:Fitki1n}
\sum_{t\in Q_n}\vert F_i(t)\vert<k_i+\frac1n
\end{equation}
for all $i\in I\setminus I_n$ and for all $n\ge1$. For $i\in I_1$, we set $a_{i,t}=F_i(t)$ and $b_{i,t}=0$ for all $t\in Q$. For each $n\ge1$ and for
$i\in I_{n+1}\setminus I_n$, since \eqref{eq:Fitki1n} holds, an elementary argument shows that we can write $F_i(t)=a_{i,t}+b_{i,t}$, for $t\in Q_n$, with
$\sum_{t\in Q_n}\vert a_{i,t}\vert\le\frac1n$ and $\sum_{t\in Q_n}\vert b_{i,t}\vert\le k_i$. For $i\in I_{n+1}\setminus I_n$ and $t\in Q\setminus Q_n$,
we set $a_{i,t}=F_i(t)$ and $b_{i,t}=0$. This completes the definition of $a_{i,t}$ and $b_{i,t}$ for all $i\in\bigcup_{n=1}^\infty I_n$ and all $t\in Q$.
Now for $i\in I\setminus\bigcup_{n=1}^\infty I_n$, we have that \eqref{eq:Fitki1n} holds for all $n\ge1$ and thus $\sum_{t\in Q}\vert F_i(t)\vert\le k_i$;
in this case we set $a_{i,t}=0$ and $b_{i,t}=F_i(t)$, for all $t\in Q$. We then have $F_i(t)=a_{i,t}+b_{i,t}$ for all $i\in I$, $t\in Q$ and
$\sum_{t\in Q}\vert b_{i,t}\vert\le k_i$, for all $i\in I$. Finally, given $t\in Q$, we have $t\in Q_n$ for $n\ge1$ sufficiently large and thus
$\vert a_{i,t}\vert\le\frac1n$ for all $i\in I$ outside of the finite set $I_n$. Hence $\lim_{i\in I}a_{i,t}=0$, concluding the proof.
\end{proof}

Now we prove some results that will yield estimates like \eqref{eq:finitesummodc0} for weak*-null families $(F_i)_{i\in I}$ in $\NBV(K)$.
\begin{lem}
Let $K$ be a compact line and $(F_i)_{i\in I}$ be a weak*-null family in $\NBV(K)$. For each $i\in I$, denote by $\mu_i\in M(K)$ the measure associated to $F_i$. Given $t,s\in K$ with $t<s$, we have that
\begin{gather}
\label{eq:ineqFits1}\vert F_i(t)\vert\le\vert\mu_i\vert\big(\left]t,s\right[\!\big)\mod c_0(I),\\
\label{eq:ineqFits2}\vert F_i(t)\vert+\vert F_i(s)\vert\le\vert\mu_i\vert\big(\left]t,s\right]\!\big)\mod c_0(I).
\end{gather}
\end{lem}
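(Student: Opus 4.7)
The plan is to prove both inequalities by integrating carefully chosen continuous functions $g:K\to[-1,1]$ against $\mu_i$, splitting the integral into pieces over $[\min K,t]$, $\left]t,s\right[$ and $[s,\max K]$. Weak*-nullness turns $\int g\,\dd\mu_i$ into a $c_0(I)$-error, the values of $g$ on the outer pieces extract linear combinations of $F_i(t)$ and $F_i(s)$ (after rewriting $\mu_i([s,\max K])$ via right-continuity), and the middle piece is bounded in absolute value by $|\mu_i|\bigl(\left]t,s\right[\bigr)$.

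For \eqref{eq:ineqFits1}, I would pick $f\in C(K)$ with $f=1$ on $[\min K,t]$, $f=0$ on $[s,\max K]$ and $f\in[0,1]$ (such $f$ exists since $K$ is normal). Splitting over the three pieces gives
\[\int_K f\,\dd\mu_i=F_i(t)+\int_{\left]t,s\right[}f\,\dd\mu_i,\]
so $\vert F_i(t)\vert\le\bigl\vert\int f\,\dd\mu_i\bigr\vert+\vert\mu_i\vert\bigl(\left]t,s\right[\bigr)$, and $\int f\,\dd\mu_i\to0$ since $(F_i)_{i\in I}$ is weak*-null.

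For \eqref{eq:ineqFits2}, applying \eqref{eq:ineqFits1} at $t$ and at $s$ (via some $s'>s$) only yields a bound with a factor larger than $1$, so the key idea is to use a single function that detects $F_i(t)$ and $F_i(s)$ simultaneously with the correct signs. For each $(\sigma,\tau)\in\{-1,+1\}^2$ fix $g_{\sigma\tau}\in C(K)$ with $\vert g_{\sigma\tau}\vert\le1$, $g_{\sigma\tau}=\sigma$ on $[\min K,t]$ and $g_{\sigma\tau}=-\tau$ on $[s,\max K]$. Since $\mu_i(\left[\min K,s\right[)=F_i(s)-\mu_i(\{s\})$ by right-continuity, we have $\mu_i([s,\max K])=F_i(\max K)-F_i(s)+\mu_i(\{s\})$; plugging this into the three-piece decomposition of $\int g_{\sigma\tau}\,\dd\mu_i$ and rearranging gives
\[\sigma F_i(t)+\tau F_i(s)=\int_K g_{\sigma\tau}\,\dd\mu_i-\int_{\left]t,s\right[}g_{\sigma\tau}\,\dd\mu_i+\tau F_i(\max K)+\tau\mu_i(\{s\}).\]
Choosing $\sigma=\mathrm{sgn}\,F_i(t)$ and $\tau=\mathrm{sgn}\,F_i(s)$ makes the left side equal to $\vert F_i(t)\vert+\vert F_i(s)\vert$; bounding the right side absolutely yields
\[\vert F_i(t)\vert+\vert F_i(s)\vert\le\Big\vert\!\int g_{\sigma\tau}\,\dd\mu_i\Big\vert+\vert\mu_i\vert\bigl(\left]t,s\right[\bigr)+\vert F_i(\max K)\vert+\vert\mu_i\vert(\{s\}),\]
and the last two $|\mu_i|$-terms combine into $\vert\mu_i\vert\bigl(\left]t,s\right]\bigr)$, which is precisely why the half-open rather than the open interval appears.

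Finally, to see the error term is in $c_0(I)$: since $(\sigma,\tau)$ depends on $i$, one bounds $\bigl\vert\int g_{\sigma\tau}\,\dd\mu_i\bigr\vert$ by $\sum_{(\sigma',\tau')}\bigl\vert\int g_{\sigma'\tau'}\,\dd\mu_i\bigr\vert$, a finite sum of terms each in $c_0(I)$ by weak*-nullness applied to the (fixed) $g_{\sigma'\tau'}$; and $F_i(\max K)=\int1\,\dd\mu_i\to0$ as well. The main obstacle is the second inequality: the natural two-step strategy loses the tight constant $1$, and one must instead partition $I$ into the four sign classes and absorb the atom $\mu_i(\{s\})$ arising from right-continuity into the desired half-open interval.
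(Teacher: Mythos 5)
Your treatment of the first inequality coincides with the paper's (Urysohn function $f$ that is $1$ on $[\min K,t]$ and $0$ on $[s,\max K]$, with the discrepancy $\chilow{[\min K,t]}-f$ supported on $\left]t,s\right[$). For the second inequality you take a genuinely different, and correct, route. The paper keeps nonnegative test functions and instead splits the interval at an auxiliary point $u\in\left]t,s\right[$: it applies the first inequality to the pair $(t,u)$ and the analogous estimate $\vert F_i(s)\vert\le\vert\mu_i\vert\big(\left]u,s\right]\big)\mod c_0(I)$ to the pair $(u,s)$, and adds; the sharp constant comes from the disjointness of $\left]t,u\right[$ and $\left]u,s\right]$ inside $\left]t,s\right]$, and the degenerate case $\left]t,s\right[=\emptyset$ is handled separately using that $t$ is then right-isolated, so $\lim_{i\in I}F_i(t)=0$. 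You instead use a single test function whose sign structure ($\sigma$ on $[\min K,t]$, $-\tau$ on $[s,\max K]$, existing by normality of $K$) makes the two boundary contributions add with the correct signs; the price is the rewriting $\mu_i([s,\max K])=F_i(\max K)-F_i(s)+\mu_i(\{s\})$ — which is precisely where the half-open interval enters — together with the partition of $I$ into four sign classes (handled, as you say, by summing $\big\vert\int_Kg_{\sigma'\tau'}\,\dd\mu_i\big\vert$ over the four fixed functions) and the extra observation $F_i(\max K)=\int_K1\,\dd\mu_i\to0$. Your identity checks out, and your version treats $\left]t,s\right[=\emptyset$ uniformly, whereas the paper's is a bit more economical in that it only reuses the one-point estimate. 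One cosmetic slip: the two terms that combine into $\vert\mu_i\vert\big(\left]t,s\right]\big)$ are $\vert\mu_i\vert\big(\left]t,s\right[\big)$ and $\vert\mu_i\vert(\{s\})$, not the ``last two'' terms of your displayed bound.
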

\begin{proof}
Pick a continuous function $f:K\to[0,1]$ which equals $1$ on $[\min K,t]$ and equals zero on $[s,\max K]$.
We have
\[\Bigg\vert F_i(t)-\int_Kf\,\dd\mu_i\Bigg\vert=\Bigg\vert\int_K\big(\chilow{[\min K,t]}-f\big)\,\dd\mu_i\Bigg\vert\le\vert\mu_i\vert\big(\left]t,s\right[\!\big),\]
for all $i\in I$ and $\lim_{i\in I}\int_Kf\,\dd\mu_i=0$, from which \eqref{eq:ineqFits1} follows. To prove \eqref{eq:ineqFits2}, we first note that
for any $u\in K$ with $u<s$ we have:
\begin{equation}\label{eq:ineqFitsaux}
\vert F_i(s)\vert\le\vert\mu_i\vert\big(\left]u,s\right]\!\big)\mod c_0(I);
\end{equation}
namely, this is obtained as in the proof of \eqref{eq:ineqFits1} above using a continuous function $f:K\to[0,1]$ which equals $1$ on $[\min K,u]$
and equals zero on $[s,\max K]$. Now if $\left]t,s\right[$ is empty, we have that $t$ is right-isolated and $\lim_{i\in I}F_i(t)=0$, so that
\eqref{eq:ineqFits2} follows from \eqref{eq:ineqFitsaux} with $u=t$.
Finally, if $\left]t,s\right[$ is nonempty, pick any $u\in\left]t,s\right[$ and obtain \eqref{eq:ineqFits2} by replacing $s$ with $u$ in \eqref{eq:ineqFits1} and
adding the resulting inequality to \eqref{eq:ineqFitsaux}.
\end{proof}

A subset $J$ of a compact line $K$ is said to be {\em convex\/} if for all $t,s\in J$ with $t\le s$ we have that $[t,s]$ is contained in $J$.
\begin{cor}\label{thm:corextremities}
Let $K$ be a compact line and $(F_i)_{i\in I}$ be a weak*-null family in $\NBV(K)$. Given a convex open subset $J$ of $K$ and $t\in J$, we have
\begin{gather}
\label{eq:ineqFtK1}\vert F_i(t)\vert\le\vert\mu_i\vert\big([\min K,t]\cap J\big)\mod c_0(I),\\
\label{eq:ineqFtK2}\vert F_i(t)\vert\le\vert\mu_i\vert\big(\left]t,\max K\right]\cap J\big)\mod c_0(I).
\end{gather}
\end{cor}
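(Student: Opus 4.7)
The plan is to derive both bounds directly from the previous lemma by picking a witness point in $J$ slightly inside the relevant endpoint, and to treat separately the degenerate sub-cases in which such a witness fails to exist.

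For \eqref{eq:ineqFtK1}, I would split on whether $\min K\in J$. If $\min K\in J$, convexity forces $[\min K,t]\subseteq J$, hence $[\min K,t]\cap J=[\min K,t]$ and the inequality $\vert F_i(t)\vert=\vert\mu_i([\min K,t])\vert\le\vert\mu_i\vert([\min K,t])$ holds even without a $c_0(I)$ correction. Otherwise let $a=\inf J$; then $a\in K$ and, since $J$ is open and convex with $a\notin J$, we have $[\min K,t]\cap J=\left]a,t\right]$. If some $u$ satisfies $a<u<t$, applying \eqref{eq:ineqFits2} to the pair $u<t$ yields
\[\vert F_i(t)\vert\le\vert F_i(u)\vert+\vert F_i(t)\vert\le\vert\mu_i\vert(\left]u,t\right])\le\vert\mu_i\vert(\left]a,t\right])\mod c_0(I).\]
If instead $\left]a,t\right[=\emptyset$, then $a$ is right-isolated in $K$ with $t$ as immediate successor, so $\chilow{[\min K,a]}\in C(K)$ forces $\lim_{i\in I}F_i(a)=0$; combining the identity $F_i(t)-F_i(a)=\mu_i(\{t\})$ with $\vert\mu_i(\{t\})\vert\le\vert\mu_i\vert(\{t\})=\vert\mu_i\vert(\left]a,t\right])$ then gives the required bound modulo $c_0(I)$.

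For \eqref{eq:ineqFtK2} I would argue symmetrically by splitting on whether $\max K\in J$. If $\max K\in J$, convexity yields $\left]t,\max K\right]\cap J=\left]t,\max K\right]$ and \eqref{eq:ineqFits2} applied with $s=\max K$ gives the bound (using that $F_i(\max K)\to 0$ since $\max K$ is right-isolated). If $\max K\notin J$, set $b=\sup J$; then $b\in K$, $b\notin J$, and $\left]t,\max K\right]\cap J=\left]t,b\right[$, so \eqref{eq:ineqFits1} applied to any $s\in\left]t,b\right[$ furnishes the bound, while in the degenerate case $\left]t,b\right[=\emptyset$ the point $t$ itself is right-isolated, so $F_i(t)\to 0$ and the inequality is trivial.

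The substantive analytic work is already packaged in the preceding lemma; the only real obstacle is the careful bookkeeping needed to identify the degenerate edge cases in which one of the witness intervals $\left]a,t\right[$ or $\left]t,b\right[$ collapses to the empty set, and to verify that in those cases the resulting right-isolated endpoint provides a vanishing-in-the-limit term that is absorbed into the $c_0(I)$ error.
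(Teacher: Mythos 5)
Your overall strategy is the paper's: feed a nearby witness point into \eqref{eq:ineqFits1}/\eqref{eq:ineqFits2} and absorb the boundary cases via right-isolated points. But your case analysis for \eqref{eq:ineqFtK1} has a genuine hole. You assert that when $\min K\notin J$ the point $a=\inf J$ is not in $J$, and, in the degenerate branch $\left]a,t\right[=\emptyset$, that $a$ is right-isolated with $t$ as its immediate successor. Both claims fail when $t=\min J$: an open convex $J$ can perfectly well have a minimum (openness only forces that minimum to be left-isolated). Take $K=\{0\}\cup[1,2]$ and $J=[1,2[\,=\left]0,2\right[$ with $t=1$: then $a=\inf J=t$, the interval $\left]a,t\right[$ is vacuously empty, $a=t$ is not right-isolated, and your identity $F_i(t)-F_i(a)=\mu_i\big(\{t\}\big)$ degenerates to the false statement $\mu_i\big(\{t\}\big)=0$. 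So the branch meant to handle the degenerate situation does not cover the one degenerate case that actually requires an argument, namely $t=\min J\ne\min K$.

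The repair is exactly the paper's treatment of that case: since $J$ is open and $t=\min J\ne\min K$, the point $t$ is left-isolated, hence has an immediate predecessor $t^-$ which is right-isolated; then $F_i(t)=F_i(t^-)+\mu_i\big(\{t\}\big)$ with $\lim_{i\in I}F_i(t^-)=0$, and $\{t\}\subseteq[\min K,t]\cap J$ gives \eqref{eq:ineqFtK1}. A parallel but smaller slip occurs in \eqref{eq:ineqFtK2}: when $t=\max J=\sup J$, the emptiness of $\left]t,b\right[$ is vacuous and by itself does not make $t$ right-isolated; you need the openness of $J$ together with $\max K\notin J$ for that (the conclusion is still correct). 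Apart from these endpoint cases your argument goes through: in the main branches you only use that $\left]a,t\right]$, respectively $\left]t,s\right[$, is contained in the relevant intersection with $J$, and that containment holds whether or not $\inf J$ or $\sup J$ belongs to $J$.
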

\begin{proof}
If $t$ is not the minimum of $J$, then \eqref{eq:ineqFtK1} follows by picking $s\in J$ with $s<t$ and by using \eqref{eq:ineqFits2} with the roles
of $t$ and $s$ interchanged. If $t$ is the minimum of $J$ then the fact that $J$ is open implies that $t$ is left-isolated. Thus
either $t$ is the minimum of $K$, in which case $F_i(t)=\mu_i\big(\{t\}\big)$, or $t$ has a right-isolated immediate predecessor $t^-\in K$
and $F_i(t)=F_i(t^-)+\mu_i\big(\{t\}\big)$, with $\lim_{i\in I}F_i(t^-)=0$. In either case, \eqref{eq:ineqFtK1} follows.
Now, to prove \eqref{eq:ineqFtK2}, use \eqref{eq:ineqFits1} with any $s\in J$ greater than $t$ if $t$ is not the maximum of $J$ and note
that \eqref{eq:ineqFtK2} holds trivially if $t$ is the maximum of $J$ since in this case $t$ is right-isolated and $\lim_{i\in I}F_i(t)=0$.
\end{proof}

\begin{prop}\label{thm:weakstarnullboundfinsums}
Let $K$ be a compact line, $(F_i)_{i\in I}$ be a weak*-null family in $\NBV(K)$, $J$ be a convex subset of $K$ and $S$ be a finite subset of $J$.
If $J$ has more than one point then
\begin{equation}\label{eq:ineqsumJnotopen}
\sum_{t\in S}\vert F_i(t)\vert\le\vert\mu_i\vert(J)\mod c_0(I)
\end{equation}
and if $J$ is open then
\begin{equation}\label{eq:ineqsumJopen}
\sum_{t\in S}\vert F_i(t)\vert\le\frac12\vert\mu_i\vert(J)\mod c_0(I),
\end{equation}
where $\mu_i\in M(K)$ denotes the measure associated to $F_i$.
\end{prop}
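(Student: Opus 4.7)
The plan is to enumerate $S$ as $t_1<t_2<\cdots<t_n$ and combine the pairwise estimate \eqref{eq:ineqFits2} applied to consecutive pairs $(t_j,t_{j+1})$ with the one-sided extremity bounds of Corollary~\ref{thm:corextremities} applied at $t_1$ and $t_n$ when $J$ is open. Summing \eqref{eq:ineqFits2} over $j=1,\ldots,n-1$, and using the disjointness of the half-open intervals $\left]t_j,t_{j+1}\right]$, yields the basic estimate
\[\vert F_i(t_1)\vert+2\sum_{j=2}^{n-1}\vert F_i(t_j)\vert+\vert F_i(t_n)\vert\le\vert\mu_i\vert\big(\left]t_1,t_n\right]\!\big)\mod c_0(I),\]
whose left-hand side dominates $\sum_{t\in S}\vert F_i(t)\vert$.

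To establish \eqref{eq:ineqsumJnotopen} when $n\ge2$, convexity of $J$ gives $\left]t_1,t_n\right]\subseteq J$, and the basic estimate immediately implies $\sum_{t\in S}\vert F_i(t)\vert\le\vert\mu_i\vert(J)\mod c_0(I)$. For the remaining case $n=1$, where $J$ is assumed to have at least two points, pick $s\in J\setminus\{t_1\}$ and apply \eqref{eq:ineqFits2} to the pair $(s,t_1)$ or $(t_1,s)$ according to the order, again using convexity of $J$ to place the resulting interval inside $J$. The case $n=0$ is trivial.

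For \eqref{eq:ineqsumJopen}, assume $J$ is open, so that Corollary~\ref{thm:corextremities} is available. When $n\ge2$ it provides the endpoint bounds
\[\vert F_i(t_1)\vert\le\vert\mu_i\vert\big([\min K,t_1]\cap J\big)\quad\text{and}\quad\vert F_i(t_n)\vert\le\vert\mu_i\vert\big(\left]t_n,\max K\right]\cap J\big)\mod c_0(I),\]
and the three sets $[\min K,t_1]\cap J$, $\left]t_1,t_n\right]$ and $\left]t_n,\max K\right]\cap J$ form a disjoint partition of $J$: every point of $J$ is comparable with both $t_1$ and $t_n$, and the middle piece sits inside $J$ by convexity. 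Adding the two extremity bounds to the basic estimate yields $2\sum_{t\in S}\vert F_i(t)\vert\le\vert\mu_i\vert(J)\mod c_0(I)$. For $n=1$ one simply adds the two bounds of Corollary~\ref{thm:corextremities} at $t_1$; the case $n=0$ is trivial. The only mild subtlety is this partition argument and the separate treatment of the small cases $n\le1$; this is also what forces the factor $\tfrac12$ to rely on the stronger hypothesis that $J$ be open, rather than merely that $J$ have more than one point.
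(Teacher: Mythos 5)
Your proof is correct and follows essentially the same route as the paper: telescoping the pairwise estimate \eqref{eq:ineqFits2} over consecutive points of $S$ to get the weighted sum bounded by $\vert\mu_i\vert\big(\left]t_1,t_n\right]\big)$, then, in the open case, adding the two extremity bounds of Corollary~\ref{thm:corextremities} and using that the three resulting pieces are disjoint subsets of $J$. The paper dispatches the small cases slightly differently (enlarging a singleton $S$ inside $J$ rather than invoking \eqref{eq:ineqFits2} directly, and noting that a one-point open convex $J$ consists of a right-isolated point), but these are cosmetic variations of the same argument.
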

\begin{proof}
If $J$ is open and has a single point then this point is right-isolated and thus \eqref{eq:ineqsumJopen} holds trivially.
Assume then that $J$ has more than one point, in which case we can also assume that $S$ has more than one point.
Write $S=\{t_0,t_1,\ldots,t_n\}$ with $t_0<t_1<\cdots<t_n$ and $n\ge1$. Using \eqref{eq:ineqFits2} we get
\begin{equation}\label{eq:Ftjtjplus1}
\vert F_i(t_j)\vert+\vert F_i(t_{j+1})\vert\le\vert\mu_i\vert\big(\left]t_j,t_{j+1}\right]\!\big)\mod c_0(I),
\end{equation}
for all $j=0,1,\ldots,n-1$ and adding all the inequalities \eqref{eq:Ftjtjplus1} we obtain
\begin{equation}\label{eq:extreitiesonce}
\vert F_i(t_0)\vert+\vert F_i(t_n)\vert+2\sum_{j=1}^{n-1}\vert F_i(t_j)\vert\le\vert\mu_i\vert\big(\left]t_0,t_n\right]\!\big)\mod c_0(I),
\end{equation}
which proves \eqref{eq:ineqsumJnotopen}. Now if $J$ is open, Corollary~\ref{thm:corextremities} yields
\begin{gather*}
\vert F_i(t_0)\vert\le\vert\mu_i\vert\big([\min K,t_0]\cap J\big)\mod c_0(I),\\
\vert F_i(t_n)\vert\le\vert\mu_i\vert\big(\left]t_n,\max K\right]\cap J\big)\mod c_0(I)
\end{gather*}
and adding these inequalities with \eqref{eq:extreitiesonce} we obtain \eqref{eq:ineqsumJopen}.
\end{proof}

\begin{cor}\label{thm:corc0ell1oncountable}
If $K$ is a compact line and $(F_i)_{i\in I}$ is a weak*-null family in $\NBV(K)$ then $(F_i)_{i\in I}$ is of type $c_0\ell_1$ over every countable subset of $K$.
\end{cor}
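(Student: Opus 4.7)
The plan is to combine the uniform bound from Proposition \ref{thm:weakstarnullboundfinsums} with the $c_0\ell_1$-characterization for countable index sets given in Proposition \ref{thm:propTcountable}. Thus, given a countable $Q\subseteq K$, I want to exhibit a bounded family $(k_i)_{i\in I}$ of nonnegative reals such that $\sum_{t\in S}\vert F_i(t)\vert\le k_i\mod c_0(I)$ for every finite $S\subseteq Q$, and then invoke Proposition \ref{thm:propTcountable} applied to the restricted family $(F_i\vert_Q)_{i\in I}$.

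First I would observe that $K$ is itself a convex open subset of $K$, so Proposition \ref{thm:weakstarnullboundfinsums} (in its sharper form \eqref{eq:ineqsumJopen}) yields, for the measure $\mu_i\in M(K)$ associated to $F_i$,
\[\sum_{t\in S}\vert F_i(t)\vert\le\tfrac12\vert\mu_i\vert(K)=\tfrac12\Vert\mu_i\Vert\mod c_0(I)\]
for every finite $S\subseteq K$, and in particular for every finite $S\subseteq Q$. Setting $k_i=\tfrac12\Vert\mu_i\Vert=\tfrac12\Vert F_i\Vert_{\BV}$, this is exactly the hypothesis of Proposition \ref{thm:propTcountable}, provided $(k_i)_{i\in I}$ is bounded.

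Boundedness of $(k_i)_{i\in I}$ is the only point that requires a brief argument: since $(F_i)_{i\in I}$ is weak*-null, each orbit $(\mu_i(f))_{i\in I}$ lies in $c_0(I)$ and is therefore bounded for every $f\in C(K)$; by the uniform boundedness principle applied in the Banach space $C(K)^*\equiv M(K)$, we obtain $\sup_{i\in I}\Vert\mu_i\Vert<+\infty$. Hence $(k_i)_{i\in I}$ is bounded and Proposition \ref{thm:propTcountable} applies to give that $(F_i\vert_Q)_{i\in I}$ is of type $c_0\ell_1$ bounded by $(k_i)_{i\in I}$, which is what had to be shown.

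There is no real obstacle here, since the substantive estimates were already done in Proposition \ref{thm:weakstarnullboundfinsums} and the countable-index reduction is precisely Proposition \ref{thm:propTcountable}; the only thing to be careful about is that the uniform boundedness of $(\Vert\mu_i\Vert)_{i\in I}$ needs to be invoked so that the pointwise finite-sum estimate delivers a legitimate bounding family in the sense of Definition \ref{thm:defc0ell1}.
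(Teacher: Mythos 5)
Your proof is correct and is essentially the paper's own argument: apply Proposition~\ref{thm:weakstarnullboundfinsums} with $J=K$ to get the finite-sum estimate modulo $c_0(I)$, then feed it into Proposition~\ref{thm:propTcountable}. The extra remark that $\sup_{i\in I}\Vert\mu_i\Vert<+\infty$ (via uniform boundedness) is a legitimate point the paper leaves implicit, but it does not change the route.
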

\begin{proof}
Use Proposition~\ref{thm:propTcountable} and Proposition~\ref{thm:weakstarnullboundfinsums} with $J=K$.
\end{proof}

\begin{prop}\label{thm:countablesufficient}
Let $\phi:K\to L$ be a continuous increasing surjection between compact lines and $(F_i)_{i\in I}$ be a weak*-null family in $\NBV(L)$.
If the set
\begin{equation}\label{eq:tQphinotc0I}
\big\{t\in Q(\phi):\big(F_i(t)\big)_{i\in I}\not\in c_0(I)\big\}
\end{equation}
is countable then $(F_i)_{i\in I}$ extends through $\phi$.
\end{prop}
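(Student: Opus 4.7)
My plan is to reduce this to Theorem~\ref{thm:c0ell1equivextend} by showing that $(F_i)_{i\in I}$ is of type $c_0\ell_1$ over $Q(\phi)$. The key observation is that $Q(\phi)$ splits naturally into two pieces, each of which is easy to handle with tools already established.

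Set $Q=Q(\phi)$ and let $C=\big\{t\in Q:\big(F_i(t)\big)_{i\in I}\notin c_0(I)\big\}$, which is countable by hypothesis. I would first note that $(F_i)_{i\in I}$ is trivially of type $c_0\ell_1$ over the complement $Q\setminus C$: by definition of $C$ we have $\lim_{i\in I}F_i(t)=0$ for all $t\in Q\setminus C$, so taking $a_{i,t}=F_i(t)$ and $b_{i,t}=0$ in \eqref{eq:Fitaitbit} verifies the definition. This is exactly the observation made earlier about the set \eqref{eq:trivialset} being trivially handled.

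Next, I would handle $C$ using Corollary~\ref{thm:corc0ell1oncountable}: since $(F_i)_{i\in I}$ is weak*-null in $\NBV(L)$ and $C$ is a countable subset of the compact line $L$, the family $(F_i)_{i\in I}$ is of type $c_0\ell_1$ over $C$.

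Finally, since the collection of subsets of $Q$ over which $(F_i)_{i\in I}$ is of type $c_0\ell_1$ is an ideal of the Boolean algebra of subsets of $Q$ (as observed immediately after Definition~\ref{thm:defc0ell1}), and since $Q=(Q\setminus C)\cup C$ is a finite union of sets over which $(F_i)_{i\in I}$ is of type $c_0\ell_1$, the family $(F_i)_{i\in I}$ is of type $c_0\ell_1$ over $Q=Q(\phi)$. Applying Theorem~\ref{thm:c0ell1equivextend} then yields that $(F_i)_{i\in I}$ extends through $\phi$. There is no serious obstacle here — the proposition is essentially a direct combination of Theorem~\ref{thm:c0ell1equivextend}, Corollary~\ref{thm:corc0ell1oncountable}, and the trivial observation about \eqref{eq:trivialset}.
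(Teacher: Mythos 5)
Your proof is correct and follows essentially the same route as the paper, which simply cites Corollary~\ref{thm:corc0ell1oncountable} and Theorem~\ref{thm:c0ell1equivextend}; you have merely spelled out the implicit decomposition of $Q(\phi)$ into the countable exceptional set and the set \eqref{eq:trivialset}, together with the ideal property of the collection of sets over which the family is of type $c_0\ell_1$.
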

\begin{proof}
It follows from Corollary~\ref{thm:corc0ell1oncountable} and Theorem~\ref{thm:c0ell1equivextend}.
\end{proof}

When $I$ is countable, Proposition~\ref{thm:countablesufficient} and the combination of Proposition~\ref{thm:atmostcardI} with Theorem~\ref{thm:c0ell1equivextend} jointly recover the extension criterion presented in \cite[Proposition~2.5]{CTlines}, i.e., they yield that a weak*-null family $(F_i)_{i\in I}$ in $\NBV(L)$ extends through a continuous increasing surjection $\phi:K\to L$ if and only if the set \eqref{eq:tQphinotc0I} is countable. This does not hold in general when $I$ is uncountable, as the next example shows.
\begin{example}\label{exa:Victor}
Let $L_0$ be an arbitrary uncountable compact line and consider the product $L=L_0\times[0,1]$ ordered lexicographically, so that $L$ is also a compact line. Set $I=L_0\times\omega$ and for each $i=(s,n)\in I$ we let $F_i\in\NBV(L)$ be the characteristic function of the interval $\mathopen\big[(s,0),\big(s,\frac1{n+1}\big)\mathclose\big[$. We have that $(F_i)_{i\in I}$ is a bounded family in $\NBV(L)$ and using Lemma~\ref{thm:weaknullGicrit} with $\phi$ equal to the first projection of $L$ onto $L_0$ one easily obtains that $(F_i)_{i\in I}$ is weak*-null. The set
\[\big\{t\in L:\big(F_i(t)\big)_{i\in I}\not\in c_0(I)\big\}\]
is equal to $L_0\times\{0\}$ and thus \eqref{eq:tQphinotc0I} is uncountable if the continuous increasing surjection $\phi:K\to L$ is chosen
such that $Q(\phi)\cap\big(L_0\times\{0\}\big)$ is uncountable. Nevertheless, $(F_i)_{i\in I}$ is of type $c_0\ell_1$ over $L$, which can be seen for instance by noting that $\Der\!\big(L_0\times\{0\},L\big)=\emptyset$ and by using Proposition~\ref{thm:finiteheight}. Hence $(F_i)_{i\in I}$ extends through any continuous increasing surjection $\phi:K\to L$ by Theorem~\ref{thm:c0ell1equivextend}.
\end{example}

As we have observed earlier, the points in a compact line that are not internal are irrelevant when checking that a weak*-null family is of type $c_0\ell_1$ over a certain subset. It turns out that points with uncountable right character are also irrelevant. Here is the definition we need.
\begin{defin}
Given a compact line $K$, we define the {\em right character\/} (resp., {\em left character}) of a point $t$ in $K$ to be equal to $\omega$ if $t$ is right-isolated (resp., left-isolated) and to be equal to the least cardinal of a subset $A$ of $\left]t,\max K\right]$ with $\inf A=t$
(resp., of a subset $A$ of $\left[\min K,t\right[$ with $\sup A=t$), otherwise.
\end{defin}
It is easy to see that the character of a point in a compact line is precisely the maximum between its left character and its right character.

\begin{lem}\label{thm:uncountablerightcharacter}
Let $K$ be a compact line and $(F_i)_{i\in I}$ be a weak*-null family in $\NBV(K)$. If $t\in K$ has uncountable right character, then $\lim_{i\in I}F_i(t)=0$.
\end{lem}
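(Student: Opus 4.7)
The plan is to argue by contradiction: assume there exist $\varepsilon>0$ and distinct indices $i_1,i_2,\ldots\in I$ with $\vert F_{i_n}(t)\vert\ge\varepsilon$ for every $n$, and write $\mu_{i_n}\in M(K)$ for the measure corresponding to $F_{i_n}$ under Proposition~\ref{thm:propNBVK}. For each $n$, I would first pick $s_n>t$ with $\vert\mu_{i_n}\vert\big(\left]t,s_n\right[\big)<\varepsilon/4$. This is possible because $\vert\mu_{i_n}\vert$ is a finite regular Borel measure on the compact Hausdorff space $K$: by inner regularity applied to the open set $\left]t,\max K\right]$, there is a compact subset $C\subset\left]t,\max K\right]$ with $\vert\mu_{i_n}\vert\big(\left]t,\max K\right]\setminus C\big)<\varepsilon/4$; since $C$ is closed in the compact line $K$, its minimum $s_n:=\min C$ exists, satisfies $s_n>t$, and $\left]t,s_n\right[\cap C=\emptyset$.

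Next I would bring in the hypothesis on the right character. Because $\{s_n:n\ge1\}$ is a countable subset of $\left]t,\max K\right]$ and $t$ has uncountable right character, the infimum $s_*:=\inf_n s_n$ must satisfy $s_*>t$; moreover, $t$ is not right-isolated (otherwise its right character would be $\omega$), so the interval $\left]t,s_*\right[$ is nonempty, and I can choose $u$ in it. Since $u<s_*\le s_n$ for every $n$, one has $\left]t,u\right[\subset\left]t,s_n\right[$, and therefore $\vert\mu_{i_n}\vert\big(\left]t,u\right[\big)<\varepsilon/4$ for every $n$. To conclude, I would invoke inequality~\eqref{eq:ineqFits1} with $s=u$: it says that the set $\bigl\{i\in I:\vert F_i(t)\vert\ge\vert\mu_i\vert\big(\left]t,u\right[\big)+\varepsilon/2\bigr\}$ is finite, which together with $\vert F_{i_n}(t)\vert\ge\varepsilon$ forces $\vert\mu_{i_n}\vert\big(\left]t,u\right[\big)>\varepsilon/2$ for all but finitely many $n$, contradicting the bound $<\varepsilon/4$.

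The crux of the argument is the interplay between the countable sequence $(s_n)$ produced by regularity and the uncountable right character of $t$: the latter is precisely what allows a single point $u$ to be placed strictly below every $s_n$, after which \eqref{eq:ineqFits1} does the rest. The main obstacle is the regularity step, because there is no a priori topological control on how close $s_n$ can be made to $t$; different measures $\vert\mu_{i_n}\vert$ may force very different choices of $s_n$, and only the uncountable right character of $t$ rescues the argument.
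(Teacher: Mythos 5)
Your proof is correct and follows essentially the same route as the paper's: inner regularity of each $\vert\mu_{i_n}\vert$ produces the points $s_n$, the uncountable right character forces $\inf_n s_n>t$, and the final contradiction comes from testing against a Urysohn-type function, which you obtain in packaged form by citing \eqref{eq:ineqFits1} (proved earlier in the paper, so there is no circularity) instead of writing out the integral estimate directly as the paper does. The only loose end is the trivial case where the compact set $C$ produced by inner regularity is empty, in which case $\left]t,\max K\right]$ itself already has $\vert\mu_{i_n}\vert$-measure less than $\varepsilon/4$ and any $s_n>t$ works (such a point exists since $t\ne\max K$).
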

\begin{proof}
Assume by contradiction that $\vert F_i(t)\vert\ge\varepsilon$ for some $\varepsilon>0$ and all $i$ in some infinite countable subset $I_0$ of $I$.
By the regularity of the measure $\mu_i\in M(K)$ associated to $F_i$, for each $i\in I_0$ we can find $t_i\in K$ with $t_i>t$ and
$\vert\mu_i\vert\big(\left]t,t_i\right]\!\big)<\frac\varepsilon2$. Since $t$ has uncountable right character, we have that $s=\inf\{t_i:i\in I_0\}$ is greater than $t$. Now pick a continuous function $f:K\to[0,1]$ which equals $1$ on $[\min K,t]$ and equals zero on $[s,\max K]$ and note that
\[\Bigg\vert\int_K f\,\dd\mu_i\Bigg\vert=\Bigg\vert F_i(t)+\int_{\left]t,s\right[}f\,\dd\mu_i\Bigg\vert\ge\vert F_i(t)\vert-\vert\mu_i\vert\big(\left]t,t_i\right]\big)>\frac\varepsilon2\]
for all $i\in I_0$, contradicting the fact that $\lim_{i\in I}\int_Kf\,\dd\mu_i=0$.
\end{proof}

In Proposition~\ref{thm:propBicountable} below we will obtain a useful equivalence for the property of being of type $c_0\ell_1$ for families of maps $(F_i)_{i\in I}$ satisfying
an estimate like \eqref{eq:finitesummodc0} and for which at most countably many maps $F_i$ are nonzero on any given point. As we saw in Proposition~\ref{thm:weakstarnullboundfinsums}, estimates like \eqref{eq:finitesummodc0} hold for weak*-null families in a compact line and we show now that the countability assumption in the statement of Proposition~\ref{thm:propBicountable} also holds for weak*-null families.
\begin{lem}\label{thm:Fitnonzerocountable}
If $K$ is a compact line and $(F_i)_{i\in I}$ is a weak*-null family in $\NBV(K)$ then the set $\big\{i\in I:F_i(t)\ne0\big\}$ is countable for any
$t\in K$.
\end{lem}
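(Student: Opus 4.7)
The plan is to fix $t\in K$ and establish the stronger assertion that for every $\varepsilon>0$ the set $A_\varepsilon=\{i\in I:|F_i(t)|\ge\varepsilon\}$ is countable; the conclusion follows from this by writing $\{i\in I:F_i(t)\ne0\}=\bigcup_{k\ge1}A_{1/k}$. I would then split into three cases according to the right character of $t$. When $t$ is right-isolated, the characteristic function of $[\min K,t]$ lies in $C(K)$, so weak*-nullity gives $\lim_{i\in I}F_i(t)=0$ and $A_\varepsilon$ is even finite; when $t$ has uncountable right character, Lemma~\ref{thm:uncountablerightcharacter} already delivers the same conclusion.

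The substantive case is when $t$ is not right-isolated and has countable right character. My idea is to approximate $\chilow{[\min K,t]}$ from above by continuous functions whose integration error against $|\mu_i|$ is controlled by the mass of $|\mu_i|$ on intervals shrinking to the right of $t$. By the countable right character assumption one may pick a strictly decreasing sequence $(s_n)_{n\ge1}$ in $\left]t,\max K\right]$ with $\inf_n s_n=t$; for each $n$, by normality of $K$, choose a continuous $f_n:K\to[0,1]$ equal to $1$ on $[\min K,t]$ and $0$ on $[s_n,\max K]$. Applying the weak*-null hypothesis to each fixed $f_n$, the set $I_n=\big\{i\in I:\big|\int_K f_n\,\dd\mu_i\big|\ge\varepsilon/2\big\}$ is finite, so $I_\infty=\bigcup_{n\ge1}I_n$ is countable.

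The argument is then completed by the estimate
\[\left|F_i(t)-\int_K f_n\,\dd\mu_i\right|=\left|\int_K\big(\chilow{[\min K,t]}-f_n\big)\,\dd\mu_i\right|\le|\mu_i|\big(\left]t,s_n\right[\!\big),\]
valid for every $i$ and every $n$. For $i\notin I_\infty$ this yields $|F_i(t)|<\varepsilon/2+|\mu_i|\big(\left]t,s_n\right[\!\big)$ for each $n$; the open intervals $\left]t,s_n\right[$ form a decreasing sequence with empty intersection (using $\inf_n s_n=t$), and $|\mu_i|$ is a finite measure, so continuity from above gives $|\mu_i|\big(\left]t,s_n\right[\!\big)\to0$ and choosing $n$ large enough depending on $i$ forces $|F_i(t)|<\varepsilon$. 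Hence $A_\varepsilon\subseteq I_\infty$ is countable. The main subtlety to watch for is that the index $n$ witnessing $|F_i(t)|<\varepsilon$ depends on $i$, but this causes no trouble since only the existence of such an $n$ for each fixed $i\notin I_\infty$ is required to conclude $A_\varepsilon\subseteq I_\infty$; no uniform choice or pigeonholing over $n$ is needed. Structurally this mirrors the proof of Lemma~\ref{thm:uncountablerightcharacter}, the new ingredient being that countability of the right character lets one absorb the error by shrinking $n$ on a per-$i$ basis rather than uniformly.
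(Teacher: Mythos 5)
Your proof is correct and follows essentially the same route as the paper: the same three-way case split on the right character of $t$, and in the main case the same approximation of $\chilow{[\min K,t]}$ by continuous functions $f_n$ vanishing beyond a decreasing sequence $s_n\downarrow t$, with weak*-nullity applied to each fixed $f_n$. The only cosmetic difference is that the paper invokes the Dominated Convergence Theorem to get $F_i(t)=\lim_n\int_Kf_n\,\dd\mu_i$ and then uses that each family $\big(\int_Kf_n\,\dd\mu_i\big)_{i\in I}$ has countable support, whereas you prove the same convergence by hand via the estimate $\vert\mu_i\vert\big(\left]t,s_n\right[\big)\to0$ and an $\varepsilon$-stratification.
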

\begin{proof}
The result is trivial if $t$ is right-isolated and it follows directly from Lemma~\ref{thm:uncountablerightcharacter} if $t$ has uncountable right character. We therefore assume that $t$ is the infimum of a nonempty countable subset of $\left]t,\max K\right]$. In this case, there exists a decreasing sequence $(t_n)_{n\ge1}$ in $\left]t,\max K\right]$ converging to $t$. For each $n\ge1$, let $f_n:K\to[0,1]$ be a continuous function that equals $1$ on $[\min K,t]$ and equals zero on $[t_n,\max K]$. The sequence $(f_n)_{n\ge1}$ thus converges pointwise to the characteristic function of $[\min K,t]$ and it follows from the Dominated Convergence Theorem that $F_i(t)=\lim_{n\to+\infty}\int_Kf_n\,\dd\mu_i$, for all $i\in I$.
Hence $\big\{i\in I:F_i(t)\ne0\big\}$ is contained in the countable set $\bigcup_{n=1}^\infty\big\{i\in I:\int_Kf_n\,\dd\mu_i\ne0\big\}$.
\end{proof}

The following result is needed in the proof of Proposition~\ref{thm:propBicountable}. Recall that a family $(B^t)_{t\in Q}$ of sets is called {\em point countable\/} if for every $i$ the set $\big\{t\in Q:i\in B^t\big\}$ is countable.
\begin{lem}\label{thm:countablepartition}
If $(B^t)_{t\in Q}$ is a point countable family of countable sets then there exists a partition $Q=\bigcup_{\lambda\in\Lambda}Q_\lambda$ of $Q$ into countable sets such that $B^t\cap B^s=\emptyset$ for all $t\in Q_\lambda$, $s\in Q_\mu$ and all $\lambda,\mu\in\Lambda$ with $\lambda\ne\mu$.
\end{lem}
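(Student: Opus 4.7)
The plan is to partition $Q$ into the connected components of a natural graph. Define a graph structure on $Q$ by declaring $t,s\in Q$ adjacent if $B^t\cap B^s\neq\emptyset$, and let $\sim$ be the equivalence relation on $Q$ generated by this adjacency (that is, $t\sim s$ iff there exists a finite chain $t=t_0,t_1,\ldots,t_n=s$ in $Q$ with $B^{t_k}\cap B^{t_{k+1}}\neq\emptyset$ for $k=0,1,\ldots,n-1$). Take $\{Q_\lambda\}_{\lambda\in\Lambda}$ to be the partition of $Q$ into equivalence classes of $\sim$.

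The disjointness property is immediate from the construction: if $t\in Q_\lambda$, $s\in Q_\mu$ and $B^t\cap B^s\neq\emptyset$, then $t\sim s$, forcing $\lambda=\mu$.

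The only real work is to check that each equivalence class $Q_\lambda$ is countable. The key observation is that every $t\in Q$ has countable adjacency set: the set of neighbors of $t$ is
\[\bigl\{s\in Q:B^t\cap B^s\neq\emptyset\bigr\}=\bigcup_{i\in B^t}\bigl\{s\in Q:i\in B^s\bigr\},\]
which is a countable union (since $B^t$ is countable by hypothesis) of countable sets (by the point countability assumption), hence countable. Now for a fixed $t\in Q$, define recursively $N_0(t)=\{t\}$ and $N_{n+1}(t)=N_n(t)\cup\bigcup_{u\in N_n(t)}\{s\in Q:B^u\cap B^s\neq\emptyset\}$; by induction each $N_n(t)$ is countable, and the equivalence class of $t$ equals $\bigcup_{n\ge0}N_n(t)$, which is therefore countable.

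No step is really an obstacle here; the only mild subtlety is confirming that the adjacency set of a single vertex is countable, which uses both hypotheses (countability of each $B^t$ and point countability of the family) in an essential way. Once that is in hand, the connected-component argument is routine.
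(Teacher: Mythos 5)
Your proof is correct and follows essentially the same route as the paper: both take the transitive closure of the relation ``$B^t\cap B^s\ne\emptyset$'', observe that each point has a countable set of neighbors because it equals $\bigcup_{i\in B^t}\{s\in Q:i\in B^s\}$, and conclude that each equivalence class is a countable increasing union of countable sets. No issues.
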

\begin{proof}
Define a reflexive and symmetric binary relation $R$ on $Q$ by stating that $(t,s)\in R$ if and only if either $t=s$ or $B^t\cap B^s\ne\emptyset$.
For each $t\in Q$, the set
\[R(t)=\big\{s\in Q:(t,s)\in R\big\}=\{t\}\cup\bigcup_{i\in B^t}\big\{s\in Q:i\in B^s\big\}\]
is countable. Let $\sim$ denote the transitive closure of $R$, i.e., $\sim$ is the binary relation on $Q$ defined by $t\sim s$ if and only if there exists $n\ge2$ and a sequence $(t_i)_{i=1}^n$ in $Q$ such that $t_1=t$, $t_n=s$ and $(t_i,t_{i+1})\in R$, for all $i=1,\ldots,n-1$. Clearly, $\sim$ is an equivalence relation on $Q$. Moreover, for each $t\in Q$, the equivalence class of $t$ with respect to $\sim$ is countable since such equivalence class is equal to the union $\bigcup_{n\in\omega}A_n$, where $A_0=\{t\}$ and $A_{n+1}=\bigcup_{s\in A_n}R(s)$, for all $n\ge0$. The desired partition of $Q$ is simply the partition induced by $\sim$.
\end{proof}

\goodbreak

\begin{prop}\label{thm:propBicountable}
Let $Q$ be a set, $(F_i)_{i\in I}$ be a family of real-valued maps defined on $Q$ and $(k_i)_{i\in I}$ be a bounded family of nonnegative real numbers.
Assume that $\sum_{t\in S}\vert F_i(t)\vert\le k_i\mod c_0(I)$, for every finite subset $S$ of $Q$ and that the set
$\big\{i\in I:F_i(t)\ne0\big\}$ is countable, for all $t\in Q$. The following conditions are equivalent:
\begin{itemize}
\item[(a)] $(F_i)_{i\in I}$ is of type $c_0\ell_1$;
\item[(b)] there exist disjoint sets $A$ and $B$ with $I\times Q=A\cup B$ such that $\lim_{i\in I}F_i(t)\chilow A(i,t)=0$ for all $t\in Q$ and
the set
\[B_i=\big\{t\in Q:(i,t)\in B\big\}\]
is countable, for all $i\in I$;
\item[(c)] $(F_i)_{i\in I}$ is of type $c_0\ell_1$ bounded by $(k_i)_{i\in I}$.
\end{itemize}
\end{prop}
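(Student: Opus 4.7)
My plan is to prove the chain of implications $(c) \Rightarrow (a) \Rightarrow (b) \Rightarrow (c)$. The first implication is immediate from Definition~\ref{thm:defc0ell1}, since $\sum_{t\in Q}\vert b_{i,t}\vert\le k_i$ implies that $(k_i)_{i\in I}$ is a bound witness.

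For $(a) \Rightarrow (b)$, I will take families $(a_{i,t})$, $(b_{i,t})$ witnessing that $(F_i)_{i\in I}$ is of type $c_0\ell_1$ and set $B=\{(i,t)\in I\times Q:b_{i,t}\ne0\}$, $A=(I\times Q)\setminus B$. The summability $\sum_{t\in Q}\vert b_{i,t}\vert<+\infty$ makes $B_i=\{t:b_{i,t}\ne0\}$ countable, and for every $(i,t)$ one checks that $\vert F_i(t)\chilow{A}(i,t)\vert\le\vert a_{i,t}\vert$ (whenever $(i,t)\in A$ we have $b_{i,t}=0$, hence $F_i(t)=a_{i,t}$; otherwise the left side is zero), yielding the required pointwise limit.

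The main work lies in $(b) \Rightarrow (c)$, where the idea is to reduce to the countable case handled by Proposition~\ref{thm:propTcountable} via an appropriate partition of $I$. First I will observe that one may replace $B$ with $B\cap\{(i,t):F_i(t)\ne0\}$ without affecting the hypotheses, so that $\{i:(i,t)\in B\}$ is contained in the countable set $\{i:F_i(t)\ne0\}$. Then I will apply Lemma~\ref{thm:countablepartition} (with the roles of $I$ and $Q$ interchanged) to the point countable family $(B_i)_{i\in I}$ of countable subsets of $Q$, obtaining a partition $I=\bigcup_{\lambda\in\Lambda}I_\lambda$ into countable classes such that $B_i\cap B_j=\emptyset$ for $i,j$ in distinct classes. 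Setting $Q_\lambda=\bigcup_{i\in I_\lambda}B_i$ produces pairwise disjoint countable subsets of $Q$. Since the hypothesis $\sum_{t\in S}\vert F_i(t)\vert\le k_i\mod c_0(I)$ trivially restricts to the analogous bound in $c_0(I_\lambda)$, Proposition~\ref{thm:propTcountable} applied on the countable set $Q_\lambda$ to the family $(F_i\vert_{Q_\lambda})_{i\in I_\lambda}$ yields a local decomposition $F_i(t)=\tilde a_{i,t}+\tilde b_{i,t}$ with $\lim_{i\in I_\lambda}\tilde a_{i,t}=0$ for each $t\in Q_\lambda$ and $\sum_{t\in Q_\lambda}\vert\tilde b_{i,t}\vert\le k_i$. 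The global decomposition is then obtained by setting $a_{i,t}=\tilde a_{i,t}$, $b_{i,t}=\tilde b_{i,t}$ whenever $i\in I_\lambda$ and $t\in Q_\lambda$, and $a_{i,t}=F_i(t)$, $b_{i,t}=0$ otherwise; this gives $\sum_{t\in Q}\vert b_{i,t}\vert\le k_i$ automatically.

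The step I expect to require the most care is verifying that $\lim_{i\in I}a_{i,t}=0$ for each fixed $t\in Q$. There is at most one index $\lambda_t\in\Lambda$ with $t\in Q_{\lambda_t}$: for $i$ in the countable class $I_{\lambda_t}$ the convergence $\tilde a_{i,t}\to0$ in $c_0(I_{\lambda_t})$ handles the values $a_{i,t}=\tilde a_{i,t}$, while for $i\notin I_{\lambda_t}$ the disjointness of the $Q_\lambda$'s forces $t\notin B_i$, placing $(i,t)$ in $A$ and making $a_{i,t}=F_i(t)=F_i(t)\chilow{A}(i,t)$, which tends to zero along $I$ by assumption (b). The genuine obstacle is arranging these two regimes to cohabit; the disjointness condition delivered by Lemma~\ref{thm:countablepartition} is exactly what makes the $A$-hypothesis control everything outside the countable class $I_{\lambda_t}$, while the countable-case proposition controls everything inside it.
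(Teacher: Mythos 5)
Your proof is correct, and all three implications rest on the same ingredients as the paper's: the trivial (c)$\Rightarrow$(a), the decomposition-to-indicator argument for (a)$\Rightarrow$(b), and, for (b)$\Rightarrow$(c), the reduction to $F_i(t)\ne0$ on $B$ followed by Lemma~\ref{thm:countablepartition} and Proposition~\ref{thm:propTcountable}. The one genuine difference is that you apply the partition lemma to the transposed family: the paper partitions $Q$ into countable sets $Q_\lambda$ using the point countable family $\big(B^t\big)_{t\in Q}$ of subsets of $I$ and then invokes Proposition~\ref{thm:propTcountable} over each $Q_\lambda$ with the \emph{full} index set $I$, whereas you partition $I$ into countable classes $I_\lambda$ using $(B_i)_{i\in I}$ and work with the countable index set $I_\lambda$ over the countable set $Q_\lambda=\bigcup_{i\in I_\lambda}B_i$. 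The bookkeeping shifts accordingly: in the paper's version the bound $\sum_{t\in Q}\vert b'_{i,t}\vert\le k_i$ is the step that needs the partition (each $B_i$ lies in a single $Q_\lambda$), while the limit is immediate from the mixed decomposition $a'_{i,t}=F_i(t)\chilow{A}(i,t)+a_{i,t}\chilow{B}(i,t)$; in your version the bound is automatic, and the work goes into splitting the limit over $I$ into the countable class $I_{\lambda_t}$ (controlled by Proposition~\ref{thm:propTcountable}) and its complement, where disjointness of the $Q_\lambda$'s forces $(i,t)\in A$ so that hypothesis (b) applies. You handle that split correctly, including the degenerate case $t\notin\bigcup_\lambda Q_\lambda$, so the two arguments are of essentially equal difficulty and neither buys anything substantial over the other.
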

\begin{proof}
Assuming (a), write $F_i(t)=a_{i,t}+b_{i,t}$ with $\lim_{i\in I}a_{i,t}=0$ for all $t\in Q$ and $\sup_{i\in I}\sum_{t\in Q}\vert b_{i,t}\vert<+\infty$ and define $A$ and $B$ by setting
\[B=\big\{(i,t)\in I\times Q:b_{i,t}\ne0\big\}\]
and $A=(I\times Q)\setminus B$. We have that $B_i$ is countable for all $i\in I$ and $\lim_{i\in I}F_i(t)\chilow A(i,t)=\lim_{i\in I}a_{i,t}\chilow A(i,t)=0$, for all $t\in Q$, proving (b). To conclude the proof of the proposition, we now assume the existence of sets $A$ and $B$ as in (b) and we
prove (c). By moving all pairs $(i,t)$ with $F_i(t)=0$ from $B$ to $A$, we can assume that $F_i(t)\ne0$ for all $(i,t)\in B$. With this assumption, we have that the set $B^t=\big\{i\in I:(i,t)\in B\big\}$ is countable, for all $t\in Q$. Moreover, the fact that each $B_i$ is countable means that the family $(B^t)_{t\in Q}$ is point countable and thus Lemma~\ref{thm:countablepartition} yields a partition $Q=\bigcup_{\lambda\in\Lambda}Q_\lambda$ of $Q$ into countable sets $Q_\lambda$ such that $B^t$ is disjoint from $B^s$ whenever $t\in Q_\lambda$, $s\in Q_\mu$ and $\lambda,\mu\in\Lambda$ are distinct. Proposition~\ref{thm:propTcountable} then yields that $(F_i)_{i\in I}$ is of type $c_0\ell_1$ bounded by $(k_i)_{i\in I}$ over $Q_\lambda$, for all $\lambda\in\Lambda$. As $(Q_\lambda)_{\lambda\in\Lambda}$ is a partition of $Q$, we obtain families of real numbers $(a_{i,t})_{i\in I,t\in Q}$,
$(b_{i,t})_{i\in I,t\in Q}$ with $F_i(t)=a_{i,t}+b_{i,t}$, for all $i\in I$, $t\in Q$, $\lim_{i\in I}a_{i,t}=0$, for all $t\in Q$ and
$\sum_{t\in Q_\lambda}\vert b_{i,t}\vert\le k_i$, for all $i\in I$ and all $\lambda\in\Lambda$. To establish that $(F_i)_{i\in I}$ is of type $c_0\ell_1$ bounded by $(k_i)_{i\in I}$, we define a new decomposition
\[F_i(t)=a'_{i,t}+b'_{i,t},\quad i\in I,\ t\in Q,\]
by setting $a'_{i,t}=F_i(t)\chilow A(i,t)+a_{i,t}\chilow B(i,t)$ and $b'_{i,t}=b_{i,t}\chilow B(i,t)$.
Clearly $\lim_{i\in I}a'_{i,t}=0$, for all $t\in Q$ and for each $i\in I$ we have that $B_i$ is contained in some $Q_\lambda$ and hence
$\sum_{t\in Q}\vert b'_{i,t}\vert=\sum_{t\in B_i}\vert b_{i,t}\vert\le\sum_{t\in Q_\lambda}\vert b_{i,t}\vert\le k_i$.
\end{proof}

\begin{cor}\label{thm:corsigmaideal}
Under the assumptions of Proposition~\ref{thm:propBicountable}, the collection of subsets of $Q$ over which the family $(F_i)_{i\in I}$ is of type $c_0\ell_1$ is closed under countable unions and it is therefore a $\sigma$-ideal of the Boolean algebra of subsets of $Q$. In particular, if $K$ is a compact line and $(F_i)_{i\in I}$ is a weak*-null family in $\NBV(K)$ then the collection of subsets of $K$ over which $(F_i)_{i\in I}$ is of type $c_0\ell_1$ is a $\sigma$-ideal of the Boolean algebra of subsets of $K$.
\end{cor}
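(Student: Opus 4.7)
The plan is to use the equivalence (a)$\Leftrightarrow$(b) from Proposition~\ref{thm:propBicountable}, which recasts the property of being of type $c_0\ell_1$ as a condition on a partition $I\times Q=A\cup B$; conditions of this shape combine well under countable disjoint unions. The ideal property (closure under finite unions and subsets) was already observed earlier in the paper, so I only need to show closure under countable unions.

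First I would let $(S_n)_{n\ge 1}$ be a sequence of subsets of $Q$ over which $(F_i)_{i\in I}$ is of type $c_0\ell_1$, and replace $S_n$ by $S_n\setminus\bigcup_{m<n}S_m$. Since the collection is already an ideal, the new $S_n$ are still in the collection, and now they are pairwise disjoint with the same union $S:=\bigcup_n S_n$. For each $n$, apply (b) to the family $(F_i|_{S_n})_{i\in I}$ (note that the hypotheses of Proposition~\ref{thm:propBicountable} are inherited by any subset of $Q$) to obtain disjoint sets $A_n,B_n\subset I\times S_n$ with $A_n\cup B_n=I\times S_n$, with $\lim_{i\in I}F_i(t)\chilow{A_n}(i,t)=0$ for every $t\in S_n$, and with $(B_n)_i=\{t\in S_n:(i,t)\in B_n\}$ countable for every $i\in I$.

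Next I would glue: set $A=\bigcup_n A_n$ and $B=\bigcup_n B_n$. Disjointness of the $S_n$ gives that $A,B$ are disjoint and partition $I\times S$. For each $i\in I$, $B_i=\bigcup_n(B_n)_i$ is a countable union of countable sets, hence countable. For each $t\in S$ there is a unique $n$ with $t\in S_n$, so $\chilow{A}(i,t)=\chilow{A_n}(i,t)$ and consequently $\lim_{i\in I}F_i(t)\chilow{A}(i,t)=0$. Condition (b) of Proposition~\ref{thm:propBicountable} is therefore satisfied for $(F_i|_S)_{i\in I}$, and the equivalence (b)$\Rightarrow$(a) (in fact (c)) yields that $(F_i)_{i\in I}$ is of type $c_0\ell_1$ over $S$.

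For the second assertion, I would simply verify that weak*-null families $(F_i)_{i\in I}$ in $\NBV(K)$ meet the hypotheses of Proposition~\ref{thm:propBicountable}: taking $k_i=\Vert\mu_i\Vert$ (finite and bounded, as weak*-null families are bounded), the estimate $\sum_{t\in S'}\vert F_i(t)\vert\le k_i\mod c_0(I)$ for every finite $S'\subset K$ is \eqref{eq:ineqsumJnotopen} of Proposition~\ref{thm:weakstarnullboundfinsums} applied with $J=K$ (the case $\#K=1$ being trivial), while the countability of $\{i\in I:F_i(t)\ne 0\}$ for each $t\in K$ is exactly Lemma~\ref{thm:Fitnonzerocountable}. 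I do not anticipate any real obstacle: the point of Proposition~\ref{thm:propBicountable}(b) is precisely that it decouples the pointwise null behavior on $A$ from a pointwise-countability condition on $B$, and both of these behave correctly under countable disjoint unions.
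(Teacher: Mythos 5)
Your proposal is correct and follows essentially the same route as the paper: reduce to disjoint countable unions via the ideal property, apply the equivalence (a)$\Leftrightarrow$(b) of Proposition~\ref{thm:propBicountable} to glue the decompositions, and invoke Proposition~\ref{thm:weakstarnullboundfinsums} and Lemma~\ref{thm:Fitnonzerocountable} for the weak*-null case. The paper states this tersely; your write-up just fills in the (correct) details.
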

\begin{proof}
For the first part, note that it is sufficient to establish closure under disjoint countable unions and use the equivalence between (a) and (b) of Proposition~\ref{thm:propBicountable}. For the second, note that the assumptions of Proposition~\ref{thm:propBicountable} are valid for weak*-null families due to
Proposition~\ref{thm:weakstarnullboundfinsums} and Lemma~\ref{thm:Fitnonzerocountable}.
\end{proof}

\begin{cor}\label{thm:corc0l1bound}
Let $K$ be a compact line, $(F_i)_{i\in I}$ be a weak*-null family in $\NBV(K)$, $J$ be a convex subset of $K$ and $Q$ be a subset of $J$.
For each $i\in I$, denote by $\mu_i\in M(K)$ the measure associated to $F_i$. If $(F_i)_{i\in I}$ is of type $c_0\ell_1$ over $Q$ and $J$ has more than one point then $(F_i)_{i\in I}$ is of type $c_0\ell_1$ over $Q$ bounded by $\big(\vert\mu_i\vert(J)\big)_{i\in I}$.
Moreover, if $(F_i)_{i\in I}$ is of type $c_0\ell_1$ over $Q$ and $J$ is open then $(F_i)_{i\in I}$ is of type $c_0\ell_1$ over $Q$ bounded by $\big(\frac12\vert\mu_i\vert(J)\big)_{i\in I}$.
\end{cor}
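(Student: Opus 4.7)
The plan is to apply Proposition~\ref{thm:propBicountable} to the restricted family $(F_i|_Q)_{i\in I}$, taking as the bounding family either $k_i=\vert\mu_i\vert(J)$ (in the first case) or $k_i=\tfrac12\vert\mu_i\vert(J)$ (in the second case). To do so, I need to verify the three hypotheses of that proposition: (i) $(k_i)_{i\in I}$ is a bounded family of nonnegative reals; (ii) $\sum_{t\in S}\vert F_i(t)\vert\le k_i\mod c_0(I)$ for every finite $S\subset Q$; and (iii) $\{i\in I:F_i(t)\ne0\}$ is countable for every $t\in Q$.

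For (i), the weak*-null family $(F_i)_{i\in I}$ is norm-bounded in $\NBV(K)$ by the Banach--Steinhaus theorem, and under the isometry of Proposition~\ref{thm:propNBVK} this yields a uniform bound on $\Vert\mu_i\Vert$ and hence on $\vert\mu_i\vert(J)$. For (ii), since any finite $S\subset Q$ is also a finite subset of the convex set $J$, the two estimates of Proposition~\ref{thm:weakstarnullboundfinsums} deliver exactly $\sum_{t\in S}\vert F_i(t)\vert\le\vert\mu_i\vert(J)\mod c_0(I)$ when $J$ has more than one point and $\sum_{t\in S}\vert F_i(t)\vert\le\tfrac12\vert\mu_i\vert(J)\mod c_0(I)$ when $J$ is open. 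For (iii), Lemma~\ref{thm:Fitnonzerocountable} gives the countability of $\{i\in I:F_i(t)\ne0\}$ for each $t\in K$, in particular for $t\in Q$.

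With these three hypotheses verified for the family $(F_i|_Q)_{i\in I}$, Proposition~\ref{thm:propBicountable} tells me that conditions (a) and (c) of that proposition are equivalent for this family and this choice of $(k_i)$. The hypothesis that $(F_i)_{i\in I}$ is of type $c_0\ell_1$ over $Q$ is precisely condition (a), so condition (c) follows, i.e., $(F_i)_{i\in I}$ is of type $c_0\ell_1$ over $Q$ bounded by $(k_i)_{i\in I}$. This is exactly the conclusion in each of the two cases.

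There is no real obstacle here: the corollary is a straightforward assembly of three results already in place. The only minor point worth stating explicitly in the write-up is the bookkeeping reason that a weak*-null family is norm-bounded, which is needed to ensure the bounding family $(k_i)$ qualifies as bounded in the sense required by Proposition~\ref{thm:propBicountable}.
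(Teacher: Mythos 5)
Your proposal is correct and follows exactly the paper's own (one-line) proof: apply Proposition~\ref{thm:propBicountable} with $k_i=\vert\mu_i\vert(J)$ or $\tfrac12\vert\mu_i\vert(J)$, checking its hypotheses via Proposition~\ref{thm:weakstarnullboundfinsums} and Lemma~\ref{thm:Fitnonzerocountable}. The explicit remark that weak*-nullity forces norm-boundedness (Banach--Steinhaus) is a reasonable bit of bookkeeping the paper leaves implicit.
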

\begin{proof}
Follows from Lemma~\ref{thm:Fitnonzerocountable} and from Propositions~\ref{thm:weakstarnullboundfinsums} and \ref{thm:propBicountable}.
\end{proof}

\begin{cor}\label{thm:corJlambda}
Let $K$ be a compact line, $(F_i)_{i\in I}$ be a weak*-null family in $\NBV(K)$ and $(J_\lambda)_{\lambda\in\Lambda}$ be a family of pairwise disjoint
convex subsets of $K$ having more than one point. If $Q$ is a subset of $\bigcup_{\lambda\in\Lambda}J_\lambda$ and $(F_i)_{i\in I}$ is of type $c_0\ell_1$ over $Q\cap J_\lambda$ for all $\lambda\in\Lambda$ then $(F_i)_{i\in I}$ is of type $c_0\ell_1$ over $Q$.
\end{cor}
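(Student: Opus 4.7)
The plan is to apply Corollary \ref{thm:corc0l1bound} on each piece $J_\lambda$ to obtain quantitative $c_0\ell_1$ decompositions with $\ell_1$-tail controlled by $\vert\mu_i\vert(J_\lambda)$, and then paste these decompositions together. Pairwise disjointness of the $J_\lambda$ and uniform boundedness of the total variations $\Vert\mu_i\Vert$ will ensure that summing the individual bounds across $\lambda$ stays finite.

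First, I would note that the weak*-null family $(F_i)_{i\in I}$ is norm-bounded by the uniform boundedness principle, so there exists $M\ge0$ with $\Vert\mu_i\Vert\le M$ for all $i\in I$, where $\mu_i\in M(K)$ is the measure associated to $F_i$. Since each $J_\lambda$ has more than one point and $(F_i)_{i\in I}$ is of type $c_0\ell_1$ over $Q\cap J_\lambda$, Corollary \ref{thm:corc0l1bound} gives, for each $\lambda\in\Lambda$, a decomposition $F_i(t)=a_{i,t}^\lambda+b_{i,t}^\lambda$ for $t\in Q\cap J_\lambda$ with $\lim_{i\in I}a_{i,t}^\lambda=0$ for each such $t$ and $\sum_{t\in Q\cap J_\lambda}\vert b_{i,t}^\lambda\vert\le\vert\mu_i\vert(J_\lambda)$ for all $i\in I$.

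Next, since $Q\subseteq\bigcup_{\lambda\in\Lambda}J_\lambda$ and the $J_\lambda$ are pairwise disjoint, every $t\in Q$ belongs to a unique $J_{\lambda(t)}$; I would set $a_{i,t}=a_{i,t}^{\lambda(t)}$ and $b_{i,t}=b_{i,t}^{\lambda(t)}$. Then $F_i(t)=a_{i,t}+b_{i,t}$ and $\lim_{i\in I}a_{i,t}=0$ for each $t\in Q$. For the $\ell_1$-estimate, disjointness of the $J_\lambda$ inside $K$ yields
\[\sum_{t\in Q}\vert b_{i,t}\vert=\sum_{\lambda\in\Lambda}\sum_{t\in Q\cap J_\lambda}\vert b_{i,t}^\lambda\vert\le\sum_{\lambda\in\Lambda}\vert\mu_i\vert(J_\lambda)\le\vert\mu_i\vert(K)=\Vert\mu_i\Vert\le M,\]
which is the bound required by Definition \ref{thm:defc0ell1}.

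The only conceptual obstacle is that $\Lambda$ may be uncountable, so Corollary \ref{thm:corsigmaideal} (closure under countable unions only) does not apply directly. This is resolved by the preceding display: since $\sum_\lambda\vert\mu_i\vert(J_\lambda)$ is finite, at most countably many $\lambda$ can contribute nonzero terms to $(b_{i,t})_{t\in Q}$ for a fixed $i$, so the apparently transfinite sum over $\lambda$ is in fact a countable sum and the Banach--Steinhaus bound $M$ controls everything uniformly in $i$.
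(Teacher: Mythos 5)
Your proposal is correct and follows essentially the same route as the paper's proof: apply Corollary~\ref{thm:corc0l1bound} on each $J_\lambda$ to get decompositions with $\ell_1$-tails bounded by $\vert\mu_i\vert(J_\lambda)$, glue them via disjointness, and bound $\sum_\lambda\vert\mu_i\vert(J_\lambda)\le\Vert\mu_i\Vert\le\sup_{i\in I}\Vert\mu_i\Vert<+\infty$. The remarks on uniform boundedness and on why uncountability of $\Lambda$ is harmless are correct but not needed beyond what the displayed estimate already gives.
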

\begin{proof}
By Corollary~\ref{thm:corc0l1bound}, for each $\lambda\in\Lambda$, the family $(F_i)_{i\in I}$ is of type $c_0\ell_1$ over $Q\cap J_\lambda$ bounded by $\big(\vert\mu_i\vert(J_\lambda)\big)_{i\in I}$ and we can thus write $F_i(t)=a_{i,t}+b_{i,t}$ for $i\in I$, $t\in Q\cap J_\lambda$ with $\lim_{i\in I}a_{i,t}=0$ for all $t\in Q\cap J_\lambda$ and
\[\sum_{t\in Q\cap J_\lambda}\vert b_{i,t}\vert\le\vert\mu_i\vert(J_\lambda)\]
for all $i\in I$. Hence $\sum_{t\in Q}\vert b_{i,t}\vert\le\sum_{\lambda\in\Lambda}\vert\mu_i\vert(J_\lambda)\le\Vert\mu_i\Vert$ for all $i\in I$ and since $\sup_{i\in I}\Vert \mu_i\Vert<+\infty$ the conclusion follows.
\end{proof}

Using the results proven above we now show that if a $c_0(I)$-valued bounded operator $T$ defined on the subalgebra of $C(K)$ induced by a continuous
increasing surjection $\phi:K\to L$ admits a bounded extension to $C(K)$ then an extension can be obtained with norm at most twice the norm of $T$.
\begin{teo}
Let $\phi:K\to L$ be a continuous increasing surjection between compact lines and $T:C(L)\to c_0(I)$ be a bounded operator. If there exists a bounded
operator $S:C(K)\to c_0(I)$ with $S\circ\phi^*=T$ then there exists a bounded operator $S':C(K)\to c_0(I)$ with $S'\circ\phi^*=T$ and $\Vert S'\Vert\le2\Vert T\Vert$.
\end{teo}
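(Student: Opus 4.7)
The plan is to rephrase the extension statement in terms of weak*-null families in $\NBV$ and then to exploit the sharper $\tfrac12$-bound available in Corollary~\ref{thm:corc0l1bound} when the ambient convex set is open. First I would identify $T$ with the weak*-null family $(F_i)_{i\in I}$ in $\NBV(L)$ whose $i$-th element corresponds, via Proposition~\ref{thm:propNBVK}, to the $i$-th coordinate functional of $T$; as recalled in the introduction, this correspondence is an isometry, so that $\Vert T\Vert=\sup_{i\in I}\Vert F_i\Vert_{\BV}$. The existence of a bounded operator $S$ with $S\circ\phi^*=T$ is precisely the statement that $(F_i)_{i\in I}$ extends through $\phi$, which by Theorem~\ref{thm:c0ell1equivextend} means that $(F_i)_{i\in I}$ is of type $c_0\ell_1$ over $Q(\phi)$.

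The key step is now to apply Corollary~\ref{thm:corc0l1bound} with $J=L$. Since $L$ is trivially an open convex subset of itself and $\vert\mu_i\vert(L)=\Vert F_i\Vert_{\BV}$ for the measure $\mu_i\in M(L)$ associated to $F_i$, the corollary yields that $(F_i)_{i\in I}$ is of type $c_0\ell_1$ over $Q(\phi)$ bounded by $\bigl(\tfrac12\Vert F_i\Vert_{\BV}\bigr)_{i\in I}$. The quantitative part of Theorem~\ref{thm:c0ell1equivextend} then supplies a weak*-null family $(G'_i)_{i\in I}$ in $\NBV(K)$ with $\phi_*(G'_i)=F_i$ and
\[\Vert G'_i\Vert_{\BV}\le\Vert F_i\Vert_{\BV}+2\cdot\tfrac12\Vert F_i\Vert_{\BV}=2\Vert F_i\Vert_{\BV}\le 2\Vert T\Vert\]
for every $i\in I$. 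The bounded operator $S':C(K)\to c_0(I)$ corresponding to $(G'_i)_{i\in I}$ then satisfies $S'\circ\phi^*=T$ and $\Vert S'\Vert=\sup_{i\in I}\Vert G'_i\Vert_{\BV}\le 2\Vert T\Vert$, as required.

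Since every ingredient is already developed in Section~\ref{sec:c0ell1}, there is essentially no obstacle to this argument; the one point worth verifying is that the $\tfrac12$ improvement in Corollary~\ref{thm:corc0l1bound} genuinely applies with $J=L$, which is immediate because the whole compact line is open in itself. In contrast to the countable case treated in \cite[Proposition~2.5]{CTlines}, Sobczyk's Theorem is not invoked here; instead, the constant $2$ emerges directly from the combinatorial estimate in Proposition~\ref{thm:weakstarnullboundfinsums}, which in turn relies on applying Corollary~\ref{thm:corextremities} at both endpoints of a finite subset of $L$.
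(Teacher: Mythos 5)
Your proposal is correct and follows essentially the same route as the paper: translate $T$ into a weak*-null family $(F_i)_{i\in I}$ in $\NBV(L)$, use Theorem~\ref{thm:c0ell1equivextend} to see it is of type $c_0\ell_1$ over $Q(\phi)$, invoke Corollary~\ref{thm:corc0l1bound} with $J=L$ (open in itself) to get the bound $\bigl(\tfrac12\Vert F_i\Vert_{\BV}\bigr)_{i\in I}$, and then apply the quantitative part of Theorem~\ref{thm:c0ell1equivextend} to produce $(G_i)_{i\in I}$ with $\Vert G_i\Vert_{\BV}\le2\Vert F_i\Vert_{\BV}$. The only difference is that you make explicit the choice $J=L$ and the isometric identifications, which the paper leaves implicit.
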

\begin{proof}
Let $(F_i)_{i\in I}$ be the weak*-null family in $\NBV(L)$ corresponding to $T$. If a bounded operator $S:C(K)\to c_0(I)$ with $S\circ\phi^*=T$ exists then
$(F_i)_{i\in I}$ extends through $\phi$ and Theorem~\ref{thm:c0ell1equivextend} says that $(F_i)_{i\in I}$ is of type $c_0\ell_1$ over $Q(\phi)$.
By Corollary~\ref{thm:corc0l1bound}, $(F_i)_{i\in I}$ is then of type $c_0\ell_1$ over $Q(\phi)$ bounded by $\big(\frac12\Vert F_i\Vert_{\BV}\big)_{i\in I}$ and Theorem~\ref{thm:c0ell1equivextend} now yields a weak*-null family $(G_i)_{i\in I}$ in $\NBV(K)$ such that $\phi_*(G_i)=F_i$ and $\Vert G_i\Vert_{\BV}\le2\Vert F_i\Vert_{\BV}$, for all $i\in I$. The conclusion is obtained by letting $S':C(K)\to c_0(I)$ be the bounded operator corresponding to $(G_i)_{i\in I}$.
\end{proof}

\subsection{A characterization of weak*-null families}\label{sub:weakstarnull}

As an offshoot of the theory developed in this section we obtain a simple criterion that can be used to check if a family $(F_i)_{i\in I}$ in $\NBV(K)$ is weak*-null.
We start with a lemma that allows us to reduce the general case to the case of a metrizable (and thus separable) compact line.
\begin{lem}\label{thm:sufficesphiZ}
Given a compact line $K$, we have that a family $(F_i)_{i\in I}$ in $\NBV(K)$ is weak*-null if and only if for every closed subset $Z$ of $[0,1]$ and every continuous increasing surjection $\phi:K\to Z$, the family $\big(\phi_*(F_i)\big)_{i\in I}$ is weak*-null in $\NBV(Z)$.
\end{lem}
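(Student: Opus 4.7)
The easy direction is immediate: for any continuous increasing surjection $\phi:K\to Z$ and any $g\in C(Z)$, we have $g\circ\phi\in C(K)$ and $\int_Z g\,d(\phi_*\mu_i)=\int_K(g\circ\phi)\,d\mu_i$, where $\mu_i\in M(K)$ corresponds to $F_i$. Hence $(\phi_*F_i)$ is weak*-null in $\NBV(Z)$ whenever $(F_i)$ is weak*-null in $\NBV(K)$.

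For the converse, I assume $(\phi_*F_i)$ is weak*-null for every such $\phi$ and first show that $\int v\,d\mu_i\to 0$ for every $v$ in the linear span $V_0$ of continuous increasing real-valued functions on $K$: any continuous increasing $f:K\to\R$ factors as $f=g\circ\phi$, with $\phi=f$ (after affine rescaling) a continuous increasing surjection onto $Z:=f(K)\subseteq[0,1]$ and $g$ the inclusion $Z\hookrightarrow\R$; the hypothesis then gives $\int f\,d\mu_i=\int g\,d(\phi_*\mu_i)\to 0$, and linearity finishes $V_0$. Stone--Weierstrass shows $V_0$ is norm-dense in $C(K)$: it contains constants; it is a subalgebra because a product of two nonnegative continuous increasing functions is again continuous increasing and every continuous increasing function is such up to a constant; and it separates points of $K$, since for $s<t$ one uses $\chi_{[t,\max K]}$ when $(s,t)=\emptyset$ and otherwise a dyadic Urysohn construction with nested closed down-sets $[\min K,s_r]$.

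The heart of the argument is then the uniform bound $M:=\sup_i\|\mu_i\|<\infty$. The key observation is that $\|\mu\|=\sup_\phi\|\phi_*\mu\|$ for every $\mu\in M(K)$: given $f\in C(K)$ with $\|f\|\le 1$ and $|\int f\,d\mu|$ close to $\|\mu\|$, uniformly approximate $f$ by some $h=\sum_j\alpha_j\phi_j\in V_0$, and observe that such $h$ factors as $h=\tilde g\circ\phi$, where $\phi:=\sum_j 2^{-j}\phi_j$ (after rescaling each $\phi_j$ to take values in $[0,1]$) is a continuous increasing surjection onto a closed subset of $[0,1]$ with $\|\tilde g\|_\infty=\|h\|_\infty$---the crucial refinement property being that $\phi(s)=\phi(t)$ forces $\phi_j(s)=\phi_j(t)$ for every $j$, so that each $\phi_j$ factors through $\phi$. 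Assuming for contradiction that $\|\mu_{i_n}\|\to\infty$, one picks continuous increasing $\phi_n:K\to[0,1]$ with $\|\phi_{n*}\mu_{i_n}\|\ge\|\mu_{i_n}\|/2$ and sets $\phi:=\sum_n 2^{-n}\phi_n$; the same refinement argument gives $\phi_n=\psi_n\circ\phi$ with $\|\psi_{n*}\|\le 1$, so $\|\phi_*\mu_{i_n}\|\ge\|\phi_{n*}\mu_{i_n}\|\to\infty$, contradicting the hypothesis that $\sup_i\|\phi_*\mu_i\|<\infty$ for this particular $\phi$.

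Finally, for arbitrary $f\in C(K)$ and $\varepsilon>0$, choose $v\in V_0$ with $\|f-v\|<\varepsilon/(2M)$: then $|\mu_i(f-v)|\le\varepsilon/2$ for every $i$ and $\{i:|\mu_i(v)|\ge\varepsilon/2\}$ is finite by the previous paragraph, so $\{i:|\mu_i(f)|\ge\varepsilon\}$ is finite, proving $(F_i)$ is weak*-null. The main obstacle is the uniform-boundedness step: naive Banach--Steinhaus on a dense subspace is known to fail in general (e.g.~for $X=c_0$ with $V$ the finite sequences), so one must exploit the compact-line structure through both the identity $\|\mu\|=\sup_\phi\|\phi_*\mu\|$ and the refinement property of ``diagonal sums'' of continuous increasing maps into $[0,1]$.
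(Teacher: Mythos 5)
Your proof is correct, but it takes a genuinely different and longer route through the hard direction than the paper does. You first verify weak*-nullity on the dense subspace $V_0$ spanned by the continuous increasing functions, and then, to pass to all of $C(K)$, you are forced to establish the uniform bound $\sup_{i\in I}\Vert\mu_i\Vert<+\infty$ --- which you do via the identity $\Vert\mu\Vert=\sup_\phi\Vert\phi_*\mu\Vert$ and a diagonal contradiction argument (implicitly also using Banach--Steinhaus on $C(Z)$ to know that the weak*-null family $\big(\phi_*\mu_i\big)_{i\in I}$ is bounded). The paper sidesteps the uniform-boundedness issue entirely: for a \emph{fixed} $f\in C(K)$, it picks increasing functions $(f_n)_{n\ge1}$ with $f$ in the closed linear span of $\{f_n:n\ge1\}$, forms the single map $\phi=\sum_n2^{-n}f_n$, and observes that the image of $\phi^*$ is a closed subspace containing every $f_n$ and hence containing $f$ itself; then $\int_Kf\,\dd\mu_i=\int_Zg\,\dd(\phi_*\mu_i)\to0$ follows at once from the hypothesis applied to this one $Z$. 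In other words, the same refinement trick ($\phi=\sum_n2^{-n}\phi_n$ refines each $\phi_n$) that you use to prove the norm identity is used in the paper to make the \emph{entire} function $f$, not merely an approximant, factor through one $\phi$, so no quantitative control on $\Vert\mu_i\Vert$ is ever needed. Your approach buys the (nice, but here superfluous) fact $\Vert\mu\Vert=\sup_\phi\Vert\phi_*\mu\Vert$ and a self-contained Stone--Weierstrass proof of the density of $V_0$ (which the paper simply cites from Kubi\'s); the paper's approach is shorter and avoids the dense-subspace Banach--Steinhaus pitfall you correctly identify, by never needing to leave the image of a single $\phi^*$.
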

\begin{proof}
It is sufficient to show that for every $f\in C(K)$ there exists a closed subset $Z$ of $[0,1]$ and a continuous increasing surjection $\phi:K\to Z$ such that $f$ belongs to the image of $\phi^*:C(Z)\to C(K)$. By \cite[Proposition~3.2]{Kubis}, the subset of $C(K)$ consisting of continuous increasing functions
spans a dense subspace of $C(K)$ and therefore there exists a sequence $(f_n)_{n\ge1}$ of continuous increasing functions $f_n:K\to\R$ such that $f$ belongs
to the closure of the linear span of the set $\{f_n:n\ge1\}$. Moreover, we can assume that the range of $f_n$ is contained in $[0,1]$, for all $n\ge1$.
Set $\phi=\sum_{n=1}^\infty\frac1{2^n}f_n$ and let $Z\subset[0,1]$ denote the range of $\phi$. To prove that $f$ belongs to the image of $\phi^*$,
simply note that for all $t,s\in K$, if $\phi(t)=\phi(s)$ then $f_n(t)=f_n(s)$ for all $n\ge1$ and hence $f(t)=f(s)$.
\end{proof}

For a zero-dimensional compact line $K$ we have that a family $(F_i)_{i\in I}$ in $\NBV(K)$ is weak*-null if and only if it is bounded and
$\lim_{i\in I}F_i(t)=0$, for all $t\in K^+$. This follows from the fact that if $K$ is zero-dimensional then $\big\{\chilow{[\min K,t]}:t\in K^+\big\}$
spans a dense subspace of $C(K)$. The idea of the proof of our criterion for weak*-nullity below is to write an arbitrary (separable) compact line as the image of a continuous increasing map $\phi$ defined in a zero-dimensional compact line and then transfer the problem to the zero-dimensional domain of $\phi$ in which the simpler criterion for weak*-nullity applies. To obtain such map $\phi$, we will need the following observation: if $A$ is a dense subset of a compact line $K$ then the compact line $\DA(K;A)$ (recall Remark~\ref{thm:remgenDA}) is zero-dimensional. This follows for instance from the fact that a compact line is zero-dimensional if and only if its subset of right-isolated points is dense.
\begin{teo}\label{thm:charactweakstarnull}
Given a compact line $K$, we have that a family $(F_i)_{i\in I}$ in $\NBV(K)$ is weak*-null if and only if all of the following conditions hold:
\begin{itemize}
\item[(a)] $(F_i)_{i\in I}$ is bounded in $\NBV(K)$;
\smallskip
\item[(b)] $\lim_{i\in I}F_i(t)=0$, for all $t\in K^+$;
\smallskip
\item[(c)] there exists $k\ge0$ such that $\sum_{t\in S}\vert F_i(t)\vert\le k\mod c_0(I)$ for every finite subset $S$ of $K$.
\end{itemize}
\end{teo}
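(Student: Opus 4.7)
The forward direction is immediate from results already established in the paper: (a) is the Uniform Boundedness Principle applied to the pointwise bounded family $(F_i)_{i\in I}$ in $C(K)^*$; (b) reflects the fact that for $t\in K^+$ the interval $[\min K,t]$ is clopen, so $\chilow{[\min K,t]}\in C(K)$ and hence $F_i(t)=\int\chilow{[\min K,t]}\,\dd\mu_i\to0$; and (c) is Proposition~\ref{thm:weakstarnullboundfinsums} applied with $J=K$.

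For the converse, assume (a)--(c) and let $\mu_i\in M(K)$ correspond to $F_i$ via the isometry of Proposition~\ref{thm:propNBVK}. The plan is to prove $\int f\,\dd\mu_i\to0$ for every $f\in C(K)$ by a Riemann--Stieltjes style approximation. Fix $f\in C(K)$ and $\varepsilon>0$. Using continuity of $f$ and compactness of $K$, extract a finite partition $\min K=t_0<t_1<\cdots<t_n=\max K$ such that $f$ has oscillation less than $\varepsilon$ on each $[t_{j-1},t_j]$. Let $\tilde f$ be the step function taking the value $f(\min K)$ at $\min K$ and $f(t_j)$ on ${]t_{j-1},t_j]}$, so that $\Vert\tilde f-f\Vert_\infty\le\varepsilon$. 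Direct computation using $\mu_i(\{\min K\})=F_i(\min K)$ and $\mu_i({]t_{j-1},t_j]})=F_i(t_j)-F_i(t_{j-1})$, combined with Abel summation, yields
\[
\int_K\tilde f\,\dd\mu_i=\bigl(f(t_0)-f(t_1)\bigr)F_i(t_0)+f(t_n)F_i(t_n)+\sum_{j=1}^{n-1}\bigl(f(t_j)-f(t_{j+1})\bigr)F_i(t_j).
\]

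The oscillation bound gives $\vert f(t_j)-f(t_{j+1})\vert<\varepsilon$ for $j=0,\dots,n-1$. Hypothesis (a) then bounds the first summand by a constant multiple of $\varepsilon$; hypothesis (b), applied to $\max K\in K^+$, makes the second summand small for all $i$ outside a finite set; and hypothesis (c) bounds $\sum_{j=1}^{n-1}\vert F_i(t_j)\vert$ by $k+1$ outside a finite set, making the third summand at most $\varepsilon(k+1)$. Combined with $\vert\int(f-\tilde f)\,\dd\mu_i\vert\le\varepsilon\sup_i\Vert\mu_i\Vert$, this gives $\vert\int f\,\dd\mu_i\vert\le C\varepsilon$ for all $i$ outside a finite subset of $I$, with $C$ independent of $i$ and $\varepsilon$. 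Since $\varepsilon>0$ is arbitrary, $\int f\,\dd\mu_i\to0$, as required.

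The chief technical point is the construction of the partition with a uniform oscillation bound in the generality of a compact line that may fail to be metrizable. This can be handled either by a direct finite-subcover argument in the order topology, or by first reducing to the case where $K$ is a closed subset of $[0,1]$ via Lemma~\ref{thm:sufficesphiZ}, after verifying that conditions (a)--(c) descend under push-forward by a continuous increasing surjection between compact lines.
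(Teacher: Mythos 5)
Your overall strategy for the converse --- a direct Riemann--Stieltjes approximation of $f$ by a step function followed by Abel summation --- is viable and genuinely different from the paper's proof, which instead lifts $(F_i)_{i\in I}$ to a weak*-null family on the zero-dimensional compact line $\DA(K;A)$ over a countable dense set $A$ (using Proposition~\ref{thm:propTcountable} to decompose $(F_i)_{i\in I}$ over $A$) and reduces the general case to the separable one via Lemma~\ref{thm:sufficesphiZ}. However, as written your argument has a genuine gap, and it is not the one you flag. The claim that $K$ admits a finite partition $\min K=t_0<\cdots<t_n=\max K$ with $f$ of oscillation less than $\varepsilon$ on each \emph{closed} interval $[t_{j-1},t_j]$ is false whenever $f$ jumps across a gap of $K$: for $K=\{0,1\}$, or $K=[0,1]\cup[2,3]$ with $f=\chilow{[2,3]}$, no partition achieves this, and metrizability is irrelevant, so the reduction to $K\subset[0,1]$ does not repair it. What a supremum argument along $K$ does give is small oscillation on the half-open intervals $\left]t_{j-1},t_j\right]$; this suffices for $\Vert\tilde f-f\Vert_\infty\le\varepsilon$, but it yields $\vert f(t_j)-f(t_{j+1})\vert\le\varepsilon$ only when $t_j$ is \emph{not} right-isolated (let $s\to t_j^+$ inside $\left]t_j,t_{j+1}\right]$). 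At partition points $t_j\in K^+$ the difference $f(t_j)-f(t_{j+1})$ can be of order $\Vert f\Vert_\infty$.

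This is precisely where hypothesis (b) must be invoked at \emph{every} point of $K^+$, whereas you use it only at $\max K$; the omission is fatal rather than cosmetic. On $K=[0,1]\cup[2,3]$ the constant family $\mu_i=\delta_1-\delta_2$, i.e.\ $F_i=\chilow{\{1\}}$, satisfies (a), satisfies (c) with $k=1$, and has $\lim_iF_i(\max K)=0$, yet it is not weak*-null since $\int f\,\dd\mu_i=f(1)-f(2)$; your estimates as stated would ``prove'' that it is. The repair is straightforward: in the Abel sum, separate the indices $j$ with $t_j\in K^+$ --- finitely many terms, each tending to $0$ over $i$ by (b) --- from those with $t_j\notin K^+$, where $\vert f(t_j)-f(t_{j+1})\vert\le\varepsilon$ does hold and (c) bounds $\sum_j\vert F_i(t_j)\vert$ by $k+1$ outside a finite set of indices $i$; treat the term $\big(f(t_0)-f(t_1)\big)F_i(t_0)$ analogously according to whether $\min K\in K^+$. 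With these corrections the argument closes, giving $\vert\int_Kf\,\dd\mu_i\vert\le C\varepsilon$ for all $i$ outside a finite set, with $C$ independent of $\varepsilon$.
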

\begin{proof}
If $(F_i)_{i\in I}$ is weak*-null then (a) and (b) clearly hold and (c) follows from Proposition~\ref{thm:weakstarnullboundfinsums}. Conversely, assume
that (a), (b) and (c) hold. We first handle the case where $K$ is separable. Let $A$ be a countable dense subset of $K$ and consider the first projection
$\phi:\DA(K;A)\to K$. As noted above, the fact that $A$ is dense in $K$ implies that $\DA(K;A)$ is zero-dimensional.
We will now obtain a weak*-null family $(G_i)_{i\in I}$ in $\NBV\!\big(\!\DA(K;A)\!\big)$ with $\phi_*(G_i)=F_i$ for all $i\in I$, from which the fact that $(F_i)_{i\in I}$ is weak*-null will follow from the weak*-continuity of $\phi_*$.

In order to construct $(G_i)_{i\in I}$, note first that
from (c) and Proposition~\ref{thm:propTcountable} it follows that $(F_i)_{i\in I}$ is of type $c_0\ell_1$ over $A$. Write
$F_i(t)=a_{i,t}+b_{i,t}$ for $i\in I$, $t\in A$ with $\lim_{i\in I}a_{i,t}=0$ for all $t\in A$ and $\sup_{i\in I}\sum_{t\in A}\vert b_{i,t}\vert<+\infty$.
For each $i\in I$, define the map $G_i:\DA(K;A)\to\R$ by setting $G_i(t,0)=a_{i,t}$ and $G_i(t,1)=F_i(t)$ for all $t\in A$ and $G_i(t,0)=F_i(t)$ for all $t\in K\setminus A$, so that $\phi_*(G_i)=F_i$. It follows from (a) and Lemma~\ref{thm:normBVestimaterightcont} that $(G_i)_{i\in I}$ is a bounded family in $\NBV\!\big(\!\DA(K;A)\!\big)$.
As $\DA(K;A)$ is zero-dimensional, to check that $(G_i)_{i\in I}$ is weak*-null, pick $s\in\DA(K;A)^+$ and note that either $s=(t,0)$ with $t\in A$, in which case $\lim_{i\in I}G_i(s)=\lim_{i\in I}a_{i,t}=0$, or $s=\max\phi^{-1}(t)$ with $t\in K^+$, in which case $\lim_{i\in I}G_i(s)=\lim_{i\in I}F_i(t)=0$ by (b). This concludes the proof of the theorem in the separable case.

The general case now follows using Lemma~\ref{thm:sufficesphiZ}. Namely, if $(F_i)_{i\in I}$ satisfies (a), (b) and (c) and $\phi:K\to Z$ is a continuous increasing surjection with $Z$ a closed subset of $[0,1]$, then the family $\big(\phi_*(F_i)\big)_{i\in I}$ is bounded in $\NBV(Z)$ and satisfies
$\lim_{i\in I}\phi_*(F_i)(t)=0$ for every $t\in Z^+$ and
\[\sum_{t\in S}\vert\phi_*(F_i)(t)\vert\le k\mod c_0(I)\]
for every finite subset $S$ of $Z$. Hence $\big(\phi_*(F_i)\big)_{i\in I}$ is weak*-null in $\NBV(Z)$ by the separable case already proven.
\end{proof}

\end{section}

\begin{section}{Separably determined compact lines}
\label{sec:separablydetermined}

If $I$ is a countable set then Proposition~\ref{thm:atmostcardI} and Corollary~\ref{thm:corc0ell1oncountable} imply that a weak*-null family $(F_i)_{i\in I}$ in $\NBV(K)$ is of type $c_0\ell_1$ over a subset $Q$ of a compact line $K$ if and only if $\lim_{i\in I}F_i(t)=0$ for all but countably many points $t\in Q$. However, if $I$ is uncountable, we have seen in Example~\ref{exa:Victor} that the condition $\lim_{i\in I}F_i(t)=0$ might fail for uncountably many $t\in Q$ for a weak*-null family $(F_i)_{i\in I}$ of type $c_0\ell_1$ over $Q$. Let us now investigate how frequently such failure may occur within the intersection of $Q$ with a separable subset of $K$. We need a preparatory result.
\begin{lem}\label{thm:lemaHsepJcountable}
Let $K$ be a compact line, $(F_i)_{i\in I}$ be a weak*-null family in $\NBV(K)$ and $H$ be a closed separable subset of $K$. There exists a countable subset $I_0$ of $I$ such that $F_i$ is identically zero on $H\setminus H^+$ for all $i\in I\setminus I_0$.
\end{lem}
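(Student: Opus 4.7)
The plan is to choose a countable dense subset $D$ of $H$ and set
\[I_0=\bigcup_{t\in D}\big\{i\in I:F_i(t)\ne0\big\}.\]
By Lemma~\ref{thm:Fitnonzerocountable} each set in this union is countable, so $I_0$ is countable. For any $i\in I\setminus I_0$ we then have $F_i\equiv0$ on $D$, and the goal reduces to propagating this vanishing to every $t\in H\setminus H^+$ via the right-continuity of $F_i\in\NBV(K)$.

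The key point is that for $t\in H\setminus H^+$ one can produce a sequence $(d_k)_{k\ge1}$ in $D$ with $d_k>t$ and $d_k\to t$ in $K$. I first argue that $\inf\{s\in H:s>t\}=t$: since $H$ is closed in $K$ and closed subsets of compact lines are order-closed, this infimum lies in $H$, and if it were strictly greater than $t$ it would be an $H$-successor of $t$, placing $t$ in $H^+$. By density of $D$ in $H$ the same conclusion holds for $\inf\{d\in D:d>t\}$, which is therefore equal to $t$. Enumerating this (necessarily infinite) countable set of $D$-elements above $t$ and passing to running minima produces a decreasing sequence in $D$ that lies strictly above $t$ and has infimum $t$; monotone convergence in the order topology of $K$ gives $d_k\to t$ in $K$. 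Right-continuity of $F_i$ at $t$ then yields $F_i(t)=\lim_k F_i(d_k)=0$, as each $d_k\in D$.

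The one subtlety to verify is that this argument is meaningful, i.e., that $t$ is not right-isolated in $K$ so that a sequence approaching from the right can exist at all. If $t$ were right-isolated in $K$ with $K$-successor $t'$, then every $h\in H$ with $h>t$ would satisfy $h\ge t'$, whence (again using that $\inf\{s\in H:s>t\}\in H$) either $t'\in H$ becomes the $H$-successor of $t$ or some $\alpha>t'$ in $H$ is the minimum of $\{s\in H:s>t\}$; in either case $t\in H^+$, a contradiction. This topological passage between $H$ and $K$ is the main obstacle; once it is settled, the rest is a standard density and right-continuity argument.
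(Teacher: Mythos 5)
Your proof is correct, but it takes a genuinely different route from the paper's. The paper keeps everything at the level of the measures $\mu_i$: it fixes, for each pair $t<s$ in a countable dense subset $D$ of $H$, a continuous bump function $f_{ts}$ equal to $1$ on $[\min K,t]$ and $0$ on $[s,\max K]$, lets $I_0$ be the indices $i$ with $\int_K f_{ts}\,\dd\mu_i\ne0$ for some pair, and then for $u\in H\setminus H^+$ and $i\notin I_0$ uses regularity of $\mu_i$ to find $v>u$ with $\vert\mu_i\vert\big(\left]u,v\right[\big)<\varepsilon$ and two points of $D$ inside $\left]u,v\right[$, giving $\vert F_i(u)\vert<\varepsilon$ directly. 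You instead work at the level of the functions $F_i$: you define $I_0$ from the pointwise non-vanishing sets $\{i:F_i(t)\ne0\}$, $t\in D$, invoking Lemma~\ref{thm:Fitnonzerocountable} for countability, and then propagate the vanishing from $D$ to $H\setminus H^+$ by showing each such point is the infimum of $\{d\in D:d>t\}$ (your order-topological argument that closedness of $H$ forces $\inf\{s\in H:s>t\}=t$, hence $t\notin K^+$, is sound, and a decreasing sequence of running minima in $D$ does converge to its infimum in the order topology) and applying right-continuity of $F_i\in\NBV(K)$. Your argument is more modular, reusing Lemma~\ref{thm:Fitnonzerocountable} and the structure theory of $\NBV(K)$ in place of a fresh regularity estimate; the paper's version is self-contained at the measure level and produces the quantitative bound $\vert F_i(u)\vert\le\vert\mu_i\vert\big(\left]u,v\right[\big)$ along the way. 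The only loose ends in your write-up are the degenerate cases $t=\max H$ and $\{s\in H:s>t\}=\emptyset$, both of which place $t$ in $H^+$ immediately and so are harmless.
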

\begin{proof}
For each $i\in I$, denote by $\mu_i\in M(K)$ the measure associated to $F_i$. Let $D$ be a countable dense subset of $H$ and for each $t,s\in D$ with $t<s$ choose a continuous function $f_{ts}:K\to[0,1]$ that equals $1$ on $[\min K,t]$ and equals zero on $[s,\max K]$. We will show that $F_i$ is identically zero on $H\setminus H^+$ for every $i\in I$ outside of the countable set
\[I_0=\!\bigcup_{\substack{t,s\in D\\t<s}}\Big\{i\in I:\int_Kf_{ts}\,\dd\mu_i\ne0\Big\}.\]
To this aim, fix $u\in H\setminus H^+$ and $i\in I\setminus I_0$. Given $\varepsilon>0$, the regularity of $\mu_i$ yields $v\in K$ with
$v>u$ and $\vert\mu_i\vert\big(\left]u,v\right[\!\big)<\varepsilon$. Since $u$ is not right-isolated in $H$,
the intersection of $\left]u,v\right[$ with $H$ is an infinite open subset of $H$ and we can thus find $t,s\in D\cap\left]u,v\right[$ with $t<s$.
Now $\int_Kf_{ts}\,\dd\mu_i=0$ and hence:
\[\big\vert F_i(u)\big\vert=\Bigg\vert\int_K\big(\chilow{[\min K,u]}-f_{ts}\big)\,\dd\mu_i\Bigg\vert
\le\vert\mu_i\vert\big(\left]u,v\right[\!\big)<\varepsilon.\qedhere\]
\end{proof}

\begin{teo}\label{thm:teoHseparable}
Let $K$ be a compact line, $Q$ be a subset of $K$ and $(F_i)_{i\in I}$ be a weak*-null family in $\NBV(K)$.
If $(F_i)_{i\in I}$ is of type $c_0\ell_1$ over $Q$, then the set
\begin{equation}\label{eq:QinterHsetminusHplus}
\big\{t\in Q\cap(H\setminus H^+):\big(F_i(t)\big)_{i\in I}\not\in c_0(I)\big\}
\end{equation}
is countable for every closed separable subset $H$ of $K$ and, moreover, the set
\[\big\{t\in Q\cap H:\big(F_i(t)\big)_{i\in I}\not\in c_0(I)\big\}\]
is countable for every closed separable convex subset $H$ of $K$.
\end{teo}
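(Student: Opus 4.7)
The plan is to derive both statements from Lemma~\ref{thm:lemaHsepJcountable}---which locates all but countably many $F_i$ as identically zero on $H\setminus H^+$---combined with the cardinality bound in Proposition~\ref{thm:atmostcardI} applied to a suitable countable subfamily.

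For the first statement, I would apply Lemma~\ref{thm:lemaHsepJcountable} to obtain a countable $I_0\subseteq I$ such that $F_i$ vanishes on $H\setminus H^+$ for every $i\in I\setminus I_0$. Then for every $t\in H\setminus H^+$ we have $\big(F_i(t)\big)_{i\in I}\in c_0(I)$ if and only if $\big(F_i(t)\big)_{i\in I_0}\in c_0(I_0)$, so the set in~\eqref{eq:QinterHsetminusHplus} is contained in $\{t\in Q:(F_i(t))_{i\in I_0}\notin c_0(I_0)\}$. The hypothesis that $(F_i)_{i\in I}$ is of type $c_0\ell_1$ over $Q$ is inherited by the subfamily indexed by $I_0$: any witnessing decomposition $F_i(t)=a_{i,t}+b_{i,t}$, when restricted to $i\in I_0$, still satisfies $\lim_{i\in I_0}a_{i,t}=0$ for every $t\in Q$ (the condition in \eqref{eq:aitc0} is preserved under restriction of the index set) and the $\ell_1$-bound trivially persists. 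Proposition~\ref{thm:atmostcardI} applied to $(F_i)_{i\in I_0}$ then bounds the set above by $|I_0|\le\aleph_0$.

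For the second statement I would split $Q\cap H$ as the disjoint union $\big(Q\cap(H\setminus H^+)\big)\cup(Q\cap H^+)$. The first piece is countable by what was just proved. For the second, the key observation is that convexity of $H$ in $K$ forces $H^+\subseteq K^+\cup\{\max H\}$: if $t\in H^+$ with $t\ne\max H$ admits an immediate successor $s\in H$ (so $\left]t,s\right[\cap H=\emptyset$), convexity yields $\left]t,s\right[\subseteq H$, hence $\left]t,s\right[=\emptyset$ in $K$ and $t\in K^+$. Every $t\in K^+$ satisfies $\lim_{i\in I}F_i(t)=0$ by weak*-nullness (the characteristic function of $[\min K,t]$ is continuous, as pointed out right after Proposition~\ref{thm:propNBVK}), so such points do not contribute. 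Therefore $\{t\in Q\cap H^+:(F_i(t))_{i\in I}\notin c_0(I)\}\subseteq\{\max H\}$ is at most a singleton, and the union of the two pieces is countable.

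I do not anticipate a serious difficulty here: each step is a direct application of a tool already established in this section, together with the elementary order-theoretic observation about convex subsets of compact lines. The only minor point that deserves explicit mention is the passage of the $c_0\ell_1$ property to a subfamily indexed by a countable $I_0$, but this is immediate from the definition since the condition ``$\{i\in I:|a_{i,t}|\ge\varepsilon\}$ is finite'' a fortiori restricts to ``$\{i\in I_0:|a_{i,t}|\ge\varepsilon\}$ is finite''.
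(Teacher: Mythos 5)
Your proof is correct and follows essentially the same route as the paper: both parts rest on Lemma~\ref{thm:lemaHsepJcountable} together with Proposition~\ref{thm:atmostcardI} applied to the countable subfamily $(F_i)_{i\in I_0}$, and the convex case is handled by the same observation that the only point of $H^+$ possibly outside $K^+$ is $\max H$. The extra details you supply (the restriction of the $c_0\ell_1$ decomposition to $I_0$ and the order-theoretic argument for $H^+\subseteq K^+\cup\{\max H\}$) are exactly the steps the paper leaves implicit.
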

\begin{proof}
For the first part of the statement, pick a countable subset $I_0$ of $I$ as in Lemma~\ref{thm:lemaHsepJcountable} and use Proposition~\ref{thm:atmostcardI} for the family $(F_i)_{i\in I_0}$ which is of type $c_0\ell_1$ over $Q\cap(H\setminus H^+)$. For the second part, simply note that if $H$ is convex then the only point of $H^+$ that might not be in $K^+$ is the maximum of $H$.
\end{proof}
Theorem~\ref{thm:teoHseparable} no longer holds if $H\setminus H^+$ is replaced with $H$ in \eqref{eq:QinterHsetminusHplus}. For instance, consider
the weak*-null family $(F_i)_{i\in I}$ of type $c_0\ell_1$ over the compact line $L=L_0\times[0,1]$ defined in Example~\ref{exa:Victor} with $L_0=[0,1]$ and take the separable
closed subset $H$ of $L$ to be $L_0\times\{0,1\}$.

\smallskip

One could hope for the validity of the converse of the first part of the statement of Theorem~\ref{thm:teoHseparable}, as that would yield a nice generalization of \cite[Proposition~2.5]{CTlines}. However, such converse does not hold in general. In fact, we will study now the class of compact lines for which the converse does hold and later in Section~\ref{sec:notsepdet} we will see that not all compact lines belong to such class.
\begin{defin}\label{thm:defseparablydetermined}
A compact line $K$ is said to be {\em separably determined\/} if for every weak*-null family $(F_i)_{i\in I}$ in $\NBV(K)$ and every subset $Q$ of $K$
the following condition holds: if the set \eqref{eq:QinterHsetminusHplus} is countable for every
closed separable subset $H$ of $K$ then $(F_i)_{i\in I}$ is of type $c_0\ell_1$ over $Q$.
\end{defin}

Let us show now that the class of separably determined compact lines can also be defined in terms of an equivalence between the $c_0$-extension property
and the $c_0(I)$-extension property for continuous increasing surjections.
\begin{teo}
Given a compact line $L$, the following conditions are equivalent:
\begin{itemize}
\item[(a)] $L$ is separably determined;
\item[(b)] for every compact line $K$, every continuous increasing surjection $\phi:K\to L$
and every set $I$, the map $\phi$ has the $c_0$-extension property if and only if $\phi$ has the $c_0(I)$-extension property.
\end{itemize}
\end{teo}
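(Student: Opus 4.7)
The plan is to prove each direction using the dictionary between extension properties and families of type $c_0\ell_1$ given by Theorem~\ref{thm:c0ell1equivextend}, combined with the sequence-level extension criterion of \cite[Proposition~2.5]{CTlines}. In (b), the direction $c_0(I)$-extension $\Rightarrow$ $c_0$-extension is immediate by taking $I=\mathbb{N}$, so both implications reduce to a single nontrivial direction each.

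For $(a)\Rightarrow(b)$, assume $L$ is separably determined and $\phi:K\to L$ has the $c_0$-extension property; given a weak*-null family $(F_i)_{i\in I}$ in $\NBV(L)$, the aim is to show by Theorem~\ref{thm:c0ell1equivextend} that $(F_i)_{i\in I}$ is of type $c_0\ell_1$ over $Q(\phi)$. By separable determination it suffices to check, for every closed separable $H\subseteq L$, that $\bigl\{t\in Q(\phi)\cap(H\setminus H^+):(F_i(t))_{i\in I}\notin c_0(I)\bigr\}$ is countable. For fixed $H$, Lemma~\ref{thm:lemaHsepJcountable} yields a countable $I_0\subseteq I$ with $F_i\equiv 0$ on $H\setminus H^+$ for $i\notin I_0$, so on $H\setminus H^+$ the condition $(F_i(t))_{i\in I}\notin c_0(I)$ reduces to $(F_i(t))_{i\in I_0}\notin c_0(I_0)$. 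The sequence $(F_i)_{i\in I_0}$ is weak*-null and extends through $\phi$ by hypothesis, so \cite[Proposition~2.5]{CTlines} delivers the required countability.

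For $(b)\Rightarrow(a)$, assume (b) and let $(F_i)_{i\in I}$ be weak*-null in $\NBV(L)$ and $Q\subseteq L$ satisfy the separable-intersection-countable hypothesis. Setting $Q_0=\{t\in Q:(F_i(t))_{i\in I}\notin c_0(I)\}$, the family is trivially of type $c_0\ell_1$ over $Q\setminus Q_0$, so by Corollary~\ref{thm:corsigmaideal} the task reduces to proving $(F_i)$ is of type $c_0\ell_1$ over $Q_0$; note that by hypothesis $Q_0\cap(H\setminus H^+)$ is countable for every closed separable $H$. Take the first projection $\phi=\pi_1:\DA(L;Q_0)\to L$ from Remark~\ref{thm:remgenDA}, so that $Q(\phi)=Q_0$. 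The plan is to prove that $\phi$ has the $c_0$-extension property; granted this, (b) supplies the $c_0(I)$-extension property, $(F_i)$ extends through $\phi$, and Theorem~\ref{thm:c0ell1equivextend} yields that $(F_i)$ is of type $c_0\ell_1$ over $Q_0$.

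The main obstacle is verifying the $c_0$-extension property of $\phi$, which by \cite[Proposition~2.5]{CTlines} amounts to showing, for every weak*-null sequence $(G_n)_{n\ge 1}$ in $\NBV(L)$, that $T:=\bigl\{t\in Q_0:(G_n(t))_{n\ge 1}\notin c_0\bigr\}$ is countable. The inclusion $T\subseteq Q_0$ gives $T\cap(H\setminus H^+)$ countable for every closed separable $H$, and a sequential-separable-determination step is needed to upgrade this to countability of $T$ itself. The plan here is a contradiction argument: a pigeonhole yields $\varepsilon>0$ and an uncountable $T_\varepsilon\subseteq T$ with $|G_n(t)|\ge\varepsilon$ for infinitely many $n$ at each $t\in T_\varepsilon$; by Lemma~\ref{thm:uncountablerightcharacter} every such $t$ has countable right character, so each admits a strictly decreasing sequence in $L$ converging to it from the right. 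Combining such data for a carefully chosen countable sub-extraction should produce a closed separable $H$ containing uncountably many points of $T_\varepsilon$ as non-right-isolated elements of $H$, contradicting the assumed countability of $Q_0\cap(H\setminus H^+)$.
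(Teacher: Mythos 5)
Your direction $(a)\Rightarrow(b)$ is essentially the paper's argument: reduce via Theorem~\ref{thm:c0ell1equivextend} to showing $(F_i)_{i\in I}$ is of type $c_0\ell_1$ over $Q(\phi)$, use separable determination to localize to $Q(\phi)\cap(H\setminus H^+)$, and then use Lemma~\ref{thm:lemaHsepJcountable} to pass to a countable subfamily $(F_i)_{i\in I_0}$, for which the $c_0$-extension property plus the countable-index extension criterion gives the required countability. That part is fine. In $(b)\Rightarrow(a)$ your skeleton also matches the paper (reduce to $Q_0$, form $\DA(L;Q_0)$, show the projection $\phi$ has the $c_0$-extension property, then invoke (b) and Theorem~\ref{thm:c0ell1equivextend}).

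The gap is in your verification that $\phi:\DA(L;Q_0)\to L$ has the $c_0$-extension property. You route this through \cite[Proposition~2.5]{CTlines}, which forces you to prove the following ``sequential separable determination'' claim: if $(G_n)_{n\ge1}$ is weak*-null and $T\subseteq\{t:(G_n(t))_{n}\notin c_0\}$ has countable trace on $H\setminus H^+$ for every closed separable $H$, then $T$ is countable. Your proposed contradiction argument uses only that $T_\varepsilon$ is an uncountable set of non-right-isolated points of countable right character whose trace on $H\setminus H^+$ is countable for every closed separable $H$ --- but no contradiction can be derived from these properties alone: Lemmas~\ref{thm:lemmapressing} and \ref{thm:lemmaEomega} of the paper construct, inside $L^\omega$, an uncountable set $Q$ with exactly these properties. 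So the promised ``carefully chosen countable sub-extraction'' producing a closed separable $H$ meeting $T_\varepsilon$ uncountably in non-right-isolated points cannot exist in general; any correct argument must exploit the countability of the index set of $(G_n)$ far more substantially than your pigeonhole does. The claim you need is true, but it is precisely (one direction of) \cite[Theorem~2.6]{CTlines}, which characterizes the $c_0$-extension property of $\phi$ by the condition that $A\cap Q(\phi)$ be countable for every separable $G_\delta$ set $A$ with $\Der(A,L)=A$; since such an $A$ satisfies $A\subseteq H\setminus H^+$ for $H=\overline A$ closed and separable, your hypothesis applies directly. The paper simply cites that theorem at this point; replacing your sketched contradiction argument with that citation closes the gap.
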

\begin{proof}
Assume (a). By Theorem~\ref{thm:c0ell1equivextend}, to prove (b) it suffices to prove that if $\phi$ has the $c_0$-extension property then any weak*-null family $(F_i)_{i\in I}$ in $\NBV(L)$ is of type $c_0\ell_1$ over $Q(\phi)$. Since $L$ is separably determined, this amounts to verifying that if $\phi$ has the $c_0$-extension property then for every separable closed subset $H$ of $K$, the set \eqref{eq:QinterHsetminusHplus} with $Q=Q(\phi)$ is countable. To this aim, pick a countable subset $I_0$
of $I$ as in Lemma~\ref{thm:lemaHsepJcountable} and note that, since $\phi$ has the $c_0$-extension property, the countable weak*-null family $(F_i)_{i\in I_0}$ is of type $c_0\ell_1$ over $Q(\phi)$ and hence, by Proposition~\ref{thm:atmostcardI}, the set $\big\{t\in Q(\phi):\big(F_i(t)\big)_{i\in I_0}\not\in c_0(I_0)\big\}$ is countable.

Now assume (b) and fix a subset $Q$ of $L$ and a weak*-null family $(F_i)_{i\in I}$ in $\NBV(L)$ such that the set \eqref{eq:QinterHsetminusHplus} is countable for every closed separable subset $H$ of $L$. We show that $(F_i)_{i\in I}$ is of type $c_0\ell_1$ over $Q$. Obviously, it is sufficient to check that $(F_i)_{i\in I}$ is of type $c_0\ell_1$ over $Q'=\big\{t\in Q:\big(F_i(t)\big)_{i\in I}\not\in c_0(I)\big\}$. Set $K=\DA(L;Q')$ and denote by
$\phi:K\to L$ the first projection, so that $\phi$ is a continuous increasing surjection and $Q(\phi)=Q'$ (Remark~\ref{thm:remgenDA}). According to \cite[Theorem~2.6]{CTlines}, $\phi$ has the $c_0$-extension property if and only if for every separable $G_\delta$ subset $A$ of $L$ such that $\Der(A,L)=A$ we have that $A\cap Q(\phi)$ is countable; for such a subset $A$ of $L$, we have that the closure $H$ of $A$ is a separable closed subset of $L$ and that $A\subset H\setminus H^+$, so that our assumptions on $(F_i)_{i\in I}$ imply that $A\cap Q(\phi)=A\cap Q'$ is countable. Hence $\phi$ has the $c_0$-extension property and also the $c_0(I)$-extension property, so that $(F_i)_{i\in I}$ is of type $c_0\ell_1$ over $Q(\phi)=Q'$, by Theorem~\ref{thm:c0ell1equivextend}.
\end{proof}

We will now prove that a compact line with a small set of ``bad points'' is separably determined. In this context, the ``bad points'' are the internal points of countable right character that do not have a separable adjacency in the sense defined below.
\begin{defin}
A point $t$ of a compact line $K$ is said to have a {\em separable adjacency\/} in $K$ if $t$ belongs to a separable convex subset of $K$ with more than one point; equivalently, if there exists $s\in\left]t,\max K\right]$ such that $[t,s]$ is separable or there exists $s\in\left[\min K,t\right[$ such that $[s,t]$ is separable.
\end{defin}

\begin{lem}\label{thm:propsepadj}
Let $K$ be a compact line, $Q$ be a subset of $K$ and $(F_i)_{i\in I}$ be a weak*-null family in $\NBV(K)$. If the set
\[\big\{t\in Q\cap H:\big(F_i(t)\big)_{i\in I}\not\in c_0(I)\big\}\]
is countable for every closed separable convex subset $H$ of $K$ then
$(F_i)_{i\in I}$ is of type $c_0\ell_1$ over the set of all points of $Q$ that have a separable adjacency in $K$.
\end{lem}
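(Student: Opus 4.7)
The plan is to decompose the set $Q_0$ of points of $Q$ with separable adjacency in $K$ into a pairwise disjoint union of convex subsets of $K$, each having more than one point and each supporting $(F_i)_{i\in I}$ in the $c_0\ell_1$ sense, and then to assemble them via Corollary~\ref{thm:corJlambda}. On $Q_0$ one defines an equivalence relation by $t\sim s$ iff either $t=s$ or some separable convex subset of $K$ with more than one point contains both $t$ and $s$; transitivity holds because the union of two separable convex subsets of a compact line sharing a common point is separable and convex. Letting $(E_\lambda)_{\lambda\in\Lambda}$ be the equivalence classes, each $E_\lambda$ has more than one point (the separable adjacency of any $t\in E_\lambda$ supplies a companion), and $E_\lambda$ is convex in $K$: if $a,b\in E_\lambda$ are witnessed by a common separable convex $J$, then any $u\in[a,b]\subset J$ has separable adjacency (via $J$) and lies in the same class as $a$. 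Consequently every subinterval $[a,b]\subset E_\lambda$ with $a,b\in E_\lambda$ is separable, and distinct $E_\lambda$'s have disjoint convex hulls.

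Next I would partition each $E_\lambda$ into pairwise disjoint separable convex pieces $(I_{\lambda,\gamma})_\gamma$, each with more than one point. If $E_\lambda$ itself is separable (which happens, for example, when it contains both its infimum and its supremum, or when its cofinality or coinitiality in $E_\lambda$ is countable), one takes the trivial partition. Otherwise, say $\sup E_\lambda\notin E_\lambda$ with cofinality $\kappa$ of $\sup E_\lambda$ in $E_\lambda$ uncountable. Fixing $t_0\in E_\lambda$, I would build a strictly increasing continuous cofinal sequence $(t_\alpha)_{\alpha<\kappa}$ in $E_\lambda^+=E_\lambda\cap[t_0,\max K]$ starting at $t_0$: at each limit stage $\alpha<\kappa$, the supremum $\sup_{\beta<\alpha}t_\beta$ lies strictly below $\sup E_\lambda$ by the regularity of $\kappa$, and therefore, by convexity of $E_\lambda$, inside $E_\lambda^+$. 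The half-open intervals $[t_\alpha,t_{\alpha+1})$ then partition $E_\lambda^+$ into convex subsets each with more than one point, and each is separable as a subset of the separable subinterval $[t_\alpha,t_{\alpha+1}]\subset E_\lambda$; a symmetric construction treats $E_\lambda^-$ when $\inf E_\lambda\notin E_\lambda$.

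For each piece $I_{\lambda,\gamma}$ the closure $\overline{I_{\lambda,\gamma}}$ is a closed separable convex subset of $K$, so by hypothesis $\{t\in Q\cap\overline{I_{\lambda,\gamma}}:(F_i(t))_{i\in I}\notin c_0(I)\}$ is countable, and since $I_{\lambda,\gamma}\subset Q_0\subset Q$, the ``bad'' subset of $I_{\lambda,\gamma}$ is countable as well. Corollary~\ref{thm:corc0ell1oncountable} then gives type $c_0\ell_1$ over this countable bad set, while over the complementary ``good'' set the trivial decomposition $a_{i,t}=F_i(t)$, $b_{i,t}=0$ works; Corollary~\ref{thm:corsigmaideal} then delivers type $c_0\ell_1$ over $I_{\lambda,\gamma}$. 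Since the collection of all $I_{\lambda,\gamma}$ is a pairwise disjoint family of convex subsets of $K$ with more than one point whose union is $Q_0$, Corollary~\ref{thm:corJlambda} applied with $Q_0$ in place of $Q$ concludes the proof.

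The main obstacle I anticipate is the transfinite partitioning of a non-separable $E_\lambda$ into separable convex pieces: one needs to verify that the continuous cofinal sequence can be kept inside $E_\lambda$ at every limit stage, which leans on the regularity of the cofinality cardinal $\kappa$ together with the convexity of $E_\lambda$ in $K$, and one must also treat symmetrically the situation where neither endpoint of $E_\lambda$ is attained. Once this partition is in hand the rest is a clean application of the ideal-theoretic machinery already developed in this section.
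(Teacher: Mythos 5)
Your proof is correct and follows essentially the same route as the paper's: decompose the set of points with separable adjacency into convex equivalence classes (via the relation ``some separable convex set with more than one point contains both''), partition each class by transfinite recursion into disjoint separable convex pieces with more than one point, apply the hypothesis together with Corollary~\ref{thm:corc0ell1oncountable} on each piece, and assemble with Corollary~\ref{thm:corJlambda}. The only cosmetic slips are that the equivalence classes should be taken inside the set of all points of $K$ with separable adjacency (not inside $Q_0$, which need not be convex) --- harmless since Corollary~\ref{thm:corJlambda} only needs $Q_0$ to be \emph{contained} in the union of the pieces --- and that one should choose the cofinal sequence so that no piece $[t_\alpha,t_{\alpha+1}[$ degenerates to a singleton, a detail the paper also leaves implicit.
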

\begin{proof}
Define a binary relation $\sim$ on $K$ by stating that $t\sim s$ if and only if the closed interval with extremities $t$ and $s$ is separable.
It is readily checked that $\sim$ is an equivalence relation on $K$ whose equivalence classes are convex. Moreover, a point of $K$ has a separable adjacency in $K$ if and only if its equivalence class has more than one point. If $J$ is an equivalence class of $\sim$ with more than one point then it is easily proven using transfinite recursion and the fact
that intervals with extremities in $J$ are separable that $J$ is a disjoint union of separable convex subsets of $K$ having more than one point.
Hence the set of all points of $K$ with a separable adjacency can be written as a disjoint union $\bigcup_{\lambda\in\Lambda}J_\lambda$ of a family $(J_\lambda)_{\lambda\in\Lambda}$ of separable convex subsets of $K$ having more than one point. By our assumptions, the set
$\big\{t\in Q\cap\overline{J_\lambda}:\big(F_i(t)\big)_{i\in I}\not\in c_0(I)\big\}$ is countable and thus by Corollary~\ref{thm:corc0ell1oncountable} the family $(F_i)_{i\in I}$ is of type $c_0\ell_1$ over $Q\cap J_\lambda$, for all $\lambda\in\Lambda$. The conclusion now follows from Corollary~\ref{thm:corJlambda}.
\end{proof}

\begin{teo}
Let $K$ be a compact line. If the set of internal points of $K$ that have countable right character and do not have a separable adjacency in $K$ can be covered by countably many
sets of finite two-sided height in $K$ then $K$ is separably determined.
\end{teo}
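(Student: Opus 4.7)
The plan is to partition $Q$ into finitely many pieces, show $(F_i)_{i\in I}$ is of type $c_0\ell_1$ over each, and reassemble via the $\sigma$-ideal structure of Corollary~\ref{thm:corsigmaideal}. Fix a subset $Q$ of $K$ and a weak*-null family $(F_i)_{i\in I}$ in $\NBV(K)$ such that $\{t\in Q\cap(H\setminus H^+):(F_i(t))_{i\in I}\notin c_0(I)\}$ is countable for every closed separable subset $H$ of $K$. Every $t\in Q$ either lies in $K^+\cup K^-$ or is internal; if internal, it has either uncountable right character or countable right character; and in the countable right character case, it may or may not have a separable adjacency in $K$. Accordingly, write $Q=A_1\cup A_2\cup A_3\cup A_4\cup A_5$, where $A_1=Q\cap K^+$, $A_2=Q\cap K^-$, $A_3$ is the set of internal points of $Q$ with uncountable right character, $A_4$ is the set of internal points of $Q$ with countable right character having a separable adjacency in $K$, and $A_5$ is the set of internal points of $Q$ with countable right character \emph{without} separable adjacency.

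The pieces $A_1$, $A_2$, $A_3$ are handled by remarks already developed: over $A_1$, the equality $\lim_{i\in I}F_i(t)=0$ gives the trivial decomposition; over $A_2$, one uses the value of $F_i$ at the immediate predecessor of $t$, as described in the discussion preceding Proposition~\ref{thm:finiteheight}; and over $A_3$, Lemma~\ref{thm:uncountablerightcharacter} yields $\lim_{i\in I}F_i(t)=0$, so again the decomposition is trivial.

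The real content lies in the piece $A_4$. The plan is to invoke Lemma~\ref{thm:propsepadj}, which requires that $\{t\in Q\cap H:(F_i(t))_{i\in I}\notin c_0(I)\}$ be countable for every closed separable \emph{convex} subset $H$ of $K$. The main obstacle is to verify that our hypothesis (stated in terms of $H\setminus H^+$) suffices. For a closed separable convex $H$, the points of $H^+$ fall in two categories: those right-isolated in $K$ (for which $\lim_{i\in I}F_i(t)=0$, hence they do not appear in the set in question) and at most one additional point, namely $\max H$. Hence the relevant set differs from the hypothesized one by at most a single point, and countability is preserved. Lemma~\ref{thm:propsepadj} then yields the $c_0\ell_1$ property over all of $Q$ having a separable adjacency in $K$, and in particular over $A_4$.

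Finally, the piece $A_5$ is precisely where the hypothesis of the theorem is applied: by assumption, $A_5$ is contained in a countable union of subsets of $K$ of finite two-sided height. Proposition~\ref{thm:finiteheight} delivers the $c_0\ell_1$ property on each such subset, and Corollary~\ref{thm:corsigmaideal} ($\sigma$-ideal closure) combines them to give the $c_0\ell_1$ property on $A_5$. A last application of Corollary~\ref{thm:corsigmaideal} (finite unions) assembles $A_1,\dots,A_5$ into the conclusion that $(F_i)_{i\in I}$ is of type $c_0\ell_1$ over $Q$, proving that $K$ is separably determined.
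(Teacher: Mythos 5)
Your proof is correct and takes essentially the same approach as the paper: the paper's proof is a one-line citation of exactly the four ingredients you use (Lemma~\ref{thm:uncountablerightcharacter}, Lemma~\ref{thm:propsepadj}, Corollary~\ref{thm:corsigmaideal} and Proposition~\ref{thm:finiteheight}). Your write-up merely makes explicit the underlying five-piece decomposition of $Q$ and the observation (already used in the proof of Theorem~\ref{thm:teoHseparable}) that for convex $H$ the sets $H$ and $H\setminus H^+$ differ, as far as points with $\big(F_i(t)\big)_{i\in I}\notin c_0(I)$ are concerned, by at most the single point $\max H$.
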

\begin{proof}
It follows from Lemmas~\ref{thm:uncountablerightcharacter} and \ref{thm:propsepadj}, Corollary~\ref{thm:corsigmaideal} and Proposition~\ref{thm:finiteheight}.
\end{proof}

We now prove a few closure properties for the class of separably determined compact lines. First let us show that the property of being separably determined is hereditary to closed subsets. We need a lemma.
\begin{lem}\label{thm:lemaHplusminusKplus}
If $K$ is a separable compact line and $H$ is a closed subset of $K$ then the set $H^+\setminus K^+$ is countable.
\end{lem}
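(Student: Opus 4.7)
The plan is to use separability of $K$ to inject the "bad" points $t \in H^+\setminus K^+$ into a countable dense subset of $K$, by attaching to each such $t$ a suitable nonempty open interval of $K$, disjoint from $H$, and showing these intervals are pairwise disjoint.

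First I would dispose of the trivial case where $t=\max H$: this contributes at most one point to $H^+\setminus K^+$, so it can be ignored. For the remaining points, fix $t\in H^+\setminus K^+$ with $t\ne\max H$. Since $t\in H^+$, there exists $s_t\in H$ with $s_t>t$ and $\left]t,s_t\right[\cap H=\emptyset$. Since $t\notin K^+$ and $t<s_t\le\max K$, the interval $\left]t,s_t\right[$ must be nonempty in $K$. Thus each such $t$ is assigned a nonempty open interval $J_t=\left]t,s_t\right[$ of $K$ which is disjoint from $H$.

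The key geometric observation is that the family $\{J_t\}$ is pairwise disjoint. Indeed, if $t_1<t_2$ are two such points, then $t_2\in H$ together with $\left]t_1,s_{t_1}\right[\cap H=\emptyset$ forces $t_2\ge s_{t_1}$, so $J_{t_1}$ lies entirely at or below $t_2$ while $J_{t_2}$ lies strictly above $t_2$; hence $J_{t_1}\cap J_{t_2}=\emptyset$.

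Finally, let $D$ be a countable dense subset of $K$, which exists by separability. Every nonempty open subset of $K$ meets $D$, so I can pick $d_t\in J_t\cap D$ for each $t$ in question. Pairwise disjointness of the $J_t$'s makes the assignment $t\mapsto d_t$ injective, so the set of such $t$ injects into the countable set $D$. Combining with the at most one point $\max H$, the set $H^+\setminus K^+$ is countable. The whole argument is essentially a single page of set-theoretic bookkeeping, and I do not anticipate any serious obstacle beyond correctly tracking the endpoint case $t=\max H$.
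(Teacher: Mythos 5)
Your proof is correct and takes essentially the same route as the paper's: both attach to each $t\in H^+\setminus\big(K^+\cup\{\max H\}\big)$ the nonempty open interval of $K$ between $t$ and its immediate successor in $H$, note that these intervals are pairwise disjoint, and conclude countability from the separability of $K$. No issues.
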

\begin{proof}
For each $t\in H^+\setminus\{\max H\}$, denote by $\lambda(t)$ the immediate successor of $t$ in $H$ and note that the intervals
$\left]t,\lambda(t)\right[$ with $t\in H^+\setminus\big(K^+\cup\{\max H\}\big)$ are open, nonempty and pairwise disjoint. Hence the separability of $K$ implies that
there can be only countably many such intervals.
\end{proof}

\begin{prop}
If $K$ is a separably determined compact line and $L$ is a closed subset of $K$ then $L$ is a separably determined compact line.
\end{prop}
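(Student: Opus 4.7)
The plan is to transfer the problem from $L$ to $K$ by extending each $F_i \in \NBV(L)$ to an element $\tilde F_i \in \NBV(K)$ and then invoking that $K$ is separably determined. The natural extension is obtained via push-forward of measures along the inclusion $\iota : L \hookrightarrow K$: if $\mu_i \in M(L)$ is the measure associated to $F_i$ by Proposition~\ref{thm:propNBVK}, let $\tilde\mu_i = \iota_*(\mu_i) \in M(K)$ and let $\tilde F_i \in \NBV(K)$ be the corresponding right-continuous function. One checks that $\tilde F_i(t) = F_i\big(\!\max(L \cap [\min K, t])\big)$ when this maximum exists and $\tilde F_i(t) = 0$ otherwise; in particular $\tilde F_i\vert_L = F_i$. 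Because $\iota_*$ is the adjoint of the bounded restriction operator $C(K) \to C(L)$, $f \mapsto f\vert_L$, weak*-nullity transfers from $(\mu_i)_{i \in I}$ to $(\tilde\mu_i)_{i \in I}$, so $(\tilde F_i)_{i \in I}$ is weak*-null in $\NBV(K)$.

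The key step is to check that $(\tilde F_i)_{i \in I}$ together with $Q$, now viewed as a subset of $K$, satisfies the hypothesis of Definition~\ref{thm:defseparablydetermined} for $K$. Fix a closed separable subset $H$ of $K$ and set $H_L = H \cap L$, a closed separable subset of $L$. Since $Q \subseteq L$ and $\tilde F_i = F_i$ on $L$, the set to be shown countable is
\begin{equation*}
\big\{t \in Q \cap H_L : t \notin H^+,\ \big(F_i(t)\big)_{i \in I} \notin c_0(I)\big\}.
\end{equation*}
A direct verification yields $H^+ \cap H_L \subseteq H_L^+$: for $t \in H^+ \cap H_L$, if $t$ has an immediate successor $s$ in $H$ then every element of $H_L$ strictly greater than $t$ is $\ge s$, so $t$ is right-isolated or maximal in $H_L$. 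Consequently the displayed set is contained in
\begin{equation*}
\big\{t \in Q \cap (H_L \setminus H_L^+) : \big(F_i(t)\big)_{i \in I} \notin c_0(I)\big\} \cup \big(H_L^+ \setminus H^+\big),
\end{equation*}
where the first piece is countable by the hypothesis on $L$ applied to the closed separable subset $H_L$, and the second is countable by Lemma~\ref{thm:lemaHplusminusKplus} applied to the separable compact line $H$ and its closed subset $H_L$.

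With this verified, the hypothesis that $K$ is separably determined yields that $(\tilde F_i)_{i \in I}$ is of type $c_0\ell_1$ over $Q$, and since $\tilde F_i = F_i$ on $Q$ the family $(F_i)_{i \in I}$ is of type $c_0\ell_1$ over $Q$, proving that $L$ is separably determined. The main obstacle is the discrepancy between right-isolation relative to $H$ and relative to $H_L$: without Lemma~\ref{thm:lemaHplusminusKplus}, the set $H_L^+ \setminus H^+$ could carry uncountably many points of $Q$ for which the hypothesis on $L$ provides no control, and the argument would collapse. Lemma~\ref{thm:lemaHplusminusKplus} is precisely the ingredient that bridges this gap, which explains why it is established immediately before the proposition.
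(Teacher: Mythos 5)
Your proof is correct and follows essentially the same route as the paper's: push the $F_i$ forward along the inclusion $\iota:L\hookrightarrow K$, reduce the hypothesis for a closed separable $H\subseteq K$ to the hypothesis for $H\cap L$ in $L$, and absorb the discrepancy between $H^+$ and $(H\cap L)^+$ via Lemma~\ref{thm:lemaHplusminusKplus}. The only step you assert without justification is that $H\cap L$ is separable --- this is not automatic for a closed subset of a separable space, and the paper covers it by invoking the fact that separable compact lines are hereditarily separable (\cite[Lemma~2.9]{CTlines}).
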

\begin{proof}
Let $Q$ be a subset of $L$ and $(F_i)_{i\in I}$ be a weak*-null family in $\NBV(L)$ such that the set $\big\{t\in Q\cap(H\setminus H^+):\big(F_i(t)\big)_{i\in I}\not\in c_0(I)\big\}$ is countable for every closed separable subset $H$ of $L$. Let us prove that $(F_i)_{i\in I}$ is of type $c_0\ell_1$ over $Q$. Denote by $\iota:L\to K$ the inclusion map and set $G_i=\iota_*(F_i)$ for all $i\in I$. We have that $(G_i)_{i\in I}$ is a weak*-null family in $\NBV(K)$ and it is readily seen that $G_i$ extends $F_i$, for all $i\in I$. Since $K$ is separably determined, the conclusion will follow if we show that for a closed separable subset $H$ of $K$ the set $\big\{t\in Q\cap(H\setminus H^+):\big(G_i(t)\big)_{i\in I}\not\in c_0(I)\big\}$ is countable.
To this aim, note first that $\lim_{i\in I}G_i(t)=0$ for $t\in K\setminus L$; namely, if $[\min K,t]\cap L=\emptyset$ then $G_i(t)=0$ for all $i\in I$ and otherwise $G_i(t)=F_i(s)$ for all $i\in I$, where $s=\max\!\big([\min K,t]\cap L\big)$ is in $L^+$. Since a separable compact line is hereditarily separable (\cite[Lemma~2.9]{CTlines}) we have that $H\cap L$ is a separable closed subset of $L$ and thus by our assumptions on $(F_i)_{i\in I}$ we have that the set
$\big\{t\in Q\cap\big((H\cap L)\setminus(H\cap L)^+\big):\big(F_i(t)\big)_{i\in I}\not\in c_0(I)\big\}$ is countable. To conclude the proof now apply Lemma~\ref{thm:lemaHplusminusKplus} to the separable compact line $H$ to see that the set $(H\cap L)^+\setminus H^+$ is countable.
\end{proof}

We note that the image of a continuous increasing map with separably determined domain is not always separably determined since every compact
line with no internal points is separably determined and every compact line $K$ is the image of a continuous increasing map whose domain has no internal points
(for instance, the first projection of $\DA(K;K)$ --- see Remark~\ref{thm:remgenDA}). Moreover, not every compact line is separably determined, as we will see in Section~\ref{sec:notsepdet}.

Our next result states that the class of separably determined compact lines is closed under the lexicographic-ordered product construction described in Remark~\ref{thm:remgenDA}.
\begin{teo}\label{thm:teolexprodsepdet}
Let $\phi:K\to L$ be a continuous increasing surjection between compact lines. If $L$ is separably determined and $\phi^{-1}(t)$ is separably determined for all $t\in L$ then $K$ is separably determined.
\end{teo}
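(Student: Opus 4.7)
The plan is to fix a weak*-null family $(G_i)_{i\in I}$ in $\NBV(K)$ and a subset $Q\subset K$ satisfying the hypothesis in Definition~\ref{thm:defseparablydetermined}, and to show that $(G_i)_{i\in I}$ is of type $c_0\ell_1$ over $Q$. I would decompose $Q = A \cup B$, where $A = Q\cap\phi^{-1}(L\setminus Q(\phi))$ collects the points sitting in singleton fibers and $B = Q\cap\phi^{-1}(Q(\phi))$ collects those sitting in nontrivial fibers, treat each piece with a different separable determinedness hypothesis, and assemble the two conclusions using Corollary~\ref{thm:corsigmaideal}.

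For $B$: For each $t\in Q(\phi)$, Lemma~\ref{thm:weaknullGicrit} gives that $\bigl((R_t)_*(G_i)\bigr)_{i\in I}$ is weak*-null in $\NBV(\phi^{-1}(t))$. Every closed separable subset of $\phi^{-1}(t)$ is a closed separable subset of $K$, and $(R_t)_*(G_i)$ agrees with $G_i$ on $\phi^{-1}(t)\setminus\{\beta_t\}$, so the $K$-hypothesis transfers to $\phi^{-1}(t)$. Since $\phi^{-1}(t)$ is separably determined, $((R_t)_*(G_i))_{i\in I}$ is of type $c_0\ell_1$ over $Q\cap\phi^{-1}(t)$; as the two families agree on $Q\cap\phi^{-1}(t)$ apart from possibly the single point $\beta_t$ and finite sets lie in the $c_0\ell_1$ ideal for pointwise bounded families, $(G_i)_{i\in I}$ is also of type $c_0\ell_1$ over $Q\cap\phi^{-1}(t)$. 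Applying Corollary~\ref{thm:corJlambda} to the pairwise disjoint family of convex subsets $(\phi^{-1}(t))_{t\in Q(\phi)}$, I conclude that $(G_i)_{i\in I}$ is of type $c_0\ell_1$ over $B$.

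For $A$: Put $F_i = \phi_*(G_i)$ (weak*-null in $\NBV(L)$) and $Q_A = \phi(A) \subset L\setminus Q(\phi)$. The restriction of $\phi$ to $\phi^{-1}(L\setminus Q(\phi))$ is a bijection onto $L\setminus Q(\phi)$ with inverse $s(t) = \max\phi^{-1}(t)$, and $s$ is continuous: for an open $U\ni s(t_0)$ in $K$, the set $L\setminus\phi(K\setminus U)$ is an open neighborhood of $t_0$ in $L$ whose intersection with $L\setminus Q(\phi)$ is mapped into $U$ by $s$. Since $F_i(t) = G_i(s(t))$ for $t\in L\setminus Q(\phi)$, applying the separable determinedness of $L$ to $(F_i)_{i\in I}$ and $Q_A$ reduces matters to showing that, for every closed separable $H'\subset L$, the set $\{t\in Q_A\cap(H'\setminus {H'}^+) : (F_i(t))_{i\in I}\notin c_0(I)\}$ is countable. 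To do this, I would pick a countable dense subset $D'\subset H'$ and set $H = \overline{\{s(t'): t'\in D'\cap(L\setminus Q(\phi))\}\cup\{\beta_{t'}:t'\in D'\}}^K$, a closed separable subset of $K$ (closure of a countable set). For each $t$ in the set above, the non-right-isolation of $t$ in $H'$ together with the density of $D'$ produces $t_n\in D'$ with $t_n\downarrow t$ and $t_n > t$; since $t\in L\setminus Q(\phi)$, the sequence $\beta_{t_n}$ is decreasing and bounded below by $s(t)$, and by continuity of $\phi$ any limit lies in the singleton fiber $\{s(t)\}$, so $\beta_{t_n}\downarrow s(t)$ strictly from above. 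Hence $s(t)\in H\setminus H^+$. As $s(t)\in A\subset Q$ and $(G_i(s(t)))_{i\in I} = (F_i(t))_{i\in I}$, the injective map $s$ sends our set into $\{x\in Q\cap(H\setminus H^+) : (G_i(x))_{i\in I}\notin c_0(I)\}$, which is countable by the $K$-hypothesis. Thus $(F_i)_{i\in I}$ is of type $c_0\ell_1$ over $Q_A$, and translating the decomposition along $s$ shows $(G_i)_{i\in I}$ is of type $c_0\ell_1$ over $A$.

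The main obstacle I anticipate is the choice of the closed separable subset of $K$ used in the verification for $A$: the naive candidate $\overline{s(H')}^K$ is in general not separable, because up to order isomorphism it is a generalized double-arrow over the set of points of $H'\cap Q(\phi)$ that are not left-isolated in $H'$, and this set can be uncountable. It is therefore essential to close the image of only a countable dense subset of $H'$, and to use the continuity of $s$ on $L\setminus Q(\phi)$ so that the downward-from-above approach to $t$ within $D'$ translates into the corresponding approach of $s(t)$ within $H$.
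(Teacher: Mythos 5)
Your proposal is correct and follows essentially the same route as the paper: split $Q$ into the part lying in nontrivial fibers (handled via the canonical retractions, separable determinedness of the fibers, and Corollary~\ref{thm:corJlambda}) and the part lying in singleton fibers (pushed forward to $L$ via $\phi_*$, verifying the hypothesis for a closed separable $H'\subset L$ by lifting a countable dense subset of $H'$ to a closed separable subset of $K$). The only difference is cosmetic: you lift the dense set via $t\mapsto\max\phi^{-1}(t)$ and argue directly that the relevant points avoid $H^+$, whereas the paper takes the closure of an arbitrary countable lift; both work.
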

\begin{proof}
Let $Q$ be a subset of $K$ and $(F_i)_{i\in I}$ be a weak*-null family in $\NBV(K)$ such that the set $\big\{s\in Q\cap(H\setminus H^+):\big(F_i(s)\big)_{i\in I}\not\in c_0(I)\big\}$ is countable for every closed separable subset $H$ of $K$. We show that $(F_i)_{i\in I}$ is of type $c_0\ell_1$ over $Q$. For each $t\in L$, denote by $R_t$ the canonical retraction of $K$ onto $\phi^{-1}(t)$ and set $F^t_i=(R_t)_*(F_i)$ for all $i\in I$ and $t\in L$, so that $(F^t_i)_{i\in I}$ is a weak*-null family in $\NBV\!\big(\phi^{-1}(t)\big)$ for all $t\in L$. Given $t\in L$, the maps $F^t_i$ and $F_i$ are equal on $\phi^{-1}(t)\setminus\{\max\phi^{-1}(t)\}$ for all $i\in I$ and thus we have that the set $\big\{s\in Q\cap\phi^{-1}(t)\cap(H\setminus H^+):\big(F^t_i(s)\big)_{i\in I}\not\in c_0(I)\big\}$ is countable for every closed separable subset $H$ of $\phi^{-1}(t)$. Since $\phi^{-1}(t)$ is separably determined, we obtain that $(F^t_i)_{i\in I}$ and hence $(F_i)_{i\in I}$ is of type $c_0\ell_1$ over $Q\cap\phi^{-1}(t)$ for all $t\in L$. Corollary~\ref{thm:corJlambda} then yields that $(F_i)_{i\in I}$ is of type $c_0\ell_1$ over $Q\cap\bigcup_{t\in Q(\phi)}\phi^{-1}(t)$. To conclude the proof, we need to show that $(F_i)_{i\in I}$ is of type $c_0\ell_1$ over the set:
\[Q'=Q\setminus\bigcup_{t\in Q(\phi)}\phi^{-1}(t)=\big\{s\in Q:\phi^{-1}\big(\phi(s)\big)=\{s\}\big\}.\]
For each $i\in I$, set $G_i=\phi_*(F_i)$, so that $(G_i)_{i\in I}$ is a weak*-null family in $\NBV(L)$. We have that $\phi\vert_{Q'}$ is injective and that $G_i\big(\phi(s)\big)=F_i(s)$ for all $s\in Q'$ and therefore to prove that $(F_i)_{i\in I}$ is of type $c_0\ell_1$ over $Q'$ it is sufficient to prove that $(G_i)_{i\in I}$ is of type $c_0\ell_1$ over $\phi[Q']$. Since $L$ is separably determined, the latter will be established once we check that the
set
\begin{equation}\label{eq:tinphiQprime}
\big\{t\in\phi[Q']\cap(H\setminus H^+):\big(G_i(t)\big)_{i\in I}\not\in c_0(I)\big\}
\end{equation}
is countable for an arbitrary closed separable subset $H$ of $L$. Let $\widetilde H$ be the closure of a countable subset of $K$ whose image under $\phi$ is a countable dense subset of $H$. We have that $\widetilde H$ is a closed separable subset of $K$ and $\phi[\widetilde H]=H$. From the separability of $\widetilde H$, it follows that the set
\begin{equation}\label{eq:sinQ}
\big\{s\in Q\cap(\widetilde H\setminus\widetilde H^+):\big(F_i(s)\big)_{i\in I}\not\in c_0(I)\big\}
\end{equation}
is countable. Given $t\in\phi[Q']\cap H$, we have that $t=\phi(s)$ for a unique $s\in K$; moreover, $s$ is in $Q'\cap\widetilde H$, $G_i(t)=F_i(s)$ for all $i\in I$ and $t$ belongs to $H^+$ if and only if $s$ belongs to $\widetilde H^+$. Hence \eqref{eq:tinphiQprime} is contained in the image of \eqref{eq:sinQ} under $\phi$ and we are done.
\end{proof}

\begin{cor}\label{thm:corprodsepdet}
The cartesian product of two separably determined compact lines endowed with the lexicographic order is a separably determined compact line.
\end{cor}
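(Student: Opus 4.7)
The plan is to deduce the corollary as an immediate application of Theorem~\ref{thm:teolexprodsepdet}. Let $L_1$ and $L_2$ be separably determined compact lines and set $K = L_1 \times L_2$ with the lexicographic order. I would first invoke the construction of Remark~\ref{thm:remgenDA} (taking $L_t = L_2$ for every $t \in L_1$) to note that $K$ is a compact line and that the first projection $\pi_1 : K \to L_1$ is a continuous increasing surjection.

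Next I would check the two hypotheses of Theorem~\ref{thm:teolexprodsepdet} for $\phi = \pi_1$. The base $L_1$ is separably determined by assumption. For each $t \in L_1$, the fiber $\pi_1^{-1}(t) = \{t\} \times L_2$ is order-isomorphic (hence homeomorphic) to $L_2$, so the property of being separably determined transfers directly, and $\pi_1^{-1}(t)$ is separably determined.

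Applying Theorem~\ref{thm:teolexprodsepdet} then gives that $K$ is separably determined, which is the statement of the corollary. No real obstacle is expected here; the only thing worth being careful about is making sure the order isomorphism between $\{t\} \times L_2$ and $L_2$ genuinely transports the notion of separably determined compact line, but since that notion is expressed purely in terms of the compact line structure (weak*-null families in $\NBV$, the set $H^+$, subsets $Q$, $c_0\ell_1$), this transfer is automatic.
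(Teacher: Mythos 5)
Your proposal is correct and is exactly the paper's proof: apply Theorem~\ref{thm:teolexprodsepdet} to the first projection $\pi_1:L_1\times L_2\to L_1$, noting that each fiber is order-isomorphic to $L_2$ and hence separably determined. The extra remark that the notion transfers along order isomorphisms is a fine (if routine) point of care.
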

\begin{proof}
Apply Theorem~\ref{thm:teolexprodsepdet} with $\phi$ equal to the first projection.
\end{proof}

\end{section}

\begin{section}{Examples of compact lines that are not separably determined}
\label{sec:notsepdet}

We have seen in Corollary~\ref{thm:corprodsepdet} that the class of separably determined compact lines is closed under finite products, where products are endowed with the lexicographic order. In this section we show that the situation is different for infinite products.

In what follows, we fix an arbitrary uncountable compact line $L$ and we denote by $K=L^\omega$ the set of all sequences in $L$ endowed with the lexicographic order. We have that $K$ is a compact line and the main goal of this section is to prove that $K$ is not separably determined.
Let $K^0$ (resp., $K^1$) denote the subset of $K$ consisting of sequences $t$ such that $t_n=\min L$ (resp., $t_n=\max L$) for all but finitely many $n\in\omega$. For any set $R$, let $R^{<\omega}=\bigcup_{n\in\omega}R^n$ be the set of all finite sequences in $R$. Set $\mathcal E=L^{<\omega}$ and for each $\epsilon\in\mathcal E$ let $C_\epsilon=\big\{t\in K:\epsilon\subset t\big\}$ denote the set of sequences in $L$ that extend $\epsilon$ and $F_\epsilon$ denote the characteristic function of $C_\epsilon\setminus\{\max C_\epsilon\}$. We have that $C_\epsilon$ is a closed convex subset of $K$ and that $F_\epsilon$ is an element of $\NBV(K)$ with $\Vert F_\epsilon\Vert_{\BV}=2$, for all $\epsilon\in\mathcal E$. We will use the family $(F_\epsilon)_{\epsilon\in\mathcal E}$ to establish that $K$ is not separably determined. The first step is to check that it is weak*-null.
\begin{lem}
The family $(F_\epsilon)_{\epsilon\in\mathcal E}$ is weak*-null in $\NBV(K)$.
\end{lem}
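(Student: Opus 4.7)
The plan is to apply Theorem~\ref{thm:charactweakstarnull} and verify its three conditions (a), (b), (c). Condition (a) is immediate since $\Vert F_\epsilon\Vert_{\BV}=2$ for every $\epsilon\in\mathcal E$.

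For condition (c) I would take $k=1$ and rely on the following combinatorial observation: for any two distinct $t,s\in K$, the set of $\epsilon\in\mathcal E$ that are common initial segments of $t$ and $s$ is finite, its length being bounded by the first coordinate where $t$ and $s$ differ. Since $F_\epsilon(t)=1$ forces $\epsilon\subset t$, this yields that for any finite set $S\subset K$ the collection
\[\Big\{\epsilon\in\mathcal E:\sum_{t\in S}F_\epsilon(t)\ge2\Big\}\]
is contained in the union over unordered pairs $\{t,s\}\subset S$ of the finite sets of common initial segments of $t$ and $s$, hence is finite. Consequently $\sum_{t\in S}F_\epsilon(t)\le1$ for all but finitely many $\epsilon$, which gives (c).

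Condition (b) is the delicate point because it requires an analysis of the right-isolated points of the lexicographic power $K=L^\omega$. If $t=\max K$, then $\epsilon\subset t$ forces $t=\max C_\epsilon$, so $F_\epsilon(t)=0$ identically. Otherwise $t$ has an immediate successor in $K$, and a direct inspection of the lexicographic order shows there must exist $n\in\omega$ such that $t_m=\max L$ for all $m>n$ and $t_n$ has an immediate successor in $L$ (otherwise one could always insert a further point between $t$ and any candidate successor). For every $\epsilon\subset t$ with $\vert\epsilon\vert>n$ one checks that $t=\max C_\epsilon$ and hence $F_\epsilon(t)=0$, so the only $\epsilon$ that can satisfy $F_\epsilon(t)=1$ lie in the finite set $\{t\vert_0,t\vert_1,\ldots,t\vert_n\}$. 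Thus $\lim_\epsilon F_\epsilon(t)=0$.

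The main obstacle I foresee is pinning down the structure of the right-isolated points of $L^\omega$ in step (b); once that is in hand, both (b) and (c) follow cleanly from the same elementary principle that two distinct sequences in $L^\omega$ share only finitely many common initial segments.
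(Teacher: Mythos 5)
Your proposal is correct and follows essentially the same route as the paper: both apply Theorem~\ref{thm:charactweakstarnull}, verify (c) from the fact that two distinct sequences share only finitely many common initial segments, and verify (b) by observing that every point of $K^+$ is eventually equal to $\max L$, so that $F_\epsilon(t)\ne0$ for only finitely many $\epsilon$. Your more explicit analysis of the immediate-successor structure in step (b) is accurate but only fleshes out the paper's terser remark that $K^+\subset K^1$.
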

\begin{proof}
We use Theorem~\ref{thm:charactweakstarnull}. Given $t,s\in K$ with $t\ne s$, we have that $t$ and $s$ are both in $C_\epsilon$ only for finitely many $\epsilon\in\mathcal E$ and thus for any finite subset $S$ of $K$ we have that $\sum_{t\in S}\vert F_\epsilon(t)\vert\le1$ for all but finitely many $\epsilon\in\mathcal E$. To conclude the proof,
observe that $K^+$ is contained in $K^1$ and that for $t\in K^1$ we have $F_\epsilon(t)=0$ for all but finitely many $\epsilon\in\mathcal E$.
\end{proof}

Below it will be convenient to consider the topology $\tau$ on $K=L^\omega$ which is defined as the product topology with each factor $L$ endowed with the discrete topology. The topology $\tau$ is easier to work with than the order topology of $K$ and they are quite similar. More precisely, the topology $\tau$ is finer than the order topology of $K$ and for all $t\in K\setminus(K^0\cup K^1)$ the neighborhoods of $t$ in the order topology are the same as the neighborhoods of $t$ in the topology $\tau$.

Our strategy now is to find a suitable subset $Q$ of $K$ such that $(F_\epsilon)_{\epsilon\in\mathcal E}$ is not of type $c_0\ell_1$ over $Q$ and such that $Q\cap(H\setminus H^+)$ is countable, for every closed separable subset $H$ of $K$. We start by establishing a useful equivalence for the latter condition.
\begin{lem}\label{thm:lemmaEomega}
For a subset $Q$ of $K\setminus(K^0\cup K^1)$, the following conditions are equivalent:
\begin{itemize}
\item[(a)] $Q\cap E^\omega$ is countable for every countable subset $E$ of $L$;
\item[(b)] $Q\cap H$ is countable for every separable subset $H$ of $K$;
\item[(c)] $Q\cap(H\setminus H^+)$ is countable for every closed separable subset $H$ of $K$.
\end{itemize}
\end{lem}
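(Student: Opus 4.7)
The plan is to exploit the auxiliary topology $\tau$ on $K=L^\omega$ (product of discrete topologies on $L$) mentioned in the paragraph preceding the lemma, using the two facts stated there: $\tau$ is finer than the order topology on $K$, and for every $t\in K\setminus(K^0\cup K^1)$ the neighborhood filter of $t$ in the order topology coincides with its $\tau$-neighborhood filter. The key technical observation is that for any $E\subset L$, the product $E^\omega=\prod_n E$ is $\tau$-closed, since each factor is closed in the discrete topology of $L$.

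For $(a)\Rightarrow(b)$, given a separable $H\subset K$, I would choose a countable dense $D\subset H$ (order topology) and let $E$ be the countable set of all coordinates of all elements of $D$, so that $D\subset E^\omega$. Then for any $t\in Q\cap H\subset\overline D^{\mathrm{order}}$, the hypothesis $Q\subset K\setminus(K^0\cup K^1)$ combined with the coincidence of topologies at $t$ shows that every $\tau$-neighborhood of $t$ is an order-neighborhood and therefore meets $D$; hence $t\in\overline D^{\tau}\subset E^\omega$. Thus $Q\cap H\subset Q\cap E^\omega$ is countable by $(a)$. The implication $(b)\Rightarrow(c)$ is trivial since $H\setminus H^+\subset H$.

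For $(c)\Rightarrow(a)$, given a countable $E\subset L$, set $E'=E\cup\{\min L,\max L\}$ and let $D$ be the countable set of all sequences in $(E')^\omega$ that are eventually constantly equal to $\min L$ or eventually constantly equal to $\max L$. Put $H=\overline D$ in the order topology, so that $H$ is a closed separable subset of $K$. I would then show $Q\cap E^\omega\subset H\setminus H^+$, which by $(c)$ forces $Q\cap E^\omega$ to be countable. Fix $t\in Q\cap E^\omega$ and consider, for each $n\ge0$,
\[t^{(n)}=(t_0,\ldots,t_{n-1},\min L,\min L,\ldots),\quad\tilde t^{(n)}=(t_0,\ldots,t_{n-1},\max L,\max L,\ldots),\]
both in $D$. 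Since $t\notin K^0\cup K^1$ (as $Q$ avoids this set), for every $n$ the tail of $t$ past position $n$ contains some coordinate strictly greater than $\min L$ and some coordinate strictly less than $\max L$, yielding $t^{(n)}<t<\tilde t^{(n)}$. Both sequences $\tau$-converge to $t$ because they agree with $t$ on the first $n$ coordinates, and by the coincidence of topologies at $t$ they converge to $t$ in the order topology as well. Therefore $t\in H$ is not the maximum of $H$, and there cannot exist $s\in H$ with $s>t$ and $\left]t,s\right[\cap H=\emptyset$ (eventually $\tilde t^{(n)}\in\left]t,s\right[$), so $t\in H\setminus H^+$.

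The only delicate step is the construction of the countable set $D$ in the third implication: it must simultaneously be countable, have its order-closure contain $E^\omega\cap(K\setminus(K^0\cup K^1))$, and fail to right-isolate the points of $Q\cap E^\omega$ in its closure. Taking both $\min L$- and $\max L$-tails of finite prefixes achieves all three requirements at once, and everything else follows formally from the interplay between $\tau$ and the order topology at points of $K\setminus(K^0\cup K^1)$.
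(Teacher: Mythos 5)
Your proof is correct and follows essentially the same route as the paper: both the implication (a)$\Rightarrow$(b) and the implication (c)$\Rightarrow$(a) hinge on the $\tau$-closedness of $E^\omega$ together with the coincidence of the order topology and $\tau$ at points outside $K^0\cup K^1$. The only difference is cosmetic and occurs in (c)$\Rightarrow$(a): the paper takes $H=\overline{E^\omega}$ with $\max L$ adjoined to $E$ (using that $E^\omega$ is $\tau$-separable, hence separable) and checks directly that $E^\omega\setminus K^1$ misses $H^+$, whereas you take $H$ to be the closure of an explicit countable set of eventually-constant sequences and approximate each $t\in Q\cap E^\omega$ from both sides; the verification that such $t$ is not right-isolated in $H$ is the same in substance.
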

\begin{proof}
Assume (a) and let us prove (b). If $H$ is a separable subset of $K$ then a countable dense subset of $H$ is contained in $E^\omega$ for some countable subset $E$ of $L$ and thus $H$ is contained in $\overline{E^\omega}$. Now, since $E^\omega$ is $\tau$-closed, we have that $\overline{E^\omega}$
is contained in $E^\omega\cup K^0\cup K^1$ and hence (a) and the fact that $Q\cap(K^0\cup K^1)=\emptyset$ imply that $Q\cap H$ is countable. Now assume (c) and let us prove (a). Given a countable subset $E$ of $L$ with $\max L\in E$ we show that $Q\cap E^\omega$ is countable. Since $E^\omega$ is $\tau$-separable, it follows that $E^\omega$ is separable and thus $H=\overline{E^\omega}$ is a closed separable subset of $K$. Using that $\max L\in E$ it is easily checked that
for all $t\in E^\omega\setminus K^1$ and every $s\in K$ with $s>t$ we have that $\left]t,s\right[\cap E^\omega\ne\emptyset$. Hence
$E^\omega\setminus K^1$ is disjoint from $H^+$ and it follows from (c) that $Q\cap(E^\omega\setminus K^1)=Q\cap E^\omega$ is countable.
\end{proof}

In order to write a helpful equivalence for the condition that $(F_\epsilon)_{\epsilon\in\mathcal E}$ is not of type $c_0\ell_1$ over a subset $Q$ of $K$,
we need to introduce a topological notion. We will say more about how such notion is related to families of type $c_0\ell_1$ in general later in Subsection~\ref{sub:countsepc0ell1}.
\begin{defin}\label{thm:defcountsep}
Given a topological space $X$, by a {\em countably separating family\/} for $X$ we mean a family $(V_x)_{x\in X}$ such that $V_x$ is a neighborhood of $x$, for all $x\in X$, and the following condition holds: given a subset $M$ of $X$, if $x\in V_y$ for all $x,y\in M$, then $M$ is countable.
\end{defin}

Below we will be interested also in certain subfamilies of $(F_\epsilon)_{\epsilon\in\mathcal E}$ rather than just in $(F_\epsilon)_{\epsilon\in\mathcal E}$ itself, so that later we will be able to prove that $K$ is not separably determined using a weak*-null family indexed by a set of cardinality $\omega_1$.
\begin{lem}\label{thm:lemmaseparating}
Let $R$ be a subset of $L\setminus\{\max L\}$ and $Q$ be a subset of $R^\omega$. The following conditions are equivalent:
\begin{itemize}
\item[(i)] the family $(F_\epsilon)_{\epsilon\in R^{<\omega}}$ is of type $c_0\ell_1$ over $Q$;
\item[(ii)] the set $Q$, endowed with the topology induced by $\tau$, admits a countably separating family.
\end{itemize}
\end{lem}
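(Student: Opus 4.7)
The plan is to use Proposition~\ref{thm:propBicountable} as the bridge between (i) and (ii) and translate its condition~(b) into the topological requirement of a countably separating family. First I would verify that the hypotheses of Proposition~\ref{thm:propBicountable} apply to $(F_\epsilon)_{\epsilon\in R^{<\omega}}$ over $Q$: for each $t\in Q$ the set $\{\epsilon:F_\epsilon(t)\ne0\}=\{\epsilon\subset t\}$ is countable; and for any finite $S\subset Q$, choosing $N$ large enough to separate all elements of $S$ by their first $N$ coordinates gives $\sum_{t\in S}\vert F_\epsilon(t)\vert\le1$ for every $\epsilon$ with $\vert\epsilon\vert\ge N$, so $\sum_{t\in S}\vert F_\epsilon(t)\vert\le1\mod c_0(R^{<\omega})$. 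Condition~(b) of that proposition thus gives that (i) is equivalent to the existence of disjoint $A,B$ with $A\cup B=R^{<\omega}\times Q$, such that for each $t\in Q$ only finitely many initial segments $\epsilon$ of $t$ satisfy $(\epsilon,t)\in A$, and each $B_\epsilon=\{t:(\epsilon,t)\in B\}$ is countable.

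For the implication (i)$\Rightarrow$(ii), given such $A,B$, for each $t\in Q$ let $n_t$ be the smallest integer such that $(t\vert_m,t)\in B$ for all $m\ge n_t$ (this exists because cofinitely many initial segments of $t$ lie in $B$), and set $V_t=C_{t\vert_{n_t}}\cap Q$, which is a basic $\tau$-neighborhood of $t$ in $Q$. To check countable separation, take any $M\subset Q$ with $s\in V_t$ for all $s,t\in M$, i.e.\ $s\vert_{n_t}=t\vert_{n_t}$, and let $N=\sup_{t\in M}n_t$. If $N=n_{t^*}$ for some $t^*\in M$, then for every $s\in M$ we have $s\vert_N=t^*\vert_N$ and $N\ge n_s$, whence $(s\vert_N,s)=(t^*\vert_N,s)\in B$ and $M\subset B_{t^*\vert_N}$ is countable. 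If $N=\infty$, pick $t_k\in M$ with $n_{t_k}\to\infty$; every $s\in M$ agrees with each $t_k$ on its first $n_{t_k}$ coordinates, so two elements of $M$ must be equal, giving $\vert M\vert\le1$.

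For (ii)$\Rightarrow$(i), given a countably separating family $(V_t)_{t\in Q}$, I would first observe that shrinking each $V_t$ to a smaller neighborhood preserves countable separation, so we may assume $V_t=C_{\epsilon_t}\cap Q$ with $\epsilon_t=t\vert_{n_t}$ for some $n_t\in\omega$. Then set $B=\{(\epsilon,t):\epsilon\subset t\text{ and }\vert\epsilon\vert>n_t\}$ and $A$ its complement. For each $t$, the set $\{\epsilon\subset t:(\epsilon,t)\in A\}=\{t\vert_k:0\le k\le n_t\}$ is finite; and for each $\epsilon$, if $s,t\in B_\epsilon$ then $\epsilon_t=t\vert_{n_t}$ is a proper initial segment of $\epsilon$ and $\epsilon\subset s$, so $\epsilon_t\subset s$ and $s\in V_t$ (symmetrically $t\in V_s$), whence the countably separating property forces $B_\epsilon$ to be countable. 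Proposition~\ref{thm:propBicountable} then yields (i).

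The main obstacle is finding the right dictionary between the abstract $A$/$B$ partition from Proposition~\ref{thm:propBicountable} and the choice of a basic $\tau$-neighborhood $V_t=C_{t\vert_{n_t}}\cap Q$: the index $n_t$ naturally records where the ``tail'' of initial segments of $t$ enters $B$, and with this identification the countability of each $B_\epsilon$ is exactly the conclusion of the countably separating property applied to the clique $B_\epsilon$, while the finiteness of $\{\epsilon\subset t:(\epsilon,t)\in A\}$ for each $t$ corresponds to $n_t$ being finite. Once this correspondence is set up both implications reduce to a short case analysis.
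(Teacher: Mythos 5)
Your proof is correct. The backward implication (ii)$\Rightarrow$(i) is essentially the paper's argument: both shrink $V_t$ to a basic cylinder $C_{\epsilon(t)}\cap Q$ with $\epsilon(t)\subset t$, put $(\epsilon,t)$ into $B$ exactly when $\epsilon\subset t$ properly extends $\epsilon(t)$, observe that each $B_\epsilon$ is a clique for the separating family, and invoke the implication (b)$\Rightarrow$(a) of Proposition~\ref{thm:propBicountable}. The forward implication is where you genuinely diverge. The paper works directly with a decomposition $F_\epsilon(t)=a_{\epsilon,t}+b_{\epsilon,t}$ with $\sum_t\vert b_{\epsilon,t}\vert\le k$, chooses $\epsilon(t)\subset t$ properly containing every $\epsilon\subset t$ with $a_{\epsilon,t}\ge\tfrac12$, and then bounds any clique $M$ by $2k$ elements, since $b_{\epsilon,t}>\tfrac12$ for $\epsilon=\bigcup_{s\in M}\epsilon(s)$ and every $t\in M$. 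You instead route this direction through condition~(b) of Proposition~\ref{thm:propBicountable} as well, letting $n_t$ record where the tail of initial segments of $t$ enters $B$ and splitting into the cases $\sup_{t\in M}n_t<\infty$ (where the sup is attained at some $t^*$ and $M\subset B_{t^*\vert_N}$ is countable) and $\sup_{t\in M}n_t=\infty$ (where $M$ is a singleton); both cases check out. This makes the two directions pleasantly symmetric, at the cost of obtaining only countability of cliques rather than the paper's uniform finite bound --- which is all Definition~\ref{thm:defcountsep} requires anyway. Your direct verification of the hypotheses of Proposition~\ref{thm:propBicountable} (countability of $\{\epsilon:F_\epsilon(t)\ne0\}=\{t\vert_n:n\in\omega\}$ for $t\in R^\omega$ with $\max L\notin R$, and the bound $\sum_{t\in S}\vert F_\epsilon(t)\vert\le1\mod c_0(R^{<\omega})$ since at most finitely many $\epsilon$ are common initial segments of two distinct points of a finite $S$) replaces the paper's appeal to Proposition~\ref{thm:weakstarnullboundfinsums} and Lemma~\ref{thm:Fitnonzerocountable} and is equally valid.
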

\begin{proof}
Assume (i) and write $F_\epsilon(t)=a_{\epsilon,t}+b_{\epsilon,t}$ for all $\epsilon\in R^{<\omega}$, $t\in Q$, with $\lim_{\epsilon\in R^{<\omega}}a_{\epsilon,t}=0$ for all $t\in Q$ and $\sum_{t\in Q}\vert b_{\epsilon,t}\vert\le k$ for all $\epsilon\in R^{<\omega}$ and some non negative real number $k$. For each $t\in Q$, the set
\begin{equation}\label{eq:epsilonsubsett}
\big\{\epsilon\in R^{<\omega}:\text{$\epsilon\subset t$ and $a_{\epsilon,t}\ge\tfrac12$}\big\}
\end{equation}
is finite and therefore there exists $\epsilon(t)\in R^{<\omega}$ such that $\epsilon(t)\subset t$ and $\epsilon(t)$ properly contains every element of
\eqref{eq:epsilonsubsett}. For all $t\in Q$, the set $V_t=C_{\epsilon(t)}\cap Q$ is an open neighborhood of $t$ in $Q$ with respect to the topology induced by $\tau$. In order to prove that $(V_t)_{t\in Q}$ is a countably separating family, we will show that there exists an upper bound on the size of finite
subsets $M$ of $Q$ such that $t\in V_s$ for all $t,s\in M$. Let $M$ be such a finite subset of $Q$. Since every $t\in M$ is a common extension of all
elements of $\{\epsilon(s):s\in M\}$, we have that $\epsilon=\bigcup_{s\in M}\epsilon(s)$ is in $R^{<\omega}$ and that every $t\in M$ extends $\epsilon$.
Note that, for $t\in M$, we have $t\in C_\epsilon$ and $t\ne\max C_\epsilon$ since $t\in R^\omega$ and
$\max L\not\in R$; thus $F_\epsilon(t)=1$. Moreover, since $\epsilon\subset t$ properly contains every element of \eqref{eq:epsilonsubsett}, we have that
$a_{\epsilon,t}<\frac12$ and therefore $b_{\epsilon,t}>\frac12$. Hence from $\sum_{t\in Q}\vert b_{\epsilon,t}\vert\le k$ we obtain that $M$ has at most $2k$ elements. This concludes the proof of (ii). Now assume (ii) and let us prove (i) by using the equivalence between (a) and (b) of Proposition~\ref{thm:propBicountable} (which is applicable to weak*-null families due to Proposition~\ref{thm:weakstarnullboundfinsums} and Lemma~\ref{thm:Fitnonzerocountable}). Let $(V_t)_{t\in Q}$ be a countably separating family for $Q$ endowed with the topology induced by $\tau$ and for each $t\in Q$ pick $\epsilon(t)\in R^{<\omega}$ with $\epsilon(t)\subset t$ and $C_{\epsilon(t)}\cap Q\subset V_t$. Set
\[A=\big\{(\epsilon,t)\in R^{<\omega}\times Q:\text{$F_\epsilon(t)=0$ or $\epsilon\subset\epsilon(t)$}\big\}\]
and $B=(R^{<\omega}\times Q)\setminus A$. For each $t\in Q$ we have both $(\epsilon,t)\in A$ and $F_\epsilon(t)\ne0$ only for finitely many $\epsilon\in R^{<\omega}$ and therefore $\lim_{\epsilon\in R^{<\omega}}F_\epsilon(t)\chilow A(\epsilon,t)=0$. To conclude the proof, we fix $\epsilon\in R^{<\omega}$ and we show that the set
\[B_\epsilon=\big\{t\in Q:(\epsilon,t)\in B\big\}\]
is countable. Since $(V_t)_{t\in Q}$ is countably separating, it is sufficient to check that $t\in V_s$ for all $t,s\in B_\epsilon$. Note that
for all $t\in B_\epsilon$ we have $F_\epsilon(t)\ne0$ and $\epsilon\not\subset\epsilon(t)$, which implies that $\epsilon(t)\subset\epsilon\subset t$.
Hence for all $t,s\in B_\epsilon$ we have $\epsilon(s)\subset\epsilon\subset t$, so that $t\in C_{\epsilon(s)}\cap Q\subset V_s$ and we are done.
\end{proof}

Now we turn to the construction of $Q$.
\begin{lem}\label{thm:lemmapressing}
Let the set $\omega_1^\omega$ be endowed with the product topology with each factor $\omega_1$ endowed with the discrete topology. There exists a subset $Q$
of $\omega_1^\omega$ that does not admit a countably separating family and such that $Q\cap\alpha^\omega$ is countable for every $\alpha\in\omega_1$.
\end{lem}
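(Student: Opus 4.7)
The plan is to construct $Q$ via a stationary-set/pressing-down argument. For each countable limit ordinal $\beta<\omega_1$, fix once and for all a strictly increasing sequence $t_\beta\in\omega_1^\omega$ with $\sup_n t_\beta(n)=\beta$ (possible since every countable limit ordinal has cofinality $\omega$), and set
\[Q=\{t_\beta:\beta<\omega_1,\ \beta\text{ a limit ordinal}\}.\]
The indexing $\beta\mapsto t_\beta$ is injective, so $|Q|=\omega_1$.

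Verifying the second requirement is the easy half. If $t_\beta\in\alpha^\omega$ for some $\alpha<\omega_1$, then every coordinate of $t_\beta$ is strictly less than $\alpha$, and since $\beta=\sup_n t_\beta(n)$ we conclude $\beta\le\alpha$. Hence $Q\cap\alpha^\omega\subseteq\{t_\beta:\beta\le\alpha\}$, which is countable.

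The main work is showing $Q$ admits no countably separating family; this is where Fodor's pressing-down lemma enters. Suppose for contradiction that $(V_t)_{t\in Q}$ is countably separating. Since the topology on $\omega_1^\omega$ is the product of discrete topologies, for each $t\in Q$ there is some $n(t)\in\omega$ such that the cylinder neighborhood of $t$ determined by its first $n(t)$ coordinates is contained in $V_t$. The set $S=\{\beta:t_\beta\in Q\}$ of countable limit ordinals is a club in $\omega_1$, hence stationary. Since $n(t_\beta)\in\omega$, some stationary $S_0\subseteq S$ has $n(t_\beta)=n_0$ constant on $S_0$. Now iterate Fodor's lemma $n_0$ times: the map $\beta\mapsto t_\beta(0)$ is regressive on $S_0$ (because $t_\beta(0)<\beta$), so it is constant on a stationary $S_1\subseteq S_0$; then $\beta\mapsto t_\beta(1)$ is regressive on $S_1$, giving stationary $S_2\subseteq S_1$ on which it is constant; and so on. After $n_0$ steps we obtain a stationary $S_{n_0}\subseteq S_0$ and a fixed finite sequence $\epsilon_0=(\gamma_0,\dots,\gamma_{n_0-1})$ such that $t_\beta$ restricted to its first $n_0$ coordinates equals $\epsilon_0$ for every $\beta\in S_{n_0}$.

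Setting $M=\{t_\beta:\beta\in S_{n_0}\}$ produces an uncountable subset of $Q$ with the property that, for all $t,s\in M$, the sequence $t$ extends $s$ on its first $n(s)=n_0$ coordinates, hence $t$ belongs to the basic cylinder neighborhood of $s$ contained in $V_s$. Thus $t\in V_s$ for all $t,s\in M$, contradicting the countably separating property. The step I expect to be the main technical point is precisely this iterated application of Fodor's lemma, though the key observations that $S$ is stationary and that $n(t_\beta)$ can be stabilized to a constant value allow the iteration to terminate after finitely many steps; the remaining verifications are routine.
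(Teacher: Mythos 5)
Your proposal is correct and follows essentially the same route as the paper: the paper takes $t_\alpha$ to be an enumeration of $\alpha$ for each nonzero countable $\alpha$ (rather than an increasing cofinal sequence at limits), but both choices give the two properties actually used, namely $t_\beta(n)<\beta$ for regressivity and $t_\beta\in\alpha^\omega\Rightarrow\beta\le\alpha$ for the countability of $Q\cap\alpha^\omega$. The argument for nonexistence of a countably separating family --- stabilizing the cylinder length on a stationary set and then applying the Pressing Down Lemma finitely many times coordinate by coordinate --- is exactly the paper's proof.
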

\begin{proof}
For each nonzero countable ordinal $\alpha$, let $t_\alpha:\omega\to\omega_1$ be a map whose image is $\alpha$ and set $Q=\big\{t_\alpha:0<\alpha<\omega_1\big\}\subset\omega_1^\omega$. For every $\alpha\in\omega_1$, we have that $Q\cap\alpha^\omega=\big\{t_\beta:0<\beta\le\alpha\big\}$ is countable. Now assume by contradiction that $Q$ admits a countably separating family $(V_t)_{t\in Q}$. For each $t\in\omega_1^\omega$ and $n\in\omega$, we denote by $U_n(t)$ the open neighborhood of $t$ in $\omega_1^\omega$ given by $U_n(t)=\big\{s\in\omega_1^\omega:s\vert_n=t\vert_n\big\}$ and for each $\alpha\in\left]0,\omega_1\right[$ we pick $n(\alpha)\in\omega$ such that $U_{n(\alpha)}(t_\alpha)\cap Q$ is contained in $V_{t_\alpha}$. We will now obtain a contradiction by showing that there exists an uncountable subset $M$ of $\left]0,\omega_1\right[$ such that $t_\alpha\in U_{n(\beta)}(t_\beta)$, for all $\alpha,\beta\in M$. Note that since a countable union of non stationary sets is non stationary, there exists a stationary subset $S$ of $\left]0,\omega_1\right[$ such that $n(\alpha)=m$ for all $\alpha\in S$ and some $m\in\omega$. Moreover, for each $i<m$, the map $\psi_i:S\to\omega_1$ given by $\psi_i(\alpha)=t_\alpha(i)$ satisfies $\psi_i(\alpha)<\alpha$ for all $\alpha\in S$ and thus applying the Pressing Down Lemma $m$ times we get a stationary subset $M$ of $S$ such that all of the maps $\psi_i$, $i<m$, are constant on $M$. Clearly,
such $M$ satisfies the desired condition.
\end{proof}

\begin{teo}\label{thm:notsepdet}
The compact line $K=L^\omega$ endowed with the lexicographic order is not separably determined, for any uncountable compact line $L$.
\end{teo}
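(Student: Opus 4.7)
The plan is to show that $K=L^\omega$ violates Definition~\ref{thm:defseparablydetermined} by transferring the structure produced by Lemma~\ref{thm:lemmapressing} from $\omega_1^\omega$ into $K$ through a coordinatewise embedding; the three preceding lemmas will then immediately deliver both the separability condition and the failure of the $c_0\ell_1$ property.

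First I would fix an injection $\iota\colon\omega_1\hookrightarrow L\setminus\{\min L,\max L\}$, which exists because $L$ is uncountable, and set $R=\iota[\omega_1]\subset L\setminus\{\max L\}$. The coordinatewise map $\hat\iota\colon\omega_1^\omega\to R^\omega\subset K$, defined by $\hat\iota(s)(n)=\iota(s(n))$, is a homeomorphism onto its image when both sides carry the product topology with discrete factors (which on $R^\omega$ is exactly the topology induced by $\tau$). Taking $Q_0\subset\omega_1^\omega$ as provided by Lemma~\ref{thm:lemmapressing}, I would then set $Q=\hat\iota[Q_0]\subset R^\omega$. Because $\iota$ avoids both $\min L$ and $\max L$, the set $Q$ lies in $K\setminus(K^0\cup K^1)$, which is the hypothesis required to apply Lemma~\ref{thm:lemmaEomega}.

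The candidate family is the subfamily $(F_\epsilon)_{\epsilon\in R^{<\omega}}$ of the family $(F_\epsilon)_{\epsilon\in\mathcal E}$ already shown to be weak*-null in $\NBV(K)$; restricting the index set clearly preserves weak*-nullity. Two verifications then finish the argument. On the one hand, for every countable $E\subset L$ the set $\iota^{-1}(E)$ is a countable subset of $\omega_1$ and hence bounded by some $\alpha<\omega_1$, so
\[Q\cap E^\omega=\hat\iota\big[Q_0\cap(\iota^{-1}(E))^\omega\big]\subset\hat\iota\big[Q_0\cap\alpha^\omega\big]\]
is countable by Lemma~\ref{thm:lemmapressing}, and Lemma~\ref{thm:lemmaEomega} then gives that $Q\cap(H\setminus H^+)$ is countable for every closed separable $H\subset K$. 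On the other hand, any countably separating family for $Q$ with respect to the $\tau$-induced topology would pull back through the homeomorphism $\hat\iota$ to one for $Q_0$, contradicting Lemma~\ref{thm:lemmapressing}; by Lemma~\ref{thm:lemmaseparating} this forces $(F_\epsilon)_{\epsilon\in R^{<\omega}}$ not to be of type $c_0\ell_1$ over $Q$. These two facts together contradict Definition~\ref{thm:defseparablydetermined}.

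With the three lemmas already in place, there is no genuine obstacle in this final step; the only bookkeeping point needing attention is to arrange $R\subset L\setminus\{\min L,\max L\}$, so that $Q$ is disjoint from $K^0\cup K^1$ and both Lemmas~\ref{thm:lemmaEomega} and \ref{thm:lemmaseparating} become applicable. As a byproduct, the indexing set $R^{<\omega}$ has cardinality $\omega_1$, which matches the cardinality advertised in the introduction.
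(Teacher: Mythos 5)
Your proposal is correct and follows essentially the same route as the paper: choose $R\subset L\setminus\{\min L,\max L\}$ of cardinality $\omega_1$, transfer the set $Q$ of Lemma~\ref{thm:lemmapressing} into $R^\omega$, and combine Lemmas~\ref{thm:lemmaEomega} and \ref{thm:lemmaseparating} applied to the weak*-null family $(F_\epsilon)_{\epsilon\in R^{<\omega}}$. The only difference is that you make explicit the coordinatewise identification of $\omega_1^\omega$ with $R^\omega$, which the paper leaves implicit.
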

\begin{proof}
Let $R$ be a subset of $L\setminus\{\min L,\max L\}$ with cardinality $\omega_1$. By Lemma~\ref{thm:lemmapressing} there exists a subset $Q$ of $R^\omega$ that does not admit a countably separating family in the topology induced by $\tau$ and such that $Q\cap E^\omega$ is countable for every countable subset $E$ of $R$.
By Lemma~\ref{thm:lemmaEomega}, $Q\cap(H\setminus H^+)$ is countable for every closed separable subset $H$ of $K$ and yet, by Lemma~\ref{thm:lemmaseparating}, the weak*-null family $(F_\epsilon)_{\epsilon\in R^{<\omega}}$ is not of type $c_0\ell_1$ over $Q$.
\end{proof}

The proof of Theorem~\ref{thm:notsepdet} yields an example of a continuous increasing surjection which has the $c_0$-extension property, but not the $c_0(\omega_1)$-extension property.
\begin{teo}\label{thm:c0EPbutnotc0omega1EP}
If $L$ is an arbitrary uncountable compact line and $K=L^\omega$ is endowed with the lexicographic order then there exists a compact line $\widetilde K$ and a continuous increasing surjection $\phi:\widetilde K\to K$ that has the $c_0$-extension property but does not have the $c_0(\omega_1)$-extension property.
\end{teo}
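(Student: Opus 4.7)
The plan is to reuse the construction already carried out in the proof of Theorem~\ref{thm:notsepdet}. Let $R\subset L\setminus\{\min L,\max L\}$ be of cardinality $\omega_1$ and let $Q\subset R^\omega$ be the subset provided by Lemmas~\ref{thm:lemmapressing} and~\ref{thm:lemmaEomega}, so that $Q\cap(H\setminus H^+)$ is countable for every closed separable subset $H$ of $K$ while the weak*-null family $(F_\epsilon)_{\epsilon\in R^{<\omega}}$ in $\NBV(K)$ (of cardinality $\omega_1$) is not of type $c_0\ell_1$ over $Q$. Take $\widetilde K=\DA(K;Q)$ and let $\phi=\pi_1:\widetilde K\to K$ be the first projection. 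By Remark~\ref{thm:remgenDA}, $\widetilde K$ is a compact line, $\phi$ is a continuous increasing surjection, and $Q(\phi)=Q$.

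Failure of the $c_0(\omega_1)$-extension property is then immediate from Theorem~\ref{thm:c0ell1equivextend}: since $\vert R^{<\omega}\vert=\omega_1$, after any bijection $R^{<\omega}\simeq\omega_1$ the weak*-null family $(F_\epsilon)_{\epsilon\in R^{<\omega}}$ witnesses a weak*-null family in $\NBV(K)$ indexed by $\omega_1$ that is not of type $c_0\ell_1$ over $Q(\phi)$, so the associated bounded operator $C(K)\to c_0(\omega_1)$ does not extend through $\phi^*$.

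For the $c_0$-extension property, I would invoke the characterization \cite[Theorem~2.6]{CTlines} already cited in the text: $\phi$ has this property iff $A\cap Q(\phi)$ is countable for every separable $G_\delta$ subset $A$ of $K$ with $\Der(A,K)=A$. Given such an $A$, its closure $H$ in $K$ is a closed separable subset of $K$, and (as noted just before the theorem) $A\subset H\setminus H^+$; hence $A\cap Q\subset Q\cap(H\setminus H^+)$, which is countable by our choice of $Q$.

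The argument is essentially a bookkeeping exercise built on machinery already in place; the only point that requires a bit of care is making sure that the index set can be taken of size exactly $\omega_1$, which is why we fix $R$ of cardinality $\omega_1$ and use the family indexed by $R^{<\omega}$ (also of cardinality $\omega_1$) rather than the full family $(F_\epsilon)_{\epsilon\in\mathcal E}$.
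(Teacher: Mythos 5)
Your proposal is correct and follows essentially the same route as the paper: the same $R$, $Q$, $\widetilde K=\DA(K;Q)$ and $\phi=\pi_1$, with non-extendibility of $(F_\epsilon)_{\epsilon\in R^{<\omega}}$ via Theorem~\ref{thm:c0ell1equivextend} and the $c_0$-extension property via \cite[Theorem~2.6]{CTlines}. The only cosmetic difference is that for the latter you verify the $G_\delta$/$\Der(A,K)=A$ formulation using condition (c) of Lemma~\ref{thm:lemmaEomega}, whereas the paper invokes condition (b) (countability of $Q\cap H$ for every separable $H$) directly; both are equivalent by that same lemma.
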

\begin{proof}
Pick $R$ and $Q$ as in the proof of Theorem~\ref{thm:notsepdet}. Set $\widetilde K=\DA(K;Q)$ and let $\phi:\widetilde K\to K$ denote the first projection, so that $\phi$ is a continuous increasing surjection and $Q(\phi)=Q$ (Remark~\ref{thm:remgenDA}). The weak*-null family $(F_\epsilon)_{\epsilon\in R^{<\omega}}$ is not of type $c_0\ell_1$ over $Q(\phi)$ and thus by Theorem~\ref{thm:c0ell1equivextend} it does not extend through $\phi$. Since the cardinality of $R^{<\omega}$ is $\omega_1$, we have that $\phi$ does not have the $c_0(\omega_1)$-extension property. On the other hand, by Lemma~\ref{thm:lemmaEomega}, $Q(\phi)\cap H=Q\cap H$ is countable for every separable subset $H$ of $K$ and hence by \cite[Theorem~2.6]{CTlines} we have that $\phi$ has the $c_0$-extension property.
\end{proof}

\begin{rem}\label{thm:remsmallercardinal}
Theorem~\ref{thm:c0EPbutnotc0omega1EP} gives an example of a closed subspace of a $C(K)$ space which has the $c_0$-extension property but not the $c_0(I)$-extension property
for a set $I$ of cardinality $\omega_1$. Another example of this kind can be obtained from a result in \cite{ArgyrosLondon}, though for a set $I$ of much larger cardinality.
More specifically, in \cite[Theorem~1.1~(b)]{ArgyrosLondon} the authors present an example of an Eberlein compact space $K$ and an isomorphic copy $Y$ of $c_0(I)$ in $C(K)$ which is not complemented. By \cite[Proposition~2.2~(b)]{CT}, $Y$ has the $c_0$-extension property in $C(K)$, yet $Y$ is not complemented and therefore does not have the $c_0(I)$-extension property in $C(K)$. The cardinality of $I$ in this example is quite large, namely $\beth_\omega$. Recall that $\beth_\omega=\sup_{n\in\omega}\beth_n$, where $\beth_0=\omega$ and $\beth_{n+1}=2^{\beth_n}$ for all $n\in\omega$.
\end{rem}

\subsection{Countably separating families and families of type $\mathbf{c_0\boldsymbol\ell_1}$}\label{sub:countsepc0ell1}

We show now that the relation that appeared in Lemma~\ref{thm:lemmaseparating} between spaces admitting countably separating families and weak*-null families of type $c_0\ell_1$ is connected to a deeper phenomenon. In this subsection $K$ denotes again an arbitrary compact line.

\begin{teo}\label{thm:queriasaberavolta}
Let $K$ be a compact line and $Q$ be a subset of $K$. If $Q$ endowed with the topology induced by $K$ admits a countably separating family then every weak*-null
family $(F_i)_{i\in I}$ in $\NBV(K)$ is of type $c_0\ell_1$ over $Q$.
\end{teo}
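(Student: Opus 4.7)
The plan is to apply Proposition~\ref{thm:propBicountable} via the equivalence between its items (a) and (b). Its hypotheses hold for the weak*-null family $(F_i)_{i\in I}$: applying Proposition~\ref{thm:weakstarnullboundfinsums} with $J=K$ gives $\sum_{t\in S}\vert F_i(t)\vert\le\tfrac12\Vert\mu_i\Vert\mod c_0(I)$ for every finite $S\subset Q$, where $\mu_i\in M(K)$ is the measure associated to $F_i$, and $(\tfrac12\Vert\mu_i\Vert)_{i\in I}$ is bounded since weak*-null families in $\NBV(K)$ are norm-bounded; moreover Lemma~\ref{thm:Fitnonzerocountable} gives that $\{i\in I:F_i(t)\ne0\}$ is countable for every $t\in Q$. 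It therefore suffices to produce a partition $I\times Q=A\cup B$ such that $\lim_{i\in I}F_i(t)\chilow A(i,t)=0$ for every $t\in Q$ and $B_i:=\{t\in Q:(i,t)\in B\}$ is countable for every $i\in I$.

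For the construction I would first, for each $t\in Q$, refine $V_t$ by choosing a convex open $U_t\subset K$ with $t\in U_t$ and $U_t\cap Q\subset V_t$; this is possible because convex open sets form a basis for the order topology of $K$, and the refined family $(U_t\cap Q)_{t\in Q}$ is still countably separating. For each $t\in Q$, pick a continuous $f_t\colon K\to[0,1]$ with $f_t=1$ on $[\min K,t]\setminus U_t$ and $f_t=0$ on $\left]t,\max K\right]\setminus U_t$, so that $\chilow{[\min K,t]}-f_t$ is supported on $U_t$, and set
\[a_{i,t}=\int_K f_t\,\dd\mu_i\qquad\text{and}\qquad b_{i,t}=F_i(t)-a_{i,t}=\int_K\big(\chilow{[\min K,t]}-f_t\big)\,\dd\mu_i.\]
Weak*-nullity gives $\lim_{i\in I}a_{i,t}=0$ for every $t\in Q$, and the support property gives $\vert b_{i,t}\vert\le\vert\mu_i\vert(U_t)$. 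The natural candidate is then $B=\{(i,t):b_{i,t}\ne0\}$ and $A$ its complement; the condition $\lim_{i\in I}F_i(t)\chilow A(i,t)=0$ holds because $F_i(t)\chilow A(i,t)=a_{i,t}\chilow A(i,t)\to 0$.

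The main obstacle — and the place where the countably separating hypothesis must do its work — is proving that $B_i$ is countable for every $i\in I$. The plan is to emulate the clique argument from the (ii)$\Rightarrow$(i) direction of Lemma~\ref{thm:lemmaseparating}: after possibly modifying the partition $(A,B)$, demonstrate that any $t,s\in B_i$ satisfy $t\in V_s$, so that $B_i$ is a clique for $(V_t)_{t\in Q}$ and therefore countable by hypothesis. In the highly structured family $(F_\epsilon)_{\epsilon\in R^{<\omega}}$ of Lemma~\ref{thm:lemmaseparating}, the label $\epsilon(t)$ serves simultaneously as an initial segment of $t$ and as a neighborhood witness, which makes the clique inclusion immediate; for a general weak*-null family this inclusion is not automatic from $b_{i,t}\ne0$, and additional care is needed, likely by refining $U_t$ using regularity of $\mu_i$ to capture its bulk mass on $U_t$ within an enclosing subinterval that provides the required witness, while treating the countably many atoms of $\mu_i$ separately. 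Once countability of each $B_i$ is established, Proposition~\ref{thm:propBicountable} delivers the conclusion that $(F_i)_{i\in I}$ is of type $c_0\ell_1$ over $Q$.
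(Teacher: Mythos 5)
Your reduction to item~(b) of Proposition~\ref{thm:propBicountable} is exactly the paper's starting point, and the verification of that proposition's hypotheses via Proposition~\ref{thm:weakstarnullboundfinsums} and Lemma~\ref{thm:Fitnonzerocountable} is correct. But the heart of the theorem --- producing a set $B$ with countable sections $B_i$ --- is not actually carried out, and your candidate does not work. With $b_{i,t}=\int_K(\chilow{[\min K,t]}-f_t)\,\dd\mu_i$ and $B=\{(i,t):b_{i,t}\ne0\}$, there is no reason for $B_i$ to be countable: take $\mu_i$ diffuse (say Lebesgue measure on $K=[0,1]$ and $Q=K$); then $b_{i,t}\ne0$ for essentially every $t$, so $B_i$ is uncountable. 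The clique argument cannot rescue this, because $b_{i,t}\ne0$ and $b_{i,s}\ne0$ only say that $\mu_i$ has some mass inside $U_t$ and inside $U_s$; two such points $t,s$ can be far apart, so nothing forces $t\in V_s$. You flag this obstacle yourself, but the proposed remedy (``refining $U_t$ using regularity of $\mu_i$ \dots while treating the countably many atoms separately'') is a hope rather than an argument, and it is not clear it can be made to work: the decomposition of $F_i(t)$ must be chosen \emph{uniformly} in $i$ for the limit condition, while the refinement you describe depends on $i$.

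The paper's proof uses a genuinely different mechanism that you would need to discover. For each $i$ and each level $n$, Lemma~\ref{thm:convexcomponents} covers the level set $\{t:F_i(t)\ge\frac1n\}$ by a \emph{finite} collection $\mathcal J_i(n,{+})$ of pairwise disjoint convex sets contained in $\{t:F_i(t)\ge\frac1{2n}\}$ (and similarly for $-F_i$). One puts $(i,t)$ into $B$ precisely when the convex piece $J$ containing $t$ satisfies $J\subset V_t$. Countability of $B_i$ is then automatic: for a fixed piece $J$, the set $\{t\in Q:t\in J\subset V_t\}$ is a clique for $(V_t)_{t\in Q}$ (any two of its points lie in $J$, hence in each other's $V$), so it is countable, and there are only finitely many pieces per $(i,n)$. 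The complementary condition $\lim_{i\in I}F_i(t)\chilow A(i,t)=0$ is where the real work is: if $\vert F_i(t)\vert\ge\frac1n$ but the piece $J_i\ni t$ is \emph{not} contained in $V_t$, then by convexity $J_i$ must contain a nondegenerate interval $[t,s]$ with $s$ independent of $i$, on which $\vert F_i\vert\ge\frac1{2n}$; Lemma~\ref{thm:FigeepsilonJinfinite} then shows only finitely many $i$ can do this. Your sketch contains neither the passage to convex components of level sets nor the finiteness lemma that controls the $A$-part, so the proof is essentially missing its two key ingredients.
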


The following is an immediate consequence of Theorem~\ref{thm:queriasaberavolta}.
\begin{cor}\label{thm:corconjecture}
Let $K$ be a compact line, $Q$ be a subset of $K$ and $(F_i)_{i\in I}$ be a weak*-null family in $\NBV(K)$. If the set $\big\{t\in Q:\big(F_i(t)\big)_{i\in I}\not\in c_0(I)\big\}$ endowed with the topology induced by $K$ admits a countably separating family then $(F_i)_{i\in I}$ is of type $c_0\ell_1$ over $Q$.\qed
\end{cor}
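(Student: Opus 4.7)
The plan is to verify condition (b) of Proposition~\ref{thm:propBicountable}, which applies to any weak*-null family in $\NBV(K)$ thanks to Proposition~\ref{thm:weakstarnullboundfinsums} and Lemma~\ref{thm:Fitnonzerocountable}; that is, I must produce a disjoint decomposition $I\times Q=A\cup B$ with $\lim_{i\in I}F_i(t)\chilow{A}(i,t)=0$ for every $t\in Q$ and with $B_i=\{t\in Q:(i,t)\in B\}$ countable for each $i\in I$. Since each $V_t$ is a neighborhood of $t$ in the topology that $Q$ inherits from $K$, and the order topology on $K$ has open intervals as a base, for every $t\in Q$ I would first fix an open interval $J_t=\left]l_t,r_t\right[$ of $K$ with $t\in J_t$ and $J_t\cap Q\subseteq V_t$.

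Modelling the (ii)$\Rightarrow$(i) direction of Lemma~\ref{thm:lemmaseparating}, I would then take
\[B_i=\{t\in Q:F_i(t)\neq0\text{ and }\{u\in K:F_i(u)\neq0\}\subseteq J_t\}.\]
The countability of each $B_i$ is then immediate from the countably separating hypothesis: for any $s,t\in B_i$, the condition $F_i(t)\neq0$ puts $t$ in the support of $F_i$, which by the defining condition for $s\in B_i$ is contained in $J_s$; hence $t\in J_s\cap Q\subseteq V_s$, so $B_i$ is a $V$-clique and therefore countable.

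The heart of the proof, and the main obstacle I anticipate, is the $A$-condition: for every fixed $t\in Q$ and $\varepsilon>0$, only finitely many $i\in I$ should satisfy both $|F_i(t)|\ge\varepsilon$ and $\{u\in K:F_i(u)\neq0\}\not\subseteq J_t$. In the characteristic-function setting of Lemma~\ref{thm:lemmaseparating} the analogous step is easy because the supports $C_\epsilon$ form a chain indexed by the initial segments of $t$, but in general the support of $F_i$ and the interval $J_t$ need not be comparable by inclusion, so I have to handle the case when these are incomparable. My plan is to first use Lemma~\ref{thm:Fitnonzerocountable} at the two endpoints $l_t$ and $r_t$ to discard the countably many indices $i$ with $F_i(l_t)\neq0$ or $F_i(r_t)\neq0$, and then to show that, for the remaining $i$, a nonzero value of $F_i$ at a point outside $J_t$ forces cancelling mass of $\mu_i$ straddling $l_t$ or $r_t$; combined with the $c_0(I)$-mod bounds $|F_i(t)|\le|\mu_i|\!\left(\left]l_t,t\right]\right)$ and $|F_i(t)|\le|\mu_i|\!\left(\left]t,r_t\right[\right)$ of Corollary~\ref{thm:corextremities} and the uniform total-variation bound coming from weak*-nullity, this extra cancellation should exceed the allowed variation for all but finitely many $i$ with $|F_i(t)|\ge\varepsilon$. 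Formalizing this measure-theoretic bookkeeping in the incomparable-support case is where I expect the real work to lie.
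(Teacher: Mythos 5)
There is a genuine gap in the $A$-condition, and it is exactly where you anticipated the ``real work'' would lie; unfortunately the repair you sketch does not work. Your set $B$ puts $(i,t)$ into $B$ only when the \emph{entire} support $\{u\in K:F_i(u)\ne0\}$ is contained in $J_t$, so the $A$-condition requires that for fixed $t$ and $\varepsilon>0$ only finitely many $i$ satisfy $\vert F_i(t)\vert\ge\varepsilon$ while $F_i$ is nonzero somewhere outside $J_t$. This fails: take $K=[0,1]$, $I=\omega$ and $\mu_n=\delta_{1/2}-\delta_{1/2+1/n}+\tfrac1n\big(\delta_0-\delta_{1/10}\big)$, so that $F_n=\chilow{[1/2,\,1/2+1/n[}+\tfrac1n\chilow{[0,\,1/10[}$ is a bounded weak*-null family. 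For $t=1/2$ and any interval $J_t\subset\left]1/4,3/4\right[$ you get $F_n(t)=1$ and $\operatorname{supp}F_n\not\subseteq J_t$ for \emph{every} $n$, so $F_n(t)\chilow A(n,t)=1$ does not tend to $0$. The cancelling mass you hope to exploit (here $\tfrac1n(\delta_0-\delta_{1/10})$) can be made arbitrarily small while keeping $F_i$ nonzero far from $t$, so no variation estimate of the kind you describe can rule these indices out; nor does discarding countably many indices per point $t$ help, since the $A$-condition demands finiteness and shunting those pairs into $B$ can make some $B_i$ uncountable when $Q$ is uncountable.

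The fix in the paper is to replace the support by a \emph{level set}: by bounded variation (Lemma~\ref{thm:convexcomponents}) the set $\{u:F_i(u)\ge\tfrac1n\}$ is covered by finitely many disjoint convex sets contained in $\{u:F_i(u)\ge\tfrac1{2n}\}$, and $(i,t)$ is put into $B$ when $t$ lies in one of these convex pieces that is contained in $V_t$. Small nonzero blips of $F_i$ far from $t$ are then irrelevant, and if the convex piece through $t$ reaches outside $V_t$ then $\vert F_i\vert\ge\tfrac1{2n}$ on a whole nondegenerate interval $[t,s]$ with $s$ \emph{fixed} (depending only on $t$), which by Lemma~\ref{thm:FigeepsilonJinfinite} (via the mod-$c_0(I)$ sum bounds of Proposition~\ref{thm:weakstarnullboundfinsums}) can happen for only finitely many $i$. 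Your countability argument for $B_i$ survives essentially unchanged in this setting, since the finitely many convex pieces play the role of your single support. Two smaller points: the hypothesis of the corollary gives a countably separating family only on $Q'=\big\{t\in Q:\big(F_i(t)\big)_{i\in I}\not\in c_0(I)\big\}$, not on all of $Q$, so you should first reduce to $Q'$ (this is immediate since any family is trivially of type $c_0\ell_1$ over $Q\setminus Q'$ and the relevant collection of sets is an ideal); and indeed the paper proves the general statement as Theorem~\ref{thm:queriasaberavolta} and derives the corollary from it in exactly this one-line way.
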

We do not know if the converse of Corollary~\ref{thm:corconjecture} holds. A proof of such converse would yield a nice topological characterization of weak*-null families
of type $c_0\ell_1$ and hence (by Theorem~\ref{thm:c0ell1equivextend}) of $c_0(I)$-valued bounded operators defined on the subalgebra induced by a continuous increasing surjection $\phi:K\to L$ that admit a bounded extension to $C(K)$. This might also lead to interesting characterizations of continuous increasing surjections
with the $c_0(I)$-extension property, generalizing the main result of \cite{CTlines}.

\smallskip

For the proof of Theorem~\ref{thm:queriasaberavolta} we need two lemmas.

\begin{lem}\label{thm:FigeepsilonJinfinite}
Let $K$ be a compact line, $(F_i)_{i\in I}$ be a weak*-null family in $\NBV(K)$ and $J$ be a subset of $K$ with more than one point. If $J$ is either infinite or convex then for every $\varepsilon>0$ the set
\begin{equation}\label{eq:FitgeepsilonJinfinite}
\big\{i\in I:\text{$\vert F_i(t)\vert\ge\varepsilon$ for all $t\in J$}\big\}
\end{equation}
is finite.
\end{lem}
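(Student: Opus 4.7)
The plan is to split the argument according to which of the two hypotheses on $J$ is in force. The infinite case will be an application of Proposition~\ref{thm:weakstarnullboundfinsums} with $J$ replaced by the whole space $K$ (which is convex and open), while the finite-but-convex case will reduce to the fact that $\lim_{i\in I}F_i(t)=0$ whenever $t\in K^+$.

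For the infinite case I would set $M=\sup_{i\in I}\Vert\mu_i\Vert$, which is finite since a weak*-null family is bounded, and pick an integer $n$ with $n\varepsilon>M/2$. Choosing $n$ distinct points $t_1,\ldots,t_n$ of $J$ (possible since $J$ is infinite) and applying Proposition~\ref{thm:weakstarnullboundfinsums} to the convex open set $K$ and the finite subset $S=\{t_1,\ldots,t_n\}\subset K$, one obtains
\[\sum_{j=1}^n\vert F_i(t_j)\vert\le\tfrac12\vert\mu_i\vert(K)\mod c_0(I).\]
If $i$ belongs to the set \eqref{eq:FitgeepsilonJinfinite}, then the left-hand side is at least $n\varepsilon$, while $\tfrac12\vert\mu_i\vert(K)\le M/2$. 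Thus, setting $\delta=n\varepsilon-M/2>0$, every such $i$ lies in the set $\bigl\{i\in I:\sum_{j=1}^n\vert F_i(t_j)\vert\ge\tfrac12\vert\mu_i\vert(K)+\delta\bigr\}$, which is finite by the very definition of the $\mod c_0(I)$ relation.

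For the convex case, if $J$ is infinite then the previous paragraph applies. Otherwise $J$ is finite, say $J=\{t_1,\ldots,t_n\}$ with $t_1<\cdots<t_n$ and $n\ge2$. Convexity forces $[t_1,t_2]\subseteq J$, and since $J$ is finite this gives $\left]t_1,t_2\right[=\emptyset$; hence $t_1$ is right-isolated in $K$. By the observation recorded after Proposition~\ref{thm:propNBVK}, $\lim_{i\in I}F_i(t_1)=0$, so the set $\{i\in I:\vert F_i(t_1)\vert\ge\varepsilon\}$, which contains \eqref{eq:FitgeepsilonJinfinite}, is finite.

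I do not anticipate a serious obstacle: the argument is essentially a direct combination of Proposition~\ref{thm:weakstarnullboundfinsums} with the pointwise vanishing at right-isolated points. The only subtle point is realizing that the uniform bound $\sum_{t\in S}\vert F_i(t)\vert\le\tfrac12\vert\mu_i\vert(K)\mod c_0(I)$ can be combined with the arbitrariness of $\#S$ (made possible by $J$ being infinite) to force \eqref{eq:FitgeepsilonJinfinite} to be finite, and conversely recognizing that when $J$ is finite this growth-in-$n$ trick is unavailable and the convex hypothesis must be used through right-isolation instead.
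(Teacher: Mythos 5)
Your proof is correct and follows essentially the same route as the paper: for infinite $J$ you apply Proposition~\ref{thm:weakstarnullboundfinsums} to a finite subset of $J$ with enough points to force a contradiction with the uniform bound $\tfrac12\Vert\mu_i\Vert$, and for finite convex $J$ you observe that $\min J$ is right-isolated so that $\lim_{i\in I}F_i(\min J)=0$. No gaps; this matches the paper's argument.
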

\begin{proof}
By Proposition~\ref{thm:weakstarnullboundfinsums}, setting $k=\sup_{i\in I}\Vert F_i\Vert_{\BV}$, we have
\begin{equation}\label{eq:Swithnelements}
\sum_{t\in S}\vert F_i(t)\vert\le\frac k2\mod c_0(I),
\end{equation}
for every finite subset $S$ of $K$. If $J$ is infinite, pick a subset $S$ of $J$ with $n>\frac k{2\varepsilon}$ elements and note that \eqref{eq:FitgeepsilonJinfinite}
is contained in $\big\{i\in I:\sum_{t\in S}\vert F_i(t)\vert\ge n\varepsilon\big\}$, which is a finite set by \eqref{eq:Swithnelements}. Now, if $J$ is convex and finite then $t=\min J$ is right-isolated and the finiteness of \eqref{eq:FitgeepsilonJinfinite} follows from $\lim_{i\in I}F_i(t)=0$.
\end{proof}

In the proof of next lemma we will use the fact that if $A$ is an arbitrary subset of a compact line $K$ then the maximal convex subsets of $K$ contained in $A$, known as the {\em convex components\/} of $A$ in $K$, constitute a partition of the set $A$.
\begin{lem}\label{thm:convexcomponents}
Let $K$ be a compact line and $F:K\to\R$ be a map of bounded variation. Given $\varepsilon_1>0$ and $\varepsilon_2>0$ with $\varepsilon_2<\varepsilon_1$, we have that the
set
\begin{equation}\label{eq:geepsilon1}
\big\{t\in K:F(t)\ge\varepsilon_1\big\}
\end{equation}
can be covered by finitely many disjoint convex subsets of $K$ contained in the set
\begin{equation}\label{eq:geepsilon2}
\big\{t\in K:F(t)\ge\varepsilon_2\big\}.
\end{equation}
\end{lem}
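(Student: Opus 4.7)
The plan is to work with the sets $A_1=\{t\in K:F(t)\ge\varepsilon_1\}$ and $A_2=\{t\in K:F(t)\ge\varepsilon_2\}$, noting $A_1\subseteq A_2$ because $\varepsilon_2<\varepsilon_1$. I would take the candidate covering to be simply the collection $\mathcal J$ of convex components of $A_2$ in $K$ that meet $A_1$. Since the convex components of $A_2$ form a partition of $A_2$ (as noted in the paragraph preceding the lemma), the elements of $\mathcal J$ are automatically pairwise disjoint convex subsets of $K$ contained in $A_2$, and they cover $A_1$. So the whole content of the lemma reduces to showing $\mathcal J$ is finite.

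For finiteness, I would argue by contradiction and use the bounded variation of $F$. Suppose $\mathcal J$ contains $n$ distinct convex components $J_1,\dots,J_n$; since these are pairwise disjoint convex subsets of $K$, they are linearly ordered, and we may label them so that $J_1<J_2<\dots<J_n$. Pick $s_i\in J_i\cap A_1$ for each $i$, so that $F(s_i)\ge\varepsilon_1$ and $s_1<s_2<\cdots<s_n$. The key geometric point is that for consecutive components $J_i<J_{i+1}$ the interval $[s_i,s_{i+1}]$ cannot be entirely contained in $A_2$, for otherwise $J_i\cup[s_i,s_{i+1}]\cup J_{i+1}$ would be a convex subset of $A_2$ strictly larger than $J_i$, contradicting maximality. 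So we may pick $u_i\in[s_i,s_{i+1}]$ with $F(u_i)<\varepsilon_2$, and necessarily $s_i<u_i<s_{i+1}$.

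Now the points $s_1<u_1<s_2<u_2<\cdots<u_{n-1}<s_n$ form an increasing sequence in $K$, and on each consecutive pair the map $F$ jumps by more than $\varepsilon_1-\varepsilon_2>0$, giving
\[
V(F)\ge\sum_{i=1}^{n-1}\bigl(|F(s_i)-F(u_i)|+|F(u_i)-F(s_{i+1})|\bigr)>2(n-1)(\varepsilon_1-\varepsilon_2).
\]
Hence $n<1+\frac{V(F)}{2(\varepsilon_1-\varepsilon_2)}$, which bounds the cardinality of $\mathcal J$ by a constant depending only on $F$, $\varepsilon_1$, $\varepsilon_2$.

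The only subtlety I expect is the labeling step: that finitely many pairwise disjoint nonempty convex subsets of a linearly ordered set can be linearly ordered so that each lies entirely below the next. This is routine but deserves a one-line remark in the write-up. Everything else is purely arithmetic with the total variation.
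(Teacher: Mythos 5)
Your proposal is correct and follows essentially the same route as the paper: cover \eqref{eq:geepsilon1} by the convex components of \eqref{eq:geepsilon2} that meet it, and bound their number by interleaving points where $F<\varepsilon_2$ between consecutive components to extract total variation at least $2(n-1)(\varepsilon_1-\varepsilon_2)$. The only cosmetic difference is that you spell out the maximality argument for finding the intermediate points, which the paper leaves implicit.
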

\begin{proof}
It is sufficient to show that the number of convex components of \eqref{eq:geepsilon2} in $K$ that intersect \eqref{eq:geepsilon1} is finite. To this aim, let
$(C_i)_{i=1}^n$ be distinct convex components of \eqref{eq:geepsilon2} that intersect \eqref{eq:geepsilon1} and let us find an upper bound for $n$.
For each $i=1,\ldots,n$, pick $t_i\in C_i$ with $F(t_i)\ge\varepsilon_1$; by reordering the indexes, we can assume that $t_1<t_2<\cdots<t_n$.
For each $i=1,\ldots,n-1$, as $t_i$ and $t_{i+1}$ belong to distinct convex components of \eqref{eq:geepsilon2}, we can find $s_i\in\left]t_i,t_{i+1}\right[$
with $F(s_i)<\varepsilon_2$. Now $\vert F(t_i)-F(s_i)\vert+\vert F(s_i)-F(t_{i+1})\vert>2(\varepsilon_1-\varepsilon_2)$ and thus setting
$P=\{t_1,s_1,t_2,s_2,\ldots,t_{n-1},s_{n-1},t_n\}$ we obtain
\[2(n-1)(\varepsilon_1-\varepsilon_2)\le V(F;P)\le V(F)\]
so that $n\le1+\frac{V(F)}{2(\varepsilon_1-\varepsilon_2)}$.
\end{proof}

\begin{proof}[Proof of Theorem~\ref{thm:queriasaberavolta}]
Let $(V_t\cap Q)_{t\in Q}$ be a countably separating family for $Q$, with $V_t$ a neighborhood of $t$ in $K$ for each $t\in Q$.
Our strategy is to construct subsets $A$ and $B$ of $I\times Q$ as in item~(b) of Proposition~\ref{thm:propBicountable} (which is applicable to weak*-null families due to Proposition~\ref{thm:weakstarnullboundfinsums} and Lemma~\ref{thm:Fitnonzerocountable}). For each positive integer $n$ and each $i\in I$, Lemma~\ref{thm:convexcomponents} applied to $F_i$ and to $-F_i$ yields finite collections $\mathcal J_i(n,{+})$ and $\mathcal J_i(n,{-})$ of pairwise disjoint convex subsets of $K$ such that:
\begin{gather}
\label{eq:calJinplus}\big\{t\in K:F_i(t)\ge\tfrac1n\big\}\subset\bigcup\mathcal J_i(n,{+})\subset\big\{t\in K:F_i(t)\ge\tfrac1{2n}\big\},\\
\big\{t\in K:F_i(t)\le-\tfrac1n\big\}\subset\bigcup\mathcal J_i(n,{-})\subset\big\{t\in K:F_i(t)\le-\tfrac1{2n}\big\}.
\end{gather}
Now for each positive integer $n$ we take $B(n,{+})$ (resp., $B(n,{-})$) to be the set of pairs $(i,t)\in I\times Q$ such that $t\in J$ and $J\subset V_t$ for some
$J\in\mathcal J_i(n,{+})$ (resp., for some $J\in\mathcal J_i(n,{-})$). We claim that for $i\in I$, the set
\[\big\{t\in Q:(i,t)\in B(n,{+})\big\}\]
is countable. Namely, such set is equal to the union of the sets
\begin{equation}\label{eq:tQtJVt}
\big\{t\in Q:t\in J\subset V_t\big\}
\end{equation}
with $J$ ranging over the finite set $\mathcal J_i(n,{+})$; moreover, for each $J\in\mathcal J_i(n,{+})$ the set \eqref{eq:tQtJVt} is countable since $t\in V_s\cap Q$ for all $t$ and $s$ in \eqref{eq:tQtJVt}. Similarly, one shows that the set
\[\big\{t\in Q:(i,t)\in B(n,{-})\big\}\]
is countable, for all $i\in I$. Thus, if we define
\[B=\bigcup_{n=1}^\infty\big(B(n,{+})\cup B(n,{-})\big)\]
we obtain that $B_i=\big\{t\in Q:(i,t)\in B\big\}$ is countable for all $i\in I$. To conclude the proof we now set $A=(I\times Q)\setminus B$ and we show that
$\lim_{i\in I}F_i(t)\chilow A(i,t)=0$ for all $t\in Q$. To this aim it is sufficient to fix $t\in Q$ and a positive integer $n$ and to prove that the sets
\begin{gather}
\label{eq:finitAnplus}\big\{i\in I:\text{$F_i(t)\ge\tfrac1n$ and $(i,t)\not\in B(n,{+})$}\big\},\\
\label{eq:finitAnminus}\big\{i\in I:\text{$F_i(t)\le-\tfrac1n$ and $(i,t)\not\in B(n,{-})$}\big\}
\end{gather}
are finite. In order to establish the finiteness of \eqref{eq:finitAnplus}, for each $i\in I$ with $F_i(t)\ge\frac1n$ pick $J_i\in\mathcal J_i(n,{+})$ such that
$t\in J_i$; note that if $(i,t)\not\in B(n,{+})$ then $J_i$ cannot be contained in $V_t$. Therefore the set \eqref{eq:finitAnplus} is contained in the union of the sets:
\begin{gather}
\label{eq:JitmaxK}\big\{i\in I:\text{$F_i(t)\ge\tfrac1n$ and $J_i$ intersects $\left]t,\max K\right]\setminus V_t$}\big\},\\
\label{eq:JiminKt}\big\{i\in I:\text{$F_i(t)\ge\tfrac1n$ and $J_i$ intersects $\left[\min K,t\right[\setminus V_t$}\big\}.
\end{gather}
If $t=\max K$, the set \eqref{eq:JitmaxK} is empty. Otherwise, there exists $s\in K$ with $s>t$ and $\left[t,s\right[\subset V_t$. Thus,
for every $i$ in \eqref{eq:JitmaxK}, the convex set $J_i$ intersects $[s,\max K]$ and so it contains $[t,s]$. From \eqref{eq:calJinplus} we now conclude that \eqref{eq:JitmaxK}
is contained in the set
\[\big\{i\in I:\text{$F_i(u)\ge\tfrac1{2n}$, for all $u\in [t,s]$}\big\}\]
which is finite by Lemma~\ref{thm:FigeepsilonJinfinite}. This proves that \eqref{eq:JitmaxK} is finite and a similar argument shows that \eqref{eq:JiminKt} is finite,
hence establishing that \eqref{eq:finitAnplus} is finite. Again, a similar argument shows that \eqref{eq:finitAnminus} is finite and we are done.
\end{proof}

\end{section}

\noindent\textbf{Acknowledgments.}\enspace The authors wish to thank Claudia Correa for valuable discussions during the preparation of this work. The first author was funded by CNPq (Conselho Nacional de Desenvolvimento Cient\'\i fico e Tecnol\'ogico, process number 141881/2017-8).

\end{document}